\newcommand{\po}{\left(}
\newcommand{\pf}{\right)}
\newcommand{\co}{\left[}
\newcommand{\cf}{\right]}
\newcommand{\cco}{\llbracket}
\newcommand{\ccf}{\rrbracket}
\newcommand{\R}{\mathbb R}
\newcommand{\N}{\mathbb N} 
\newcommand{\dd}{\mathrm{d}}
\newcommand{\na}{\nabla}
\newcommand{\hil}{\mathbb{H}}
\newcommand{\oeta}{\overline{\eta}}
\newcommand{\new}[1]{#1}
\newcommand{\rev}[1]{#1}
\newtheorem{thm}{Theorem}
\newtheorem{assu}{Assumption}
\newtheorem*{assu*}{Assumption}
\newtheorem{lem}[thm]{Lemma}
\newtheorem{cor}[thm]{Corollary}
\newtheorem{prop}[thm]{Proposition}
\newtheorem{rem}{Remark}
\newtheorem{Example}{Example}
\title{Local  convergence rates for Wasserstein gradient flows and McKean-Vlasov equations with multiple stationary solutions}
\author[1]{Pierre Monmarché}
\author[2]{Julien Reygner}
\affil[1]{LJLL and LCT, Sorbonne Université, Paris, France}
\affil[2]{CERMICS, Ecole des Ponts, Marne-la-Vallée, France}
\begin{document}

\maketitle

\begin{abstract}
Non-linear versions of log-Sobolev inequalities, that link a free energy to its dissipation along the corresponding Wasserstein gradient flow (i.e. corresponds to Polyak-Lojasiewicz inequalities in this context), are known to provide global exponential long-time convergence to the free energy minimizers, and have been shown to hold in various contexts. However they cannot hold when the free energy admits critical points which are not global minimizers, which is for instance the case of the granular media equation in a double-well potential with quadratic attractive interaction at low temperature. This work addresses such cases, extending the general arguments when a log-Sobolev inequality only holds locally and, as an example, establishing such local inequalities for the granular media equation with quadratic interaction either in the one-dimensional symmetric double-well case or in higher dimension in the low temperature regime. The method provides quantitative convergence rates for initial conditions in a Wasserstein ball around the stationary solutions. The same analysis is carried out for the kinetic counterpart of the gradient flow, i.e. the corresponding Vlasov-Fokker-Planck equation. The local exponential convergence to stationary solutions for the mean-field equations, both elliptic and kinetic, is shown to induce for the corresponding particle systems a fast (i.e. uniform in the number \rev{of} particles) decay of the particle system free energy toward the level of the non-linear limit.
\end{abstract}




\section{Introduction}\label{sec:intro}

\subsection{\rev{Motivation}}

We are interested in the long-time behavior of the granular media equation
\begin{equation}
\label{eq:granular_media}
\partial_t \rho_t = \na\cdot \po \sigma^2\nabla \rho_t + \po \na V + \rho_t \star \na W\pf \rho_t\pf
\end{equation}
or of more general McKean-Vlasov semilinear equations (see \eqref{eq:granular_media_2} below). Here, $\rho_t$ is a probability density over $\R^d$, $V\in\mathcal C^2(\R^d,\R)$, $W\in\mathcal C^2(\R^d\times\R^d,\R)$ are respectively a confining and interaction potential, $W(x,y)=W(y,x)$ for all $x,y\in\R^d$, $\sigma^2>0$ stands for the temperature (or diffusivity) and $\rho \star \na W(x) = \int_{\R^d} \na_x W(x,y)\rho(y)\dd y$. We have typically in mind the double-well case where $d=1$,
\begin{equation}
\label{eq:double_well}
V(x) = \frac{x^4}{4} - \frac{x^2}{2}\,,\qquad W(x,y) = \theta  (x-y)^2
\end{equation}
for some $\theta\in\R$. When $\theta\leqslant 0$ (repulsive interaction), or when $\theta> 0$ and $\sigma^2/\theta$ is large enough (attractive interaction at high temperature or small interaction), there is a unique stationary solution to \eqref{eq:granular_media}, which is globally attractive with exponential rate. However for a fixed $\theta>0$ there is a phase transition at some critical temperature $\sigma_c^2>0$ such that, for $\sigma^2 <\sigma_c^2$, \eqref{eq:granular_media} with \eqref{eq:double_well} admits three distinct stationary solutions (one symmetric, unstable, and two non-symmetric, stable) \cite{Tugautdoublewell}. We are interested in this second case, and more precisely on obtaining \emph{local} convergence rates, namely to prove that solutions which start close to a stable stationary solution converge exponentially fast to it. Among other motivations, this is an important question both for the theoretical understanding of metastable interacting particle systems, where the stability property of the non-linear limit drives the short-time behavior and induces the metastable behavior \cite{Pavliotis,carrillo2020long,BBM,dawson1986large,gvalani2020barriers,bashiri2021metastability}, and for practical questions of optimization in the Wasserstein space, for instance in the mean-field modelling of artificial neural networks \cite{chizat,Szpruch,mei2018mean,mei2019mean} (the loss function being convex only in toy models, like one-layer networks\rev{, cf.~\eqref{eq:neurones}).}

\medskip

To illustrate the point, let us state a result obtained with our approach in the specific case of the symmetric double well \eqref{eq:double_well}. In the next statement (proven in Section~\ref{sec:doublepuit1D}, see Remarks~\ref{rem:decomposition} and \ref{rem:double-puit-degenere}) as in the rest of the work we write $\mathcal W_2$ and $\|\cdot\|_{TV}$ respectively the $L2$ Wasserstein distance and total variation norm and $\mathcal P_2(\R^d)$ the set of probability measures on $\R^d$ with finite second moment.

\begin{prop}\label{prop:double-well}
Consider the granular media \eqref{eq:granular_media} in the case \eqref{eq:double_well} with $\theta>0$.
\begin{enumerate}
    \item If $\sigma^2 < \sigma_c^2$, let $\rho_*$ be one of the two non-symmetric stationary solutions. There exist $\delta,\lambda,C>0$ such that for any initial condition $\rho_0\in\mathcal P_2(\R)$ with $\mathcal W_2(\rho_0,\rho_*) \leqslant \delta$, the corresponding solution to \eqref{eq:granular_media} satisfies, for all $t\geqslant 0$
\[\mathcal W_2(\rho_t,\rho_*) + \|\rho_t - \rho_*\|_{TV} \leqslant C e^{-\lambda t}\,.\]
 \item If $\sigma^2 = \sigma_c^2$, denote by $\rho_*$ the unique stationary solution. For any $\rho_0\in\mathcal P_2(\R)$, there exists $C_0>0$ such that the corresponding solution to \eqref{eq:granular_media} satisfies, for all $t\geqslant 0$,  
\[\mathcal W_2(\rho_t,\rho_*) + \|\rho_t - \rho_*\|_{TV} \leqslant \frac{C_0}{t^{1/3}}\,.\]
\end{enumerate}

\end{prop}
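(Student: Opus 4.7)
Equation~\eqref{eq:granular_media} is the Wasserstein-2 gradient flow of the free energy
\[\mathcal F(\rho)=\sigma^2\int\rho\log\rho\,\dd x+\int V\,\dd\rho+\frac12\iint W\,\dd\rho\,\dd\rho,\]
with dissipation $\mathcal I(\rho)=\int|\na(\sigma^2\log\rho+V+W\star\rho)|^2\rho\,\dd x$ along the flow. Stationary solutions satisfy $\sigma^2\log\rho_*+V+W\star\rho_*\equiv\mathrm{const}$, and the identity
\[\mathcal F(\rho)-\mathcal F(\rho_*)=\sigma^2\,\mathrm{Ent}(\rho|\rho_*)+\tfrac12\iint W(x,y)(\rho-\rho_*)(x)(\rho-\rho_*)(y)\,\dd x\,\dd y\]
provides the Lyapunov functional. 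By the general machinery developed earlier in the paper, both claims reduce to (i) verifying that a small $\mathcal W_2$-ball around $\rho_*$ is absorbing for the flow, and (ii) proving on this ball a functional inequality controlling $\mathcal F-\mathcal F_*$ by $\mathcal I$: in the first part a genuine local non-linear log-Sobolev inequality $\mathcal F-\mathcal F_*\leqslant (2\lambda)^{-1}\mathcal I$ producing the exponential rate, in the second part a weaker Lojasiewicz-type inequality $\mathcal I\geqslant c(\mathcal F-\mathcal F_*)^\gamma$ with $\gamma>1$ producing polynomial decay. The $\mathcal W_2$ and $\|\cdot\|_{TV}$ bounds then follow by Talagrand and Csiszár–Kullback–Pinsker inequalities relative to $\rho_*$.

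\textbf{Subcritical case.} The starting point is the Hessian of $\mathcal F$ at $\rho_*$: for $h$ with $\int h\,\dd x=0$, using $W(x,y)=\theta(x-y)^2$ and the identity $\iint(x-y)^2 h(x)h(y)\,\dd x\,\dd y=-2(\int xh)^2$,
\[D^2\mathcal F(\rho_*)[h,h]=\sigma^2\int\frac{h^2}{\rho_*}\,\dd x-2\theta\left(\int xh\,\dd x\right)^2.\]
The dynamical stability of either non-symmetric $\rho_*$ in the regime $\sigma^2<\sigma_c^2$ translates, through the fixed-point characterization of the stationary solutions in \cite{Tugautdoublewell}, into strict positive-definiteness of this quadratic form on $\{h:\int h=0\}$. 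Combined with a standard LSI for the reference measure $\rho_*$ itself — which, being concentrated on a single well, is there a bounded perturbation of a strictly log-concave Gaussian, so that Holley–Stroock or Bakry–Émery applies — and with a perturbative handling of the non-local $W$-contribution on a small $\mathcal W_2$-ball, this produces the local non-linear LSI, whence the claimed local exponential convergence.

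\textbf{Critical case and obstacle.} At $\sigma^2=\sigma_c^2$ the Hessian above is only semidefinite: it has a one-dimensional kernel, generated by an antisymmetric mode $\phi$ (the bifurcation direction of \cite{Tugautdoublewell}). The $\Z_2$-symmetry of $(V,W)$ forces an even expansion along this mode,
\[\mathcal F(\rho_*+s\phi)-\mathcal F(\rho_*)=a_4 s^4+O(s^6),\qquad a_4>0,\]
while the Hessian stays strictly positive on $\phi^\perp$. From this quartic-in-one-direction / quadratic-in-the-rest geometry one derives a local Lojasiewicz inequality with exponent $\gamma>1$ calibrated by the quartic scaling, whose integration together with an adapted (degenerate) Talagrand/Pinsker-type bound along the slow mode yields the announced $t^{-1/3}$ rate in both $\mathcal W_2$ and $\|\cdot\|_{TV}$. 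The main technical obstacle throughout is propagating information from $\rho_*$ to the whole $\mathcal W_2$-ball: one needs to control how the log-Sobolev constant of the tilted measure $\propto\exp(-(V+W\star\rho)/\sigma^2)$ varies with $\rho$, to prove absorption of the ball by a continuity-in-time argument on $t\mapsto\mathcal W_2(\rho_t,\rho_*)$, and, in the critical case, to disentangle the slow antisymmetric mode from the fast transverse dynamics sharply enough to pin down the precise Lojasiewicz exponent matching the $t^{-1/3}$ decay.
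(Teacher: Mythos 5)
Your high-level framework is the right one and matches the paper: reduce to a local non-linear log-Sobolev inequality for the exponential rate and a degenerate (Lojasiewicz-type) variant for the algebraic rate, then close the loop with an absorption argument and Talagrand/Pinsker. Your Hessian computation $D^2\mathcal F(\rho_*)[h,h]=\sigma^2\int h^2/\rho_*-2\theta\left(\int xh\right)^2$ is correct, and its strict positive-definiteness on mean-zero perturbations is, by Cauchy--Schwarz, exactly the condition $\tfrac{2\theta}{\sigma^2}\mathrm{var}(\rho_*)<1$, which is precisely $f'(m_*)<1$ for the one-dimensional map $f(m)=\int x\,\rho_m\,\dd x$ used in the paper. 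So you have identified the same structural fact.

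The gap is in how you propose to go from this Hessian information to the actual functional inequality \eqref{eq:loc_LSInonlineaire} on a $\mathcal W_2$-ball. You write that a ``perturbative handling of the non-local $W$-contribution on a small $\mathcal W_2$-ball ... produces the local non-linear LSI,'' but this is exactly the technical crux of the proof and cannot be dispatched by a formal second-order expansion in infinite dimensions: one must compare $\mathcal F(\rho)-\mathcal F(\rho_*)$ with the $\rho$-dependent Fisher information $\mathcal I(\rho|\Gamma(\rho))$, not just a quadratic form at $\rho_*$, and one must control how $\Gamma(\rho)$ moves as $\rho$ varies. The paper's key device, which your proposal does not use, is that for quadratic interaction the nonlinearity is entirely parametrized by the scalar $m(\rho)=\int x\,\rho$: this collapses the infinite-dimensional problem to a one-dimensional fixed-point problem for $f$, and Proposition~\ref{prop:New_fixed_point} then converts the contraction $|f(m)-m_+|\leqslant\alpha|m-m_+|$ into \eqref{eq:loc_LSInonlineaire} by a clean chain of Lipschitz, Talagrand and sandwich estimates, valid on the large set $\{|m(\rho)|\geqslant\varepsilon\}$ and not merely a tiny ball. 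The same remark applies to the critical case: your quartic-in-$s$ expansion of $\mathcal F(\rho_*+s\phi)$ is the right intuition, but the paper makes it precise as $f'(0)=1$, $f''(0)=0$, $f^{(3)}(0)<0$, turning into the global bound $|m|\lesssim|m-f(m)|+|m-f(m)|^{1/3}$ of Proposition~\ref{prop:doublewelldegenerate} and then the degenerate LSI; your proposal leaves the Lojasiewicz exponent unspecified and does not trace how it yields the stated rate. Finally, you flag but do not resolve the absorption step, which in the paper requires the short-time regularization of Corollary~\ref{Cor:FW2}, a synchronous-coupling estimate, and the LaSalle invariance principle (the proof of Theorem~\ref{thm:main}); these are genuinely needed since $\mathcal F$ is not continuous for $\mathcal W_2$.
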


Notice that, in the sub-critical case, there is no assumption on $\sigma^2$ other than $\sigma^2 < \sigma_c^2$, namely, the result holds arbitrarily close to the critical temperature (our results also apply in the simpler super-critical regime and provide, as soon as $\sigma^2 > \sigma_c^2$, a global quantitative exponential convergence toward the unique stationary solution, see Remark~\ref{rem:supercritical}). This is in contrast to the results of \cite{tugaut2023steady}, also concerned with quantitative local convergence of \eqref{eq:granular_media}, which require the temperature to be sufficiently small (for a given $\theta$). Moreover, the Wasserstein convergence in \cite[Theorem 2.3]{tugaut2023steady} only gives a convergence speed of order $e^{-\lambda \sqrt{t}}$ for some $\lambda>0$, and additionally it requires $\theta > -\inf V''$, which our result does not (under this additional assumption we get that the rate $\lambda$ is uniform over $\sigma^2\in(0,\sigma_0^2]$ for any $\sigma_0^2\in(0,\sigma_c^2)$, as in Proposition~\ref{prop:multiwell_new}). The constants $\delta,\lambda,C$ in Proposition~\ref{prop:double-well} can be made explicit (see Remark~\ref{rem:explicit}), namely the result is quantitative.

\subsection{\rev{The strategy in finite dimension}}

Our method is the following. As recalled in the next section, it is now well-known that \eqref{eq:granular_media} can be seen as the gradient flow in the Wasserstein space of some functional. To fix ideas, consider on $\R^d$ such a gradient flow
\[\dot x_t = -\na f(x_t)\,.\]
Let $x_*$ be a local minimizer of $f$. Assuming a Polyak-Lojasiewicz inequality, namely that there exists $\eta>0$ such that
\begin{equation}\label{eq:LSIgradientflow}
0 \leqslant  f(x) - f(x_*) \leqslant \eta |\na f(x)|^2 
\end{equation} 
in a neighborhood $\mathcal A$ of $x_*$, then by differentiating $f(x_t)-f(x_*)$ we immediately get that, as long as $x_t $ stays within $\mathcal A$, 
  \begin{equation}
  \label{eq:decayFinidim}
  f(x_t)-f(x_*) \leqslant e^{-t/\eta}(f(x_0)-f(x_*))
    \end{equation}
  and
  \begin{equation}
      \label{eq:loca3}
\frac{\dd}{\dd t} \sqrt{f(x_t) - f(x_*)}  \leqslant -  \frac1{2\sqrt{\eta}}|\na f(x_t)|\,,
\end{equation}
  from which
  \begin{equation}
  \label{eq:transportFinidim}
|x_t-x_0| =  \left|\int_0^t \na f(x_s)\dd s \right| \leqslant 2\sqrt{\eta\po f(x_0) - f(x_*)\pf }  \,.
  \end{equation}
Using that $f$ is continuous so that the right hand side can be made arbitrarily small by taking $x_0$ sufficiently close to $x_*$, this shows that $\mathcal A$ contains a ball centered at $x_*$ such that, starting with an initial condition in this ball, the gradient flows remains in $\mathcal A$ and thus the previous inequalities hold for all times $t\geqslant 0$. Assuming furthermore that $x_*$ is the unique critical point of $f$ in $\mathcal A$, by the LaSalle invariance principle, $x_t$ converges to $x_*$ and then letting $t\rightarrow \infty$ in \eqref{eq:transportFinidim} and applying it with $x_0$ replaced by $x_t$ gives
\[|x_*-x_t|^2 \leqslant 4 \eta\po f(x_t) - f(x_*)\pf \leqslant 4 \eta e^{-t/\eta}\po f(x_0) - f(x_*)\pf  \,,\]
which proves the exponential convergence to $x_*$. 

\subsection{\rev{Extension to the Wasserstein space}}

We show that \rev{the previous} argument extends to the infinite dimensional settings of gradient flows over the Wasserstein space  (the lack of continuity of $f$ with respect to the Wasserstein distance in this case being circumvented by the regularization properties of~\eqref{eq:granular_media} which, with the present notations, amounts to say that $x_0 \mapsto f(x_t)$ is continuous at $x_*$ as soon as $t>0$). The key new ingredient is thus the (local) dissipation inequality~\eqref{eq:LSIgradientflow}, which for elliptic McKean-Vlasov equations as \eqref{eq:granular_media} corresponds to a (local) non-linear log-Sobolev inequality (LSI). Such inequalities  have been investigated in a number of works (see e.g. \cite{CMCV,Pavliotis,GuillinWuZhang} and references within) but, to our knowledge, only in cases where they are global, corresponding in the finite-dimensional settings above to the case where   \eqref{eq:LSIgradientflow} holds for all $x\in\R^d$ (for instance when $f$ is uniformly strongly convex). Our main contribution is thus to show that the method also works locally and, more importantly, to show that   it is indeed possible to establish such local dissipation inequalities in some cases where the global inequality fails. 

\rev{
As an illustration in a specific case, let us sum up the main steps of the proof of Proposition~\ref{prop:double-well}.
\begin{enumerate}
    \item The first step is to prove the analogue of~\eqref{eq:LSIgradientflow} in the present context. Since the dissipation of the free energy along the flow reads
    \[\frac{\dd }{\dd t } \mathcal F(\rho_t) = - \sigma^4 \mathcal I\po \rho_t|\Gamma(\rho_t)\pf  \]
    with $\mathcal I$ the Fisher information and $\Gamma(\rho)$ the local equilibrium (see \eqref{eq:Fisher} and \eqref{eq:Gamma}), the goal is to find a constant $\eta>0$ and a set of probability measures $\mathcal A$ such that 
    \begin{equation}
        \label{eq:introPL}
        \forall \rho \in \mathcal A,\qquad \mathcal F(\rho) - \inf_{\mathcal A}\mathcal F \leqslant \eta \mathcal I\po \rho|\Gamma(\rho)\pf\,.
    \end{equation}
    To do so, we notice that $\mathcal F$ can be decomposed as
    \[\mathcal F(\rho) = \mathcal H \po \rho |\Gamma(\rho)\pf + g\po m(\rho)\pf \]
    for some function $g:\R\mapsto\R$ with the mean $m(\rho) = \int_{\R}x \rho(x)\dd x $, where $\mathcal H$ stands for the relative entropy (cf. Section~\ref{sec:decomposition}). Classical arguments show that $\Gamma(\rho)$ satisfies a so-called log-Sobolev inequality, with a constant independent from $\rho$, which shows that $\mathcal H(\rho|\Gamma(\rho)) \leqslant \eta' \mathcal I(\rho|\Gamma(\rho))$ for all $\rho$ for some $\eta'>0$. Hence, it only remains to analyze a $1d$ function $g$. In the context of Proposition~\ref{prop:double-well}, we can actually bound
    \begin{multline*}
     \mathcal F(\rho) - \inf\mathcal F = \mathcal F(\rho) - \mathcal F(\rho_*) = \mathcal H(\rho|\Gamma(\rho) + g(m(\rho)) - g(m(\rho_*))\\ \leqslant \mathcal H(\rho|\Gamma(\rho)) + C |m(\rho) - m(\rho_*)|^2    
    \end{multline*}
    for some $C>0$. The main step is thus to analyze a $1d$ fixed-point problem associated to $g$ leading to a bound of the form
    \begin{equation}
        \label{ref:loceq}
        |m(\rho) - m(\rho_*)| \leqslant  C_\varepsilon |m(\rho) - m\po \Gamma(\rho)\pf |
    \end{equation}
    for some $C_\varepsilon>0$, for  all $\rho$ with $|m(\rho)|\geqslant \varepsilon>0$, for all $\varepsilon>0$ (see Proposition~\ref{prop:doublepuit}). Since $|m(\rho)-m(\Gamma(\rho))|\leqslant \mathcal W_2(\rho,\Gamma(\rho))| \leqslant 2\eta'\sqrt{ \mathcal I(\rho|\Gamma(\rho))}$ thanks to the log-Sobolev inequality satisfied by $\Gamma(\rho)$ (cf. the proof of Proposition~\ref{prop:New_fixed_point} for details), this concludes the proof of~\eqref{eq:introPL} over the set $\mathcal A=\{\rho \in \mathcal P_2(\R^d), |m(\rho)|\geqslant \varepsilon\}$, for an $\varepsilon>0$ smaller than $|m(\rho_*)|$. In the criticial case where $\sigma=\sigma_c$ , the fixed point problem associated to $g$ is degenerate and the  bound~\eqref{ref:loceq} has to be replaced by
    \begin{equation*}
        |m(\rho) - m(\rho_*)| \leqslant  C_\varepsilon \po |m(\rho) - m\po \Gamma(\rho)\pf | +  |m(\rho) - m\po \Gamma(\rho)\pf |^{1/3}\pf\,, 
    \end{equation*}
cf.    Proposition~\ref{prop:doublewelldegenerate}.
\item The Benamou-Brenier formulation of the Wasserstein distance gives the analogue of \eqref{eq:transportFinidim}, namely
\begin{equation}
    \label{eq:locW2}
\mathcal W_2^2(\rho_t,\rho_0) \leqslant  C \po \mathcal F(\rho_0) - \mathcal F(\rho_*)\pf 
\end{equation}
 for some $C>0$, as long as $\rho_t \in \mathcal A$ (cf. Lemma~\ref{lem:localLSI_consequence}).
\item Thanks to the regularization property of the elliptic PDE~\eqref{eq:granular_media}, a Wang-Harnak inequality holds : for some $C>0$,
\[\mathcal F(\rho_1) - \mathcal F(\rho_*) \leqslant C \mathcal W_2^2 (\rho_*,\rho_0)\,, \]
cf. Corollary~\ref{Cor:FW2}. Combining this with~\eqref{eq:locW2} and a simple stability bound $\mathcal W_2(\rho_t,\rho_*) \leqslant C \mathcal W_2(\rho_0,\rho_*)$ for some $C>0$ for $t\in[0,1]$ gives
\[\mathcal W_2^2 (\rho_t,\rho_0) \leqslant C '\mathcal W_2^2 (\rho_*,\rho_0)\,, \]
for some $C'>0$, as long as as $\rho_t\in\mathcal A$. This allows to conclude as in the finite dimensional case with~\eqref{eq:transportFinidim}, despite $\mathcal F$ not being continuous with respect to $\mathcal W_2$. In particular, since $\mathcal A$ contains a Wasserstein ball centered on $\rho_*$, reasoning as in finite dimension, we get that there exists a smaller ball from which solutions started there remain in $\mathcal A$ for all times. Conclusion follows as in the finite dimensional case.
\end{enumerate}
}

\subsection{\rev{Main contributions and organization}}

\rev{To conclude this introduction, let us describe the main results of this work}.
\begin{itemize}
\item In the general framework of gradient flows with respect to $\mathcal W_2$, under suitable regularity conditions, the local non-linear LSI that is the analogue of \eqref{eq:LSIgradientflow} is shown to imply the exponential convergence (in $\mathcal W_2$ and relative entropy) towards the local minimizer for all initial conditions in  a suitable $\mathcal W_2$ ball (this is Theorem~\ref{thm:main}).
\item The same result is established in the kinetic case, i.e. for the Vlasov-Fokker-Planck equation (this is Theorem~\ref{thm:mainVFP}).
\item In the particular case of the granular media equation, when the interaction  is parametrized by some moments of the measure, a simple criterion for the local non-linear LSI is given in Proposition~\ref{prop:New_fixed_point}. It is then illustrated in two cases with quadratic interactions, the one-dimensional double-well case (Proposition~\ref{prop:doublepuit}) and the multi-well case in $\R^d$ (Proposition~\ref{prop:multiwell_new}).
\item Under the same conditions as Theorem~\ref{thm:main} (or Theorem~\ref{thm:mainVFP} in the kinetic case), the free energy of the corresponding system of interacting particles is shown to decay fast below the level of the limit of the non-linear limit, as stated in Proposition \ref{prop:particules_overdamped} (or Proposition~\ref{prop:particules_kinetic} in the kinetic case).
\end{itemize}

This work is organized as follows. The general framework is introduced in Section~\ref{sec:general}, where the main general result (Theorem~\ref{thm:main}) is stated and proven. Section~\ref{sec:LSIparametric} addresses the question of establishing local non-Linear LSI, covering the granular media case (with a detailed study of the one-dimensional double-well potential). The Vlasov-Fokker-Planck equation is studied in Section~\ref{sec:VFP}, and Section~\ref{sec:particules} is devoted to interacting particle systems.

To conclude this introduction, let us mention some perspectives of this work. The gradient descent structure underlying our study is quite flexible, as one may restrict the space over which the free energy is minimized (e.g. tensorized distributions, Gaussian distributions, distributions with some fixed marginals\dots), which is of interest for variational inference \cite{lambert2022variational,Lacker1,Lacker2}, and one can modify the metric with respect to which the gradient is taken, which amounts to add some non-constant (possibly non-linear) diffusion matrix, allowing e.g. for slow or fast diffusion processes and congestion effects \cite{dolbeault2009new,carrillo2022primal}. We expect most  of our analysis to extend to this kind of settings. Non-asymptotic bounds for discrete-time numerical schemes can also   be obtained as in \cite{idealized,Camrud}. Extension to time-varying temperature (for annealing or as a surrogate to stochastic gradient descent \cite{shi2023learning}) is straightforward, following e.g. \cite{journel2022convergence,monmarche2018hypocoercivity} and references within.

Last, let us mention that for McKean-Vlasov equations which do not necessarily write as Wasserstein gradient flows, but under different assumptions than ours, a similar statement to Proposition~\ref{prop:double-well} was recently obtained in~\cite{cormier2022stability}. It provides exponential convergence when the initial condition is in a small $\mathcal{W}_1$ neighborhood of a stationary solution. The method is completely different and relies on the differentiation, in the sense of Lions derivatives, of the drift of the underlying nonlinear SDE in the neighborhood of the considered invariant measure, which then provides a criterion for the stability of the invariant measure. \rev{In the spirit of earlier works such as~\cite{Tamura}, it doesn't give very explicit convergence rates or stability radius.}

\section{Local convergence rates with log-Sobolev inequalities}\label{sec:general}

The main result of this section is Theorem~\ref{thm:main},  stated and proven in Section~\ref{sec:mainthm}. Before that, the relevant  notions, conditions  and lemmas are gradually introduced.

\subsection{General settings, assumptions and notations}\label{sec:settings}

We consider an energy functional $\mathcal E:\mathcal P_2(\R^d) \rightarrow (-\infty,\infty]$ and, for a temperature $\sigma^2>0$, the free energy
\[\mathcal F(\rho) = \mathcal E(\rho) + \sigma^2 \mathcal H(\rho)\]
for $\rho\in\mathcal P_2(\R^d)$, where $\mathcal H(\rho)  = \int_{\R^d} \rho \ln \rho$ stands for the entropy (taken as $+\infty$ if $\rho$ is not absolutely continuous; otherwise we   also write $\rho$ its density). 

\begin{assu}[boundedness from below of the free energy]\label{assu:F}
  The free energy $\mathcal F$ is bounded from below.
\end{assu}

Given a functional $\mathcal G:\mathcal P_2(\R^d) \rightarrow (-\infty,\infty]$, a measurable function $\frac{\delta\mathcal{G}}{\delta\mu}:\mathcal P_2(\R^d)\times\R^d \rightarrow \R$ is called a linear functional derivative of $\mathcal{G}$ if, for all  $\mu_1,\mu_0 \in \mathcal P_2(\R^d)$ with $\mathcal{G}(\mu_0)+\mathcal{G}(\mu_1)<\infty$,
\begin{equation}
    \label{eq:defFunctionDerivative}
\mathcal G(\mu_1) - \mathcal G(\mu_0) = \int_0^1 \int_{\R^d} \frac{\delta  \mathcal G(\mu_t)}{\delta \mu}(x) (\mu_1-\mu_0)(\dd x)\dd t\,,
\end{equation}
where $\mu_t = t\mu_1 +(1-t) \mu_0$. 

\begin{assu}[linear functional derivative of the energy]\label{assu:lfdE}
  $\mathcal E$ admits a linear functional derivative that we denote  $E_{\rho}(x) = \frac{\delta  \mathcal E(\rho)}{\delta \mu}(x)$. Moreover,
  \begin{itemize}
      \item[(i)] the function $(x,\rho) \in \R^d \times \mathcal{P}_2(\R^d) \mapsto E_\rho(x)$ is continuous, where $\mathcal{P}_2(\R^d)$ is endowed with the $\mathcal{W}_2$ distance;
      \item[(ii)] for any $\rho \in \mathcal{P}_2(\R^d)$, $E_\rho \in \mathcal{C}^2(\R^d)$;
      \item[(iii)] for all $\mu_0, \mu_1 \in \mathcal{P}_2(\R^d)$ with $\mathcal{E}(\mu_0) + \mathcal{E}(\mu_1) < \infty$, $x \mapsto \sup_{t \in [0,1]} |E_{\mu_t}(x)|$ is in $L^1(\mu_0)\cap L^1(\mu_1)$.
  \end{itemize}
\end{assu}
The last item of Assumption~\ref{assu:lfdE} ensures that the integral in the right hand side of~\eqref{eq:defFunctionDerivative} is well defined when $\mathcal G=\mathcal E$, and moreover that
\begin{equation}\label{eq:cv-lfdE}
  \lim_{t \to 0} \frac{\mathcal{E}(\mu_0 + t(\mu_1-\mu_0))-\mathcal{E}(\mu_0)}{t} = \int_{\R^d} E_{\mu_0}(x)(\mu_1-\mu_0)(\dd x).
\end{equation}

We are interested in the McKean-Vlasov equation
\begin{equation}
\label{eq:granular_media_1}
\partial_t \rho_t = \sigma^2\Delta  \rho_t +   \na\cdot \po \rho_t\na E_{\rho_t}\pf\,,
\end{equation}
equivalently 
\begin{equation}
\label{eq:granular_media_2}
\partial_t \rho_t =  \na\cdot \po \rho_t \na  \frac{\delta\mathcal{F}(\rho_t) }{\delta\mu} \pf\,.
\end{equation}
In particular, the granular media equation  \eqref{eq:granular_media} corresponds to the free energy
\begin{equation}
\label{eq:def_F}
\mathcal{F}(\rho) =  \sigma^2 \int_{\R^d} \rho \ln \rho + \int_{\R^d} V\rho + \frac12 \int_{\R^d\times\R^d}   W \rho^{\otimes 2}\,,
\end{equation} 
for which $E_{\rho}(x) = V(x) + \rho \star W(x)$ and Assumptions~\ref{assu:F} and~\ref{assu:lfdE} are satisfied as soon as, for instance, $V$ is convex outside a compact set and $W$ is lower bounded with at most a quadratic growth at infinity (other conditions can be considered, for instance in repulsive cases $W$ may not be bounded below but Assumption~\ref{assu:F} still holds if $V$ grows faster than $-W$ at infinity)   Under suitable regularity  condition\rev{s}, a straightforward computation gives
\begin{equation}\label{eq:dissipationF}
\frac{\dd}{\dd t} \mathcal F( \rho_t)  = -  \int_{\R^d} \left|\na \frac{\delta\mathcal{F}(\rho_t)}{\delta\mu}\right|^2 \dd \rho_t  \,,
\end{equation}
\rev{see e.g. \cite[Proposition 2.1]{CMCV}.} In particular, the free energy is decreasing along time, and the stationary solutions $\rho_*$ are the critical points of $\mathcal F$, characterized by the fact that  $\frac{\delta\mathcal{F}}{\delta\mu}(\rho_*)$ is constant, which, using that  $\frac{\delta\mathcal{H}}{\delta\mu}(\rho) = \ln \rho$,  is equivalent to the self-consistency equation
\begin{equation}
\label{eq:self_consistency}
\rho_* \propto \exp\po - \frac{1}{\sigma^{2}} E_{\rho_*}\pf\,.
\end{equation}
In cases where $\mathcal F$ satisfies a global non-linear log-Sobolev inequality, in the sense that there exists a constant $\oeta>0$ such that 
\begin{equation}
\label{eq:LSInonlineaire}
\forall \rho\in\mathcal P_2(\R^d)\,,\qquad \mathcal F( \rho) - \inf \mathcal F  \leqslant \oeta   \int_{\R^d} \left|\na \frac{\delta\mathcal{F}(\rho)}{\delta\mu}\right|^2 \dd \rho \,,
\tag{\textbf{G-NL-LSI}}
\end{equation}
 we immediately get from \eqref{eq:dissipationF} that  the free energy decays exponentially fast toward its infimum. In particular such an inequality is clearly false when $\mathcal F$ admits critical points which are not global minimizers, which is precisely the case we are interested in.
 
 \begin{assu}[local equilibrium]\label{assu:loc-eq}
    For any $\rho \in \mathcal{P}_2(\R^d)$, 
    \begin{equation*}
        Z_\rho = \int_{\R^d} \exp\po - \frac1{\sigma^{2}}E_\rho \pf < \infty.
    \end{equation*}
\end{assu}
For the granular media equation~\eqref{eq:granular_media} with free energy given by~\eqref{eq:def_F}, this assumption holds  if for instance $W$ is lower bounded and $\ln|x| = o(V(x))$ at infinity.  Under Assumption~\ref{assu:loc-eq}, denoting
 \begin{equation}
 \label{eq:Gamma}
 \Gamma(\rho) = Z_\rho^{-1} \exp\po - \frac1{\sigma^{2}}E_\rho \pf\,,
 \end{equation}
which we call the local equilibrium as it is the stationary solution of the linear equation
\[\partial_t \tilde \rho_t = \sigma^2 \Delta \tilde \rho_t + \na\cdot \po \tilde \rho_t \na  E_\rho   \pf \,, \]
we can also interpret the free energy dissipation in \eqref{eq:dissipationF} as 
\[\int_{\R^d} \left|\na \frac{\delta\mathcal{F}}{\delta\mu}(\rho)\right|^2 \dd \rho = \sigma^4 \mathcal I\po \rho|\Gamma(\rho)\pf\,,\]
where $\mathcal I(\nu|\mu)$ stands for the Fisher information of $\nu$ with respect to $\mu$,
\begin{equation}
\label{eq:Fisher} 
    \mathcal I(\nu|\mu) = \int_{\R^d} \left|\na \ln \frac{\dd \nu}{\dd \mu}\right|^2 \dd \nu\,.
\end{equation}

 We write
\[\mathcal K = \{\rho_* \in\mathcal P_2(\R^d),\ \mathcal F(\rho_*)<\infty,\  \rho_* = \Gamma(\rho_*)\}\]
the set of critical points of $\mathcal F$.

 In this work we are interested in cases where the local equilibria satisfy a uniform (classical) log-Sobolev inequality, in the sense that there exist $\eta >0$ such that 
\begin{equation}
\label{eq:LSIlineaire_locale}
\forall \nu,\rho\in\mathcal P_2(\R^d)\,,\qquad \mathcal H\po \nu|\Gamma(\rho)\pf  \leqslant \eta  \mathcal I\po \nu|\Gamma(\rho)\pf \,,
\tag{\textbf{U-LSI}}
\end{equation}
where
\[\mathcal H(\nu|\mu) = \int_{\R^d} \ln \frac{\dd\nu}{\dd \mu} \dd \nu\]
stands for the relative entropy of $\nu$ with respect to $\mu$. Indeed, contrary to \eqref{eq:LSInonlineaire}, there are many tools to establish \eqref{eq:LSIlineaire_locale} and it can typically hold even if $\mathcal F$ has several critical points. For instance, for the granular media equation \eqref{eq:granular_media} in the double well case \eqref{eq:double_well} with attractive interaction (namely $\theta>0$), we can decompose $V=V_0+V_1$ where $V_0$ is strongly convex and $V_1$ is bounded, so that,  for any $\rho$,
\[V  + \rho \star W   = V_0 + V_1 + \rho\star W\]
is the sum of a strongly convex potential $V_0+\rho\star W$ (with a lower bound on the curvature independent from $\rho$) and a bounded potential $V_1$ (independent from $\rho$), so that \eqref{eq:LSIlineaire_locale} follows from classical Bakry-Emery and Holley-Stroock results \cite{BakryGentilLedoux}. One motivation of the present work is to understand the difference between the uniform classical log-Sobolev inequality \eqref{eq:LSIlineaire_locale} and the  non-linear log-Sobolev inequality \eqref{eq:LSInonlineaire}.

\medskip

 The well-posedness for \eqref{eq:granular_media_2} for specific cases is a standard question that we do not address here, we refer the interested reader to \cite{ambrosio2005gradient} for general considerations on Wasserstein gradient flows.

 \begin{assu}[well-posedness and regularity of~\eqref{eq:granular_media_2}]\label{assu:pde}
 For all $\rho_0\in\mathcal P_2(\R^d)$, \eqref{eq:granular_media_2} has a unique strong solution, continuous in time for $\mathcal W_2$, which is the gradient flow of $\mathcal F$ in the sense of \cite[Definition 11.1.1]{ambrosio2005gradient}; for all $t>0$, $\rho_t$ has a continuous positive density, $\mathcal F(\rho_t)$ and $\mathcal H(\rho_t|\Gamma(\rho_t))$ are finite; for almost all times $t>0$, $\mathcal I(\rho_t|\Gamma(\rho_t))$ is finite and \eqref{eq:dissipationF} holds. 
 \end{assu}
 
The fact that the free energy, local relative entropy and Fisher information become finite instantaneously will be stated in more quantitative ways along the study. In particular, the results stated in Section~\ref{sec:auxiliary}, combined with the next statement   from~\cite{ambrosio2005gradient}, show  that Assumption~\ref{assu:pde} is met in particular in the granular media case~\eqref{eq:granular_media} in the settings of Proposition~\ref{prop:double-well}.

\begin{prop}
\label{prop:Ambrosio}
In the case~\eqref{eq:def_F} with $\nabla ^2 V \geq \lambda I_d$ for some $\lambda \in \R$, $W(x,y) = w(x-y)$ with an even and convex function $w : \R^d \to [0,\infty)$ which has the doubling property $w(x+y) \leq c_w(1+w(x)+w(y))$, the following holds. For any initial condition $\rho_0 \in \mathcal{P}_2(\R^d)$, there is a unique distributional solution to~\eqref{eq:granular_media}, which is the gradient flow of $\mathcal F$ and  possesses the following properties:
\begin{enumerate}[label=(\roman*)]
    \item for any $t > 0$, $\rho_t$ has a density with respect to the Lebesgue measure on $\R^d$;
    \item $\rho_t \to \rho_0$ when $t \to 0$ in $\mathcal{P}_2(\R^d)$;
    \item $\rho_\cdot \in L^1_\mathrm{loc}((0,\infty),W^{1,1}_\mathrm{loc}(\R^d))$; \item for any $0 < s < t < \infty$,
    \begin{equation*}
        \mathcal F(\rho_s) = \mathcal F(\rho_t) + \sigma^4 \int_s^t \mathcal{I}(\rho_r|\Gamma(\rho_r)) \dd r < \infty.
    \end{equation*}
\end{enumerate}
\end{prop}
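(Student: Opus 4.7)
The plan is to verify that the free energy~\eqref{eq:def_F} falls within the class of functionals for which the Ambrosio-Gigli-Savaré theory in~\cite{ambrosio2005gradient} delivers existence, uniqueness and the listed regularity of the Wasserstein gradient flow. The key observation is that $\mathcal{F}$ decomposes into three terms, each of which is known to be geodesically convex on $\mathcal{P}_2(\R^d)$ under the stated hypotheses: the entropy $\sigma^2 \mathcal{H}$ is $0$-displacement convex by McCann's theorem; the potential energy $\int V \dd\rho$ is $\lambda$-displacement convex because $\nabla^2 V \geq \lambda I_d$; and the interaction energy $\frac{1}{2}\iint w(x-y)\rho(\dd x)\rho(\dd y)$ is $0$-displacement convex because $w$ is convex and even, which is McCann's criterion for interaction functionals as recalled in~\cite[Ch.~9]{ambrosio2005gradient}. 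Adding these gives $\lambda$-geodesic convexity of $\mathcal{F}$.

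First I would check that $\mathcal{F}$ is proper and lower semicontinuous with respect to $\mathcal{W}_2$. Lower semicontinuity of the entropy and of $\int V \dd\rho$ are standard under the semi-convexity of $V$; the doubling property $w(x+y)\leq c_w(1+w(x)+w(y))$ is precisely the structural assumption used in~\cite{ambrosio2005gradient} to guarantee that the interaction functional is well defined on $\mathcal{P}_2(\R^d)$, has sublinear growth along geodesics, and is lower semicontinuous under narrow convergence with bounded second moments. Coercivity against a Gaussian reference measure is then automatic from $\nabla^2 V \geq \lambda I_d$ and $w\geq 0$.

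Next I would invoke the existence and uniqueness theorem for gradient flows of $\lambda$-convex functionals from~\cite[Thm.~11.2.1]{ambrosio2005gradient}, which produces a unique curve of maximal slope $\rho_\cdot$ starting from any $\rho_0 \in \overline{D(\mathcal{F})} = \mathcal{P}_2(\R^d)$ and satisfying the EVI$_\lambda$. Property (ii), continuity at $t=0$ in $\mathcal{W}_2$, is built into this construction. The identification of the Wasserstein subdifferential with $\nabla \delta\mathcal{F}/\delta\mu = \sigma^2 \nabla \ln \rho + \nabla E_\rho$, together with the chain rule for absolutely continuous curves, yields the energy dissipation identity (iv), since $\int |\sigma^2 \nabla\ln\rho + \nabla E_\rho|^2 \dd\rho = \sigma^4 \mathcal{I}(\rho|\Gamma(\rho))$ by expanding the square. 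The same identification then shows that the gradient flow is a distributional solution of~\eqref{eq:granular_media}, and uniqueness in the distributional class follows from $\lambda$-convexity via the contraction estimate for EVI$_\lambda$ flows.

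Finally, properties (i) and (iii) come from the regularizing effect built into the flow: by (iv) the Fisher information $\mathcal{I}(\rho_t|\Gamma(\rho_t))$ is finite for a.e.\ $t>0$, and comparing $\Gamma(\rho_t)$ locally to Lebesgue measure forces $\rho_t$ to be absolutely continuous with $\nabla\sqrt{\rho_t}\in L^2_\mathrm{loc}$, which is exactly the $L^1_\mathrm{loc}((0,\infty),W^{1,1}_\mathrm{loc}(\R^d))$ regularity asserted in (iii). The main obstacle I anticipate is not the abstract scheme but the careful identification of the metric slope of $\mathcal{F}$ with $\sigma^2\sqrt{\mathcal{I}(\rho|\Gamma(\rho))}$ on all of its domain; this is precisely where $\lambda$-geodesic convexity, and hence the convexity of $w$, becomes indispensable, as it upgrades one-sided slope estimates into the full chain rule needed to close the argument.
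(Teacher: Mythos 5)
Your proposal is correct and takes essentially the same route as the paper: recognize that $\mathcal F$ is of the internal + potential + interaction type with $\lambda$-geodesic convexity, verify properness, coercivity and lower semicontinuity, and invoke the Ambrosio--Gigli--Savaré machinery. The paper's proof is even terser — it simply observes that the hypotheses of \cite[Example~11.2.7]{ambrosio2005gradient} are met and then cites \cite[Theorem~11.2.8]{ambrosio2005gradient}, which packages in one statement the existence/uniqueness, the identification of the PDE, the regularization properties (i)--(iii), and the energy identity (iv) that you reconstruct step by step from Theorem~11.2.1, the subdifferential identification, and the chain rule.
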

\begin{proof}
With the present conditions on $V$ and $W$ the assumptions of~\cite[Example~11.2.7]{ambrosio2005gradient} are satisfied, so that \cite[Theorem~11.2.8]{ambrosio2005gradient} applies, which gives all the points of Proposition~\ref{prop:Ambrosio}.
\end{proof}


\medskip

From now on, Assumptions~\ref{assu:F}, \ref{assu:lfdE}, \ref{assu:loc-eq} and~\ref{assu:pde} are systematically enforced.

\subsection{Two known cases with global convergence}\label{sec:convex_interaction_faible}

To further motivate our study, let us now highlight that it is known that \eqref{eq:LSIlineaire_locale} is in fact sufficient to conclude (concerning the long-time behavior of the equation) in two cases: when $\mathcal E$ is functional-convex, or when the interaction is sufficiently small (two cases which, of course, do not allow for multiple stationary solutions).

\medskip

We start with functional-convexity (to be distinguished from displacement-convexity), which by definition means that
\begin{equation}
\label{eq:convexity}
\mathcal E(t\mu_0 + (1-t) \mu_1) \leqslant t \mathcal E(\mu_0) +(1-t)\mathcal E(\mu_1)\qquad t\in[0,1]\,,
\end{equation}
for all  $\mu_0,\mu_1 \in \mathcal P_2(\R^d)$. As stated in e.g. \cite{chizat}, assuming furthermore that $\mathcal F$ admits a minimizer $\rho_*\in\mathcal K$ then the latter is unique (thanks to the strict convexity of the entropy), and for all $\rho \in \mathcal P_2(\R^d)$, the following entropy sandwich inequalities hold:
\begin{equation}
\label{eq:sandwich}
\sigma^2 \mathcal H(\rho|\rho_*) \leqslant \mathcal F(\rho) - \mathcal F(\rho_*) \leqslant \sigma^2\mathcal H \po \rho|\Gamma(\rho)\pf\,,
\end{equation}
see also Lemma~\ref{lem:sandwich} below.  In particular, the second inequality, together with  \eqref{eq:LSIlineaire_locale}, implies \eqref{eq:LSInonlineaire} with $\oeta = \eta/\sigma^2$, hence the exponential decay of $\mathcal F(\rho_t)$ to its global minimum which, by the first inequality of \eqref{eq:sandwich}, gives the exponential convergence of $\mathcal H(\rho_t|\rho_*)$ to zero (which in turns implies the exponential convergence of $\rho_t$ to $\rho_*$ in total variation and, because $\rho_*$ satisfies a classical LSI thanks to \eqref{eq:LSIlineaire_locale} and thus a Talagrand inequality  \cite{OttoVillani}, in Wasserstein~2 distance). 

The convexity condition \eqref{eq:convexity} is known to hold in various settings, in particular for mean-field models of one-layer neuron networks \cite{Szpruch,chizat}\rev{, where
\begin{equation}
    \label{eq:neurones}
    \mathcal E\po \mu \pf = \int_{\R^{n+m}} \left| z - \int_{\R^d}\Phi(x,y)\mu(\dd x) \right|^2 m(\dd y\dd z) + c\int_{\R^d} |x|^2 \mu(\dd x)\,,  
\end{equation}
where $m$ is the distribution of the data ($y\in\R^n$ being the input, $z\in\R^m$ the output), $x$ is the parameter of a single neuron, $\Phi(x,\cdot)$ is the map parametrized by a neuron with parameters $x$, $\mu$ is the distribution of the parameters in the neurons of the layer, and $c>0$ is a penalization parameter.
}
In the granular media case \eqref{eq:def_F}, assume that the interaction potential is of the form
\begin{equation}
\label{eq:Wa_kr_k}
W(x,y) = W_0(x) + W_0(y) +2\sum_{k\in\N} r_k(x)r_k(y) -2 \sum_{k\in\N} a_k(x)a_k(y)
\end{equation}
for some   functions $W_0,a_k,r_k$ (as in e.g. the quadratic case \eqref{eq:double_well}; the letters $a$ and $r$ refers to \emph{attractive} and \emph{repulsive} by analogy with the quadratic case). Then we see that, for $\mathcal E(\rho)= \int (V + \frac12 \rho \star W)\rho$ and $t\in[0,1]$,
\begin{multline}\label{eq:courbureWa_kr_k}
\mathcal E(t\mu_0 + (1-t) \mu_1) -  t \mathcal E(\mu_0) - (1-t)\mathcal E(\mu_1) \\
= t(1-t) \sum_{k\in\N} \co \po \int_{\R^d} a_k (\mu_0-\mu_1)\pf^2  - \po \int_{\R^d} r_k (\mu_0-\mu_1)\pf^2\cf\,,
\end{multline}
so that \eqref{eq:convexity}  holds for all $\mu_0,\mu_1,t$  if and only if $a_k=0$ for all $k\in\N$, which corresponds to repulsive interaction (and for quadratic potentials \eqref{eq:double_well} to the case where $\theta\leqslant 0$, i.e. $W$ is concave). See also \cite[Section 3.1]{Songbo} for further examples satisfying \eqref{eq:convexity}.

In the general non-functional-convex cases, assuming that $\rho_*\in\mathcal K$ is a global minimizer of $\mathcal F$,   notice that the first inequality in \eqref{eq:sandwich}  cannot hold if there is another global minimizer $\rho'\in\mathcal K$ (since we would have $\mathcal H(\rho'|\rho_*)>0$ while $\mathcal F(\rho')=\mathcal F(\rho_*)$) and the second inequality (which is the one we used to get the global non-linear LSI \eqref{eq:LSInonlineaire} from the uniform classical LSI \eqref{eq:LSIlineaire_locale}) cannot hold as soon as there is a critical point of $\mathcal F$ which is not a global minimizer (since $\mathcal H(\rho|\Gamma(\rho))=0$ when $\rho\in\mathcal K$). 

\medskip

Second, consider the non-convex but small Lipschitz interaction settings. We assume that  $\mu \mapsto \na E_\mu$ is uniformly Lipschitz, namely there exists $L>0$ such that
\begin{equation}
\label{eq:Lipshitz}
\forall \rho,\rho'\in\mathcal P_2(\R^d)\,,\qquad \left\|\na E_{\rho} - \na E_{\rho'}\right\|_\infty \leqslant L \mathcal W_2(\rho,\rho')\,.\tag{\textbf{Lip}}
\end{equation}  
 We still assume   \eqref{eq:LSIlineaire_locale}, which by \cite{OttoVillani} implies  the uniform $T_2$ Talagrand inequality
\begin{equation}
\label{eq:NLT2}
\forall \nu,\rho\in\mathcal P_2(\R^d)\,,\qquad \mathcal W_2^2\po \nu, \Gamma(\rho)\pf \leqslant 4 \eta \mathcal H\po \nu|\Gamma(\rho)\pf\,.
\end{equation}
 Let $\rho_* \in\mathcal K$.  A classical computation shows that
\[\frac{\dd}{\dd t} \mathcal H(\rho_t |\rho_*) = -\sigma^2 \mathcal I(\rho_t|\rho_*) + \int_{\R^d} \po \na E_{\rho_*}- \na E_{\rho_t} \pf \cdot \na \ln\po  \frac{\rho_t}{\rho_*}\pf  \dd\rho_t\,.\]
 Using the Cauchy-Schwarz, log-Sobolev  and Talagrand inequalities,
\begin{eqnarray*}
\frac{\dd}{\dd t} \mathcal H(\rho_t |\rho_*) &\leqslant & -\sigma^2 \mathcal I(\rho_t|\rho_*) + \left\| \na E_{\rho_*}- \na E_{\rho_t}\right\|_\infty \sqrt{ \mathcal I(\rho_t|\rho_*)}\\
& \leqslant &  -\frac{\sigma^2}2 \mathcal I(\rho_t|\rho_*) + \frac{L^2}{2\sigma^2}   \mathcal W_2^2(\rho_t,\rho_*)\\
& \leqslant & \po -\frac{\sigma^2}{2\eta} + \frac{2 L^2\eta }{\sigma^2}\pf \mathcal H(\rho_t|\rho_*)\,.
\end{eqnarray*}
 In particular, as soon as $L\eta <2 \sigma^2$, we get an exponential convergence in relative entropy (hence total variation and $\mathcal W_2$) of $\rho_t$ to $\rho_*$, and in particular uniqueness of the critical point $\rho_*$.

\subsection{Lower-bounded curvature}

To revisit and generalize the two previous cases, let us consider the case where there exists a cost functional $\mathcal C:\mathcal P_2(\R^d)\times\mathcal P_2(\R^d)\rightarrow (-\infty,\infty]$ with $\mathcal C(\mu_0,\mu_1)=\mathcal C(\mu_1,\mu_0)$ and such that 
\begin{multline}
\label{eq:convexity-c}
\forall \mu_0,\mu_1\in\mathcal P_2(\R^d),t\in[0,1],\\
\mathcal E(t\mu_0 + (1-t) \mu_1) \leqslant t \mathcal E(\mu_0) +(1-t)\mathcal E(\mu_1) + t(1-t)\mathcal C(\mu_0,\mu_1) \,.\qquad
\tag{$\mathcal C$\textbf{-curv-}$\mathcal E$}
\end{multline}
Functional-convexity corresponds to $\mathcal C=0$. If $\mathcal C = \lambda \mathcal W_2^2$ for some $\lambda\in\R$, namely if
\begin{multline}
\label{eq:convexity-c_W2}
\forall \mu_0,\mu_1\in\mathcal P_2(\R^d),t\in[0,1],\\
\mathcal E(t\mu_0 + (1-t) \mu_1) \leqslant t \mathcal E(\mu_0) +(1-t)\mathcal E(\mu_1) + \lambda t(1-t)\mathcal W_2^2(\mu_0,\mu_1) \,,\qquad
\tag{$\mathcal W_2^2$\textbf{-curv-}$\mathcal E$}
\end{multline}
 it  can be interpreted in terms of Otto calculus by saying that the Hessian of $\mathcal E$ is lower-bounded by $-\lambda$. In the case \eqref{eq:Wa_kr_k} a bound \eqref{eq:convexity-c} follows from \eqref{eq:courbureWa_kr_k}, and if $a_k$ is $L_k$-Lipschitz for all $k\in\N$ with $\lambda=\sum_{k\in\N} L_k^2 <\infty$, \eqref{eq:convexity-c_W2} holds (in fact, a stronger inequality holds, with $\mathcal W_2$ replaced by $\mathcal W_1$, which could be of interest in some cases; in this work we use $\mathcal W_2$ everywhere for simplicity).

\begin{Example}
\new{Consider the Gaussian kernel interaction potential $ W(x,y) = a e^{-|x-y|^2} $ for some $a\in\R$, which appears for instance in the Adaptive Biasing Potential method \cite{ABP} and more generally in regularized approximations of processes which are influenced by the local density of particles (e.g. \cite{li2022sampling}). It is attractive for $a<0$ and repulsive for $a>0$. We can write it in the form \eqref{eq:Wa_kr_k}  by decomposing
\[W(x,y) =   a e^{-|x|^2-|y|^2}\prod_{i=1}^d \sum_{k\in\N}  \frac{(2x_i y_i)^k}{k!} = a \sum_{k\in\N^d} w_k(x)w_k(y)  \,,\]
with $w_k(x) = e^{-|x|^2}\prod_{i=1}^d \frac{2^{k_i/2}x_i^{k_i}}{\sqrt{k_i!}} $, so that \eqref{eq:convexity-c} holds with $\mathcal C(\mu_0,\mu_1) = 0$ if $a\geqslant 0$ and, if $a<0$,
\[\mathcal C(\mu_0,\mu_1)  = \frac{|a|}2 \sum_{k\in\N^d} \po \int_{\R^d} w_k (\mu_0-\mu_1)\pf^2 \,.\]
Although it can be checked that $\sum_{k\in\N^d} \|\na w_k\|_\infty^2 =+\infty$,  we can get \eqref{eq:convexity-c_W2} as follows: given $(X,Y)$ a coupling of $\mu_0$ and $\mu_1$,
\[
\mathcal C(\mu_0,\mu_1)  = \frac{|a|}2 \sum_{k\in\N^d} \po \mathbb E \po w_k(X)-w_k(Y) \pf\pf ^2 \leqslant \frac{|a|}2 \mathbb E \po h(X,Y)\pf \] 
with
\begin{eqnarray*}
h(x,y) & = & \sum_{k\in\N^d} |w_k(x)-w_k(y)|^2  \\
&= & \sum_{k\in\N^d}  \po w_k^2(x) + w_k^2(y) - 2 w_k(x)w_k(y)\pf \\
& = & 2  - 2 e^{-|x-y|^2 } \ \leqslant \ 2|x-y|^2\,. 
\end{eqnarray*}
Taking the infimum over the couplings $(X,Y)$ gives $\mathcal C(\mu_0,\mu_1) \leqslant |a| \mathcal W_2^2(\mu_0,\mu_1)$.
}
\end{Example} 

\begin{Example}\label{ex:hilbert}
Considering the settings of \cite{chizat2018global}, let $\hil$ be a Hilbert space with norm $\|\cdot\|_{\hil}$, $\varphi:\R^d\rightarrow \hil$, $R:\hil \rightarrow \R$ and $V_0:\R^d\rightarrow \R$. Consider 
\[\mathcal E(\mu) = \int_{\R^d} V_0(x) \mu(\dd x) +  R\po \int_{\R^d} \varphi(x) \mu(\dd x)\pf\,,\]
where the integral of an $\hil$-valued function is understood in Bochner's sense, and we take the convention that $\mathcal{E}(\mu)=\infty$ if $\int_{\R^d} \|\varphi(x)\|_{\hil}\mu(\dd x)=\infty$. As discussed in \cite{chizat2018global}, this encompasses many cases of interest in optimization, machine learning and stastistics in high dimension. Assume that the curvature of $R$ is lower-bounded in the sense that there exists $\theta>0$ such that $  R+\theta \|\cdot\|_{\hil}^2$ is convex. This  implies  that \eqref{eq:convexity-c} holds with 
\[\mathcal C(\mu_0,\mu_1)  = \theta \left\|\int_{\R^d} \varphi \dd  \mu_0 - \int_{\R^d} \varphi \dd  \mu_1\right\|^2_{\hil}\,.\]
This generalizes the quadratic finite-dimensional case \eqref{eq:courbureWa_kr_k}. Assuming furthemore that $\varphi$ is Lipschitz continuous (namely $\|\varphi(x)-\varphi(y)\|_{\hil} \leqslant \ell |x-y|$ for all $x,y\in\R^d$), we get \eqref{eq:convexity-c_W2} with $\lambda = \theta\ell^2$. 
\end{Example}

  The following generalizes the entropy sandwich inequality \eqref{eq:sandwich} when $\mathcal C\neq 0$.
 
 \begin{lem}
 \label{lem:sandwich}
 Under \eqref{eq:convexity-c}, for all $\mu_0,\mu_1\in\mathcal P_2(\R^d)$ and $\rho_*\in\mathcal K$,
 \begin{equation}
 \mathcal F(\mu_0)   \leqslant \mathcal F(\mu_1) +  \sigma^2 \mathcal H\po \mu_0|\Gamma(\mu_0)\pf+   \mathcal C(\mu_0,\mu_1)\,,\label{eq:Fmu0Fmu1<}
 \end{equation}
 and
 \begin{equation}
\label{eq:Fmu0Fmu1>}
\mathcal F(\rho_*) +  \sigma^2 \mathcal H\po \mu_0|\rho_*\pf  \leqslant 
\mathcal F(\mu_0)  + \mathcal C(\mu_0,\rho_*)    \,.
\end{equation}
 \end{lem}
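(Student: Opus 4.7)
The plan is to derive both inequalities from a common starting point: a first-order consequence of the curvature assumption \eqref{eq:convexity-c}. Specifically, fixing $\mu_0,\mu_1\in\mathcal P_2(\R^d)$ with $\mathcal{E}(\mu_0)+\mathcal{E}(\mu_1)<\infty$ (otherwise both inequalities reduce to routine $\infty$-handling), rearranging \eqref{eq:convexity-c} as
\[
\frac{\mathcal{E}(\mu_0+t(\mu_1-\mu_0))-\mathcal{E}(\mu_0)}{t}\;\leqslant\;\mathcal{E}(\mu_1)-\mathcal{E}(\mu_0)+(1-t)\mathcal{C}(\mu_0,\mu_1),
\]
and letting $t\to 0^+$ via \eqref{eq:cv-lfdE} (whose validity is guaranteed by Assumption~\ref{assu:lfdE}(iii)), I would obtain the key subgradient-type inequality
\[
\int_{\R^d}E_{\mu_0}(x)(\mu_1-\mu_0)(\mathrm{d}x)\;\leqslant\;\mathcal{E}(\mu_1)-\mathcal{E}(\mu_0)+\mathcal{C}(\mu_0,\mu_1).
\]

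To deduce \eqref{eq:Fmu0Fmu1<}, I would substitute $E_{\mu_0}=-\sigma^2\ln\Gamma(\mu_0)-\sigma^2\ln Z_{\mu_0}$ into the previous display (the constant $\ln Z_{\mu_0}$ disappears because $\mu_1-\mu_0$ has zero total mass), so that the left-hand side reads
\[
-\sigma^2\int_{\R^d}\mu_1\ln\Gamma(\mu_0)\,\mathrm{d}x+\sigma^2\int_{\R^d}\mu_0\ln\Gamma(\mu_0)\,\mathrm{d}x.
\]
Then I would add and subtract $\sigma^2\mathcal{H}(\mu_0)$ and $\sigma^2\mathcal{H}(\mu_1)$, recognize that $\sigma^2\int\mu_i\ln(\mu_i/\Gamma(\mu_0))\,\mathrm{d}x=\sigma^2\mathcal{H}(\mu_i|\Gamma(\mu_0))$ for $i=0,1$, and collect terms to get
\[
\mathcal{F}(\mu_0)+\sigma^2\mathcal{H}(\mu_1|\Gamma(\mu_0))-\sigma^2\mathcal{H}(\mu_0|\Gamma(\mu_0))\;\leqslant\;\mathcal{F}(\mu_1)+\mathcal{C}(\mu_0,\mu_1).
\]
Dropping the nonnegative term $\sigma^2\mathcal{H}(\mu_1|\Gamma(\mu_0))\geqslant 0$ on the left gives \eqref{eq:Fmu0Fmu1<}.

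For \eqref{eq:Fmu0Fmu1>}, I would apply the same gradient-type inequality with the roles of $\mu_0$ and $\mu_1$ swapped and $\mu_1$ replaced by $\rho_*\in\mathcal{K}$, yielding
\[
\int_{\R^d}E_{\rho_*}(\mu_0-\rho_*)(\mathrm{d}x)\;\leqslant\;\mathcal{E}(\mu_0)-\mathcal{E}(\rho_*)+\mathcal{C}(\rho_*,\mu_0).
\]
Now the self-consistency relation $\rho_*=\Gamma(\rho_*)$ yields $E_{\rho_*}=-\sigma^2\ln\rho_*-\sigma^2\ln Z_{\rho_*}$; substituting and again using that the constant drops, the left-hand side becomes $\sigma^2\mathcal{H}(\rho_*)-\sigma^2\int\mu_0\ln\rho_*\,\mathrm{d}x$. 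Adding and subtracting $\sigma^2\mathcal{H}(\mu_0)$ regroups the terms exactly into $\sigma^2\mathcal{H}(\mu_0|\rho_*)+\sigma^2\mathcal{H}(\rho_*)-\sigma^2\mathcal{H}(\mu_0)$, whence \eqref{eq:Fmu0Fmu1>} follows after rearrangement, using symmetry $\mathcal{C}(\rho_*,\mu_0)=\mathcal{C}(\mu_0,\rho_*)$.

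The only delicate step is the limit $t\to 0^+$, which depends on the dominated-convergence statement \eqref{eq:cv-lfdE} and hence on Assumption~\ref{assu:lfdE}(iii); everything else is bookkeeping that exploits the particular exponential form of $\Gamma$ and, for the second inequality, the fixed-point equation for $\rho_*$. I do not expect any genuine obstacle, only a need to be careful that all integrals involving $\ln\Gamma(\mu_0)$ and $\ln\rho_*$ are well-defined — both $\Gamma(\mu_0)$ and $\rho_*$ are positive Boltzmann densities thanks to Assumption~\ref{assu:loc-eq}, so the relative entropies appearing on the right-hand sides are well-defined in $[0,+\infty]$ and the inequalities remain meaningful in the extended sense.
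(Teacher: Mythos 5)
Your proof is correct and follows essentially the same route as the paper's: divide the curvature inequality by $t$, send $t\to 0$ to obtain a first-order (subgradient-type) inequality via \eqref{eq:cv-lfdE}, then rewrite the linear term $\int E_\cdot(\mu_1-\mu_0)$ as a difference of relative entropies with respect to $\Gamma(\cdot)$, and conclude either by dropping the nonnegative cross-entropy $\mathcal H(\mu_1|\Gamma(\mu_0))$ (for the first inequality) or by invoking $\Gamma(\rho_*)=\rho_*$ so that $\mathcal H(\rho_*|\Gamma(\rho_*))=0$ (for the second). The only cosmetic difference is that you take the derivative at $\mu_0$ and handle the second inequality by a substitution, where the paper differentiates at $\mu_1$ and then swaps roles to get a two-sided chain — but these are the same computation read in opposite directions.
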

 \begin{proof} To prove each inequality, we assume that the right-hand side is finite, otherwise the result is trivial.
 
   Dividing by $t$ and sending $t$ to zero in \eqref{eq:convexity-c} yields (using~\eqref{eq:cv-lfdE}),
\[\int_{\R^d} E_{\mu_1} (\mu_0-\mu_1) + \mathcal E(\mu_1) \leqslant  \mathcal E(\mu_0)  +     \mathcal C(\mu_0,\mu_1)\,,
\]
and thus, switching the roles of $\mu_1$ and $\mu_0$,
\[\int_{\R^d} E_{\mu_1} (\mu_0-\mu_1) -     \mathcal C(\mu_0,\mu_1) \leqslant  \mathcal E(\mu_0) -  \mathcal E(\mu_1) \leqslant \int_{\R^d} E_{\mu_0} (\mu_0-\mu_1) +     \mathcal C(\mu_0,\mu_1)\,.
\]
Then
\begin{eqnarray*}
\mathcal F(\mu_0) -  \mathcal F(\mu_1) & \leqslant  & \int_{\R^d} E_{\mu_0} (\mu_0-\mu_1) +     \mathcal C(\mu_0,\mu_1) + \sigma^2 \mathcal H  (\mu_0) - \sigma^2 \mathcal H(\mu_1)\nonumber\\
& = &  \sigma^2 \mathcal H\po \mu_0|\Gamma(\mu_0)\pf - \sigma^2 \mathcal H\po \mu_1|\Gamma(\mu_0)\pf +   \mathcal C(\mu_0,\mu_1) \nonumber\\
& \leqslant & \sigma^2 \mathcal H\po \mu_0|\Gamma(\mu_0)\pf+   \mathcal C(\mu_0,\mu_1)\,,
\end{eqnarray*}
and
\begin{eqnarray*}
\mathcal F(\mu_0) -  \mathcal F(\mu_1) & \geqslant  & \int_{\R^d} E_{\mu_1} (\mu_0-\mu_1) -     \mathcal C(\mu_0,\mu_1) + \sigma^2 \mathcal H  (\mu_0) - \sigma^2 \mathcal H(\mu_1)\\
& = &  \sigma^2 \mathcal H\po \mu_0|\Gamma(\mu_1)\pf - \sigma^2 \mathcal H\po \mu_1|\Gamma(\mu_1)\pf -  \mathcal C(\mu_0,\mu_1) \,.
\end{eqnarray*}
The second term of the right hand side vanishes if $\mu_1\in\mathcal K$, which concludes.
 \end{proof}
 
This result will prove useful in our cases of interest, namely when $\mathcal F$ has several critical points. Nevertheless, for now, in the spirit of the previous section, let us discuss how \eqref{eq:LSIlineaire_locale} may already give a global convergence in some cases under \eqref{eq:convexity-c}.

Indeed, assuming a Talagrand type inequality
\[\mathcal C(\mu_0,\rho_*) \leqslant \eta' \mathcal H(\mu_0|\rho_*)\]
for some $\eta'>0$ (which, if $\mathcal C \leqslant \lambda \mathcal W_2^2$ for some $\lambda>0$, is implied by  \eqref{eq:LSIlineaire_locale} with $\eta'=4\eta\lambda$), using \eqref{eq:Fmu0Fmu1>} and then \eqref{eq:Fmu0Fmu1<},
\begin{eqnarray*}
\mathcal C(\mu_0,\rho_*)  \leqslant  \eta' \mathcal H(\mu_0|\rho_*) & \leqslant &  \frac{\eta'}{\sigma^2}  \po \mathcal F(\mu_0) -  \mathcal F(\rho_*) +   \mathcal C(\mu_0,\rho_*) \pf \\
& \leqslant & \eta' \mathcal H\po \mu_0|\Gamma(\mu_0)\pf +  \frac{2\eta'}{\sigma^2}     \mathcal C(\mu_0,\rho_*) \,.
\end{eqnarray*}
Hence, under the condition $2\eta ' < \sigma^2$ (i.e., again, high temperature or small interaction), we obtain a non-linear transport inequality
\begin{equation}
\label{eq:non-lin-transport}
\mathcal C(\mu_0,\rho_*) \leqslant \frac{\eta' }{1-2\eta '/\sigma^2 }\mathcal H\po \mu_0|\Gamma(\mu_0)\pf \,.
\end{equation}
Together with the LSI for $\rho_*$ and \eqref{eq:Fmu0Fmu1<}, this proves \eqref{eq:LSInonlineaire}, since
\[\mathcal F(\mu_0) -  \mathcal F(\rho_*) \leqslant \po \sigma^2 + \frac{\eta ' }{1-2\eta '/\sigma^2 }\pf  \mathcal H\po \mu_0|\Gamma(\mu_0)\pf  \leqslant \eta \po \sigma^2 + \frac{\eta ' }{1-2\eta' /\sigma^2 }\pf  \mathcal I\po \mu_0|\Gamma(\mu_0)\pf\,. \]
Notice that, conversely, the non-linear transport inequality \eqref{eq:non-lin-transport} with $\mathcal C=\mathcal W_2^2$ is shown to be a consequence of \eqref{eq:LSInonlineaire} in \cite{Pavliotis} (see also Lemma~\ref{lem:localLSI_consequence} below).

\subsection{Local non-linear log-Sobolev inequalities}

Consider a non-empty set $\mathcal A\subset \mathcal P_2(\R^d)$ with  $\inf_\mathcal{A} \mathcal{F}<\infty$. Instead of the global condition~\eqref{eq:LSInonlineaire}, the key ingredient in our approach is to obtain (local) non-linear LSI of the form
\begin{equation}
\label{eq:loc_LSInonlineaire}
\forall \rho\in\mathcal A\,,\qquad \mathcal F( \rho) - \inf_{\mu\in\mathcal A} \mathcal F(\mu)  \leqslant  \oeta \sigma^4  \mathcal I\po \rho|\Gamma(\rho)\pf 
\tag{\textbf{NL-LSI}}
\end{equation}
for some $\oeta>0$. The fact that it is possible to get this in some subsets $\mathcal A$ even in cases where $\mathcal F$ has several critical points is the topic of Section~\ref{sec:LSIparametric}. For now we assume that this holds in some   set $\mathcal A$ and we discuss its consequences.

First, since the left hand side of  \eqref{eq:loc_LSInonlineaire} is non-negative and the  right hand side vanishes when $\rho \in \mathcal K$, we get that
\begin{equation}
\label{eq:rho*min}
\rho_* \in \mathcal A \cap \mathcal K \qquad \Rightarrow \qquad \mathcal F(\rho_*) = \inf_{\mu\in\mathcal A} \mathcal F(\mu) \,.
\end{equation}
Second, the results which were known to hold under \eqref{eq:LSInonlineaire} still hold as long as the solution of \eqref{eq:granular_media_2} stays within $\mathcal A$, as we state in the next lemma. For conciseness we write
\[\mathcal F_{\mathcal A}(\rho) = \mathcal F( \rho) - \inf_{\mu\in\mathcal A} \mathcal F(\mu)\,.\]
For $\rho_0\in\mathcal P_2(\R^d)$, we write
\[T_{\mathcal A}(\rho_0) = \inf\{t\geqslant 0,\ \rho_t\notin \mathcal A\}\,,\]
with $(\rho_t)_{t\geqslant 0}$ the solution to \eqref{eq:granular_media_2}. The next lemma is the infinite-dimensional version of \eqref{eq:decayFinidim} and \eqref{eq:transportFinidim} and follows the same proof.

\begin{lem}\label{lem:localLSI_consequence}
Assume that \eqref{eq:loc_LSInonlineaire} holds on $\mathcal A$. Then, for all $\rho_0\in\mathcal A$ and $t\leqslant T_{\mathcal A}(\rho_0)$,
\begin{equation}
\label{eq:lem-F-decay}
\mathcal F_{\mathcal A}(\rho_t) \leqslant e^{-t/\oeta } \mathcal F_{\mathcal A}(\rho_0)
\end{equation}
and
\begin{equation}
\label{eq:lem-W2transport}
\mathcal W_2^2(\rho_t,\rho_0) \leqslant 4 \oeta \mathcal F_{\mathcal A}(\rho_0)\,.
\end{equation}
\end{lem}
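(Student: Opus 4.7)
The plan is to mimic the finite-dimensional argument sketched in \eqref{eq:decayFinidim}--\eqref{eq:transportFinidim}, working in the integrated form rather than differentiating pointwise, since Assumption~\ref{assu:pde} only guarantees that \eqref{eq:dissipationF} holds for a.e. $t$.

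\textbf{Step 1 (exponential decay of the free energy).} Fix $\rho_0\in\mathcal A$ and set $G(t)=\mathcal F_{\mathcal A}(\rho_t)=\mathcal F(\rho_t)-\inf_{\mathcal A}\mathcal F$. For $t\leqslant T_{\mathcal A}(\rho_0)$ one has $\rho_t\in\mathcal A$, hence by \eqref{eq:loc_LSInonlineaire},
\begin{equation*}
\sigma^4\mathcal I\po\rho_t|\Gamma(\rho_t)\pf \geqslant \frac{1}{\oeta}\,G(t)\qquad\text{for a.e. }t\leqslant T_{\mathcal A}(\rho_0).
\end{equation*}
Combining with \eqref{eq:dissipationF}, which by Assumption~\ref{assu:pde} gives $G'(t)=-\sigma^4\mathcal I(\rho_t|\Gamma(\rho_t))$ a.e. and $G$ absolutely continuous (as a consequence of the gradient-flow structure, cf. \cite[Thm.~11.1.3]{ambrosio2005gradient}), one obtains $G'(t)\leqslant -G(t)/\oeta$ a.e.\ on $[0,T_{\mathcal A}(\rho_0)]$. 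Grönwall's lemma in integrated form yields \eqref{eq:lem-F-decay}.

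\textbf{Step 2 (Wasserstein transport bound).} The key ingredient is that, $\rho_{\cdot}$ being the $\mathcal W_2$-gradient flow of $\mathcal F$, it is an absolutely continuous curve in $(\mathcal P_2(\R^d),\mathcal W_2)$ whose metric derivative equals the local slope of $\mathcal F$, namely
\begin{equation*}
|\dot\rho_r|_{\mathcal W_2}^2 = \int_{\R^d}\left|\nabla\frac{\delta\mathcal F(\rho_r)}{\delta\mu}\right|^2\dd\rho_r = \sigma^4\,\mathcal I\po\rho_r|\Gamma(\rho_r)\pf\qquad\text{for a.e. }r>0,
\end{equation*}
so that $\mathcal W_2(\rho_t,\rho_0)\leqslant \int_0^t \sigma^2\sqrt{\mathcal I(\rho_r|\Gamma(\rho_r))}\,\dd r$. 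To bound this integral we apply the chain rule to $t\mapsto 2\sqrt{\oeta\, G(t)}$: using $G'(r)=-\sigma^4\mathcal I(\rho_r|\Gamma(\rho_r))$ and \eqref{eq:loc_LSInonlineaire} (in the form $\sqrt{G(r)}\leqslant\sqrt{\oeta}\,\sigma^2\sqrt{\mathcal I(\rho_r|\Gamma(\rho_r))}$), we get
\begin{equation*}
\frac{\dd}{\dd r}\,2\sqrt{\oeta\, G(r)} = -\frac{\sqrt{\oeta}\,\sigma^4\mathcal I(\rho_r|\Gamma(\rho_r))}{\sqrt{G(r)}} \leqslant -\sigma^2\sqrt{\mathcal I(\rho_r|\Gamma(\rho_r))}\qquad\text{a.e. on }[0,T_{\mathcal A}(\rho_0)].
\end{equation*}
Integrating between $0$ and $t$ gives
\begin{equation*}
\int_0^t\sigma^2\sqrt{\mathcal I(\rho_r|\Gamma(\rho_r))}\,\dd r \leqslant 2\sqrt{\oeta\, G(0)}-2\sqrt{\oeta\, G(t)} \leqslant 2\sqrt{\oeta\,\mathcal F_{\mathcal A}(\rho_0)},
\end{equation*}
and squaring yields \eqref{eq:lem-W2transport}.

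\textbf{Main obstacle.} The only delicate point is the low regularity of $t\mapsto G(t)$ near $t=0$: in general $G(0)$ may be $+\infty$ or much larger than $G(t)$ for small $t>0$, and $G$ is only absolutely continuous on $(0,\infty)$ rather than on $[0,\infty)$. To handle this, one works on $[s,t]$ with $0<s<t\leqslant T_{\mathcal A}(\rho_0)$ (where Assumption~\ref{assu:pde} ensures that $\mathcal F(\rho_s),\mathcal H(\rho_s|\Gamma(\rho_s))$ are finite and the chain-rule manipulations above are justified), then lets $s\downarrow 0$. The $\mathcal W_2$-continuity of $t\mapsto\rho_t$ at $0$ passes the bound $\mathcal W_2(\rho_t,\rho_s)\leqslant 2\sqrt{\oeta\,\mathcal F_{\mathcal A}(\rho_s)}$ to the limit, while the $\mathcal W_2$-lower semicontinuity of $\mathcal F$ (or direct monotonicity of $G$) takes care of the right-hand side, yielding the stated inequalities up to $t=0$.
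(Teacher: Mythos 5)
Your argument is correct and follows essentially the same route as the paper's proof. Step~1 is identical: combine the non-linear LSI with the dissipation identity~\eqref{eq:dissipationF} and integrate the resulting differential inequality. In Step~2 the only cosmetic difference is in how the bound on the metric derivative is obtained: you invoke directly the gradient-flow characterization $|\dot\rho_r|_{\mathcal W_2}^2 = \sigma^4\mathcal I(\rho_r|\Gamma(\rho_r))$ (the ``metric derivative equals local slope'' property embedded in \cite[Definition 11.1.1]{ambrosio2005gradient}), whereas the paper derives the needed one-sided inequality $|\dot\rho_r|\leqslant\sigma^2\sqrt{\mathcal I(\rho_r|\Gamma(\rho_r))}$ more hands-on from the Benamou--Brenier formula together with Lebesgue differentiation. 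Both give the same integral bound $\mathcal W_2(\rho_0,\rho_t)\leqslant\sigma^2\int_0^t\sqrt{\mathcal I(\rho_r|\Gamma(\rho_r))}\,\dd r$, after which you and the paper run the identical chain-rule estimate on $\sqrt{\mathcal F_{\mathcal A}(\rho_r)}$ and rearrange. Your closing remark on the $s\downarrow 0$ limit is a fair point, though note that if $\mathcal F(\rho_0)=+\infty$ both claimed inequalities are vacuous, and if $\mathcal F(\rho_0)<\infty$ then Assumption~\ref{assu:pde} (via Proposition~\ref{prop:Ambrosio}, item~(iv)) already provides the absolute continuity and finiteness needed on $(0,t]$ together with $\mathcal W_2$-continuity at $0$, so the paper does not flag this step explicitly.
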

\begin{proof}
The first inequality is straightforward from \eqref{eq:dissipationF} and \eqref{eq:loc_LSInonlineaire}. The proof of the second inequality is the same as the proof of \cite[Theorem 3.2]{Pavliotis} (although a factor 2 seems to disappear wrongly in the latter when using the Benamou-Brenier formula, which explains the difference with our result; notice that our factor 4 is consistent with the classical linear case \cite{OttoVillani}, or the finite-dimensional settings \eqref{eq:transportFinidim})\rev{, as we now detail}. Since \eqref{eq:granular_media_2} can be written as the continuity equation
\[\partial_t \rho_t = \nabla \cdot \po \rho_t v_t\pf\qquad v_t := \na  \frac{\delta\mathcal{F}(\rho_t) }{\delta\mu} \,,\]
by the Benamou-Brenier formulation of the Wasserstein distance~\cite{BenamouBrenier}, for any $0 \leq s < t$,
\begin{equation*}
    \mathcal{W}_2^2(\rho_s,\rho_t) \leq (t-s) \int_{s}^t \int_{\R^d} |v_r|^2 \rho_r \dd r = \sigma^4(t-s)\int_s^t \mathcal{I}(\rho_r|\Gamma(\rho_r))\dd r.
\end{equation*}
Thus,
\begin{equation*}
    \frac{\mathcal{W}_2(\rho_s,\rho_t)}{t-s} \leq \sigma^2 \sqrt{\frac{1}{t-s}\int_s^t \mathcal{I}(\rho_r|\Gamma(\rho_r))\dd r},
\end{equation*}
so by Lebesgue's differentiation theorem, the metric derivative of $(\rho_t)_{t \geq 0}$ defined in~\cite[Theorem~1.1.2]{ambrosio2005gradient} satisfies
\begin{equation*}
    |\rho'_t| \leq \sigma^2 \sqrt{\mathcal{I}(\rho_t|\Gamma(\rho_t))},
\end{equation*}
$\dd t$-almost everywhere. Since, by~\cite[Theorem~1.1.2]{ambrosio2005gradient}, we have on the other hand
\begin{equation*}
    \mathcal{W}_2(\rho_s,\rho_t) \leq \int_s^t |\rho'_r| \dd r,
\end{equation*}
we get
\[\mathcal W_2(\rho_0,\rho_t)  \leqslant  \sigma^2 \int_0^t \sqrt{ \mathcal I\po \rho_s|\Gamma(\rho_s)\pf  }\dd s\,. \] 
Besides, for $t\leqslant T_{\mathcal A}(\rho_0)$, 
\[
\frac{\dd}{\dd t} \mathcal F_{\mathcal A}(\rho_t)  =  - \sigma^4 \mathcal I\po \rho_t|\Gamma(\rho_t)\pf \\
 \leqslant  -  \sqrt{ \sigma^ 4 \mathcal I\po \rho_t|\Gamma(\rho_t)\pf \mathcal F_{\mathcal A}(\rho_t) /\oeta  }\,,\]
 from which
\[ \sqrt{\mathcal F_{\mathcal A}(\rho_t)} - \sqrt{\mathcal F_{\mathcal A}(\rho_0)}  \leqslant  -\frac{\sigma^2}{2\sqrt{\oeta}} \int_0^t \sqrt{ \mathcal I\po \rho_s|\Gamma(\rho_s)\pf } \dd s   \leqslant  -\frac{1}{2\sqrt{\oeta}} \mathcal W_2(\rho_t,\rho_0)\,.
 \]
  Rearranging the terms and using that $ \sqrt{\mathcal F_{\mathcal A}(\rho_t)}  \geqslant 0$ concludes \rev{the proof of \eqref{eq:lem-W2transport}}.
\end{proof}

As a consequence, when  \eqref{eq:loc_LSInonlineaire} holds in a set $\mathcal A\subset \mathcal P_2(\R^d)$ that contains a unique \rev{a local minimizer $\rho_*\in\mathcal K$ and no other critical point}, in order to get the long-time convergence to $\rho_*$ at exponential speed starting from an initial condition $\rho_0\in\mathcal A$,  it mainly remains to show that $T_{\mathcal A}(\rho_0)=+\infty$. In specific cases, this can be shown by various means, for instance in dimension 1 by monotonicity arguments as in \cite{Tugautmonotone}, or in general using that the free energy decreases as in \cite{tugaut2014self}, etc. In the following, we focus on a self-contained argument which establishes the exponential convergence towards $\rho_*$ under the natural condition that $\mathcal W_2(\rho_*,\rho_0)$ is sufficiently small (in particular, without assuming that $\mathcal F(\rho_0)<\infty$).

 \subsection{Auxiliary regularization results}\label{sec:auxiliary}
 
Let us recall two results. The first is proven in \cite[Lemma 4.9]{Songbo}.

\begin{lem}\label{lem:Songbo}
Under \eqref{eq:Lipshitz} and \eqref{eq:LSIlineaire_locale}, for all $\rho \in\mathcal P_2(\R^d)$ and $\rho_*\in\mathcal K$, 
\[\mathcal H\po \rho |\Gamma(\rho)\pf \leqslant \po 1+ \eta L + \frac{L^2 \eta^2}{2}\pf \mathcal H(\rho|\rho_*)\,.\] 
\end{lem}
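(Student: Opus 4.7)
The natural starting point is the identity
\[
\mathcal H(\rho|\Gamma(\rho)) - \mathcal H(\rho|\rho_*) = \int \ln\frac{\rho_*}{\Gamma(\rho)}\,\dd\rho,
\]
which comes directly from the definition of relative entropy. Because $\rho_*\in\mathcal K$, both $\rho_*$ and $\Gamma(\rho)$ have the Gibbs form from~\eqref{eq:Gamma}, and setting $g = (E_\rho - E_{\rho_*})/\sigma^2$ one has $\ln(\rho_*/\Gamma(\rho)) = g + \ln(Z_\rho/Z_{\rho_*})$. Multiplying and dividing by $e^{E_{\rho_*}/\sigma^2}/Z_{\rho_*}$ yields the useful identity $Z_\rho/Z_{\rho_*} = \int e^{-g}\dd\rho_*$, so that
\[
\mathcal H(\rho|\Gamma(\rho)) - \mathcal H(\rho|\rho_*) = \int g \,\dd\rho + \ln\int e^{-g}\,\dd\rho_*.
\]

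The plan is then to split this quantity into a ``first-order'' and a ``second-order'' piece by recentering. Writing $\bar g = \int g\,\dd\rho_*$ and $\tilde g = g - \bar g$, one obtains
\[
\mathcal H(\rho|\Gamma(\rho)) - \mathcal H(\rho|\rho_*) = \int \tilde g\,\dd(\rho-\rho_*) + \ln \int e^{-\tilde g}\,\dd\rho_*.
\]
By \eqref{eq:Lipshitz}, $g$ and $\tilde g$ are Lipschitz with constant at most $L\mathcal{W}_2(\rho,\rho_*)/\sigma^2$. For the first term I would invoke Kantorovich-Rubinstein duality together with $\mathcal{W}_1\leq \mathcal{W}_2$ to bound it by $L\mathcal{W}_2^2(\rho,\rho_*)/\sigma^2$.

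For the second term, the uniform LSI~\eqref{eq:LSIlineaire_locale} applied to $\rho_* = \Gamma(\rho_*)$ gives that $\rho_*$ itself satisfies a classical log-Sobolev inequality with constant $\eta$, hence (by the Herbst argument) Gaussian concentration: for any Lipschitz mean-zero function $\tilde g$, $\ln \int e^{-\tilde g}\dd \rho_* \leq C\eta \|\nabla g\|_\infty^2$, with a constant $C$ that only depends on the normalization of the LSI. By the same token one has the Talagrand $T_2$ inequality $\mathcal{W}_2^2(\rho,\rho_*) \leq 4\eta \mathcal H(\rho|\rho_*)$ via Otto-Villani. Feeding the Lipschitz bound on $\nabla g$ and then $T_2$ into both contributions yields a bound of the claimed form
\[
\mathcal H(\rho|\Gamma(\rho)) \leq \bigl(1 + c_1 \eta L + c_2 \eta^2 L^2\bigr)\mathcal H(\rho|\rho_*),
\]
for explicit constants $c_1,c_2$.

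The only real obstacle is matching the specific constants $(1,\eta L, \eta^2 L^2/2)$ of the statement: the scheme above delivers the correct structure but one has to be careful with Lipschitz-to-$\mathcal{W}_2$-to-entropy bookkeeping (using $\mathcal{W}_1$ rather than $\mathcal{W}_2$ at the right spot for the linear term, and optimizing the scaling parameter $\lambda$ in the Herbst inequality $\int e^{\lambda\tilde g}\dd\rho_* \leq e^{\eta L_g^2 \lambda^2}$ rather than plugging $\lambda = -1$ directly) in order to avoid loss. The remainder is routine.
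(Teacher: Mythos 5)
The paper itself does not prove this lemma: it simply cites \cite[Lemma 4.9]{Songbo}. So there is no in-paper argument to compare against, and your decomposition is the natural one (split $\mathcal H(\rho|\Gamma(\rho))-\mathcal H(\rho|\rho_*)=\int\tilde g\,\dd(\rho-\rho_*)+\ln\int e^{-\tilde g}\dd\rho_*$ with $\tilde g$ the centered version of $(E_\rho-E_{\rho_*})/\sigma^2$, then Kantorovich--Rubinstein, Herbst, and the Talagrand inequality). That is almost certainly the skeleton of Songbo's proof too, so the structural idea is right.

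The part you call \emph{routine} is where there is a genuine gap, however, and I think you underestimate it. Running your bookkeeping carefully: $\|\nabla\tilde g\|_\infty\le L\,\mathcal W_2(\rho,\rho_*)/\sigma^2$ by \eqref{eq:Lipshitz}; the first term is then bounded by $\|\nabla\tilde g\|_\infty\,\mathcal W_1(\rho,\rho_*)\le L\,\mathcal W_2^2(\rho,\rho_*)/\sigma^2$; the Herbst bound under $\mathcal H(\nu|\rho_*)\le\eta\,\mathcal I(\nu|\rho_*)$ gives $\ln\int e^{-\tilde g}\dd\rho_*\le\eta\|\nabla\tilde g\|_\infty^2\le\eta L^2\mathcal W_2^2(\rho,\rho_*)/\sigma^4$; and \eqref{eq:NLT2} gives $\mathcal W_2^2(\rho,\rho_*)\le4\eta\,\mathcal H(\rho|\rho_*)$. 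Feeding this through yields
\[
\mathcal H(\rho|\Gamma(\rho))\leqslant\left(1+\frac{4\eta L}{\sigma^2}+\frac{4\eta^2 L^2}{\sigma^4}\right)\mathcal H(\rho|\rho_*),
\]
which is not $\left(1+\eta L+\tfrac{\eta^2 L^2}{2}\right)$: the coefficients are off by factors of $4$ and $8$, and there are $\sigma^2$ factors that the stated lemma does not have. Neither ``using $\mathcal W_1$ at the right spot'' nor optimizing the Herbst parameter closes this gap, since the Talagrand factor $4\eta$ and the Herbst factor $\eta$ are both sharp (test against a Gaussian). So either the cited lemma is stated in a different normalization of $L$ and $\eta$ (e.g.\ $\sigma^2=1$ and the Lipschitz bound stated for the drift $-\nabla E_\rho/\sigma^2$ rather than $\nabla E_\rho$, and the LSI constant in a $T_2$-matched convention), in which case you should make that translation explicit, or the constants in the lemma as reproduced here need correcting. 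As written, your plan establishes an inequality of the right form but not the stated inequality, and you should not leave this to a closing remark about ``routine'' bookkeeping.
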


The second result is from \cite[Corollary 4.3]{Wang}.
\begin{prop}\label{prop:Wang}
Assume that there exists $\kappa_0,\kappa_1,\kappa_2>0$ such that for all $\nu,\mu\in\mathcal P_2(\R^d) $ and all $x,y\in\R^d$,
\[|\na E_{\mu}(0)|^2 + \sigma^2  \leqslant \kappa_0\po 1+ \int_{\R^d} |x|^2 \mu(\dd x)\pf  \]
and
\begin{equation}
\label{eq:Wang2}
-2\po \na E_{\mu}(x) - \na E_{\nu}(y) \pf \cdot (x-y) \leqslant \kappa_1 |x-y|^2 + \kappa_2 |x-y|\mathcal W_2(\nu,\mu)\,.
\end{equation}
Then, given two solutions $(\rho_t)_{t\geqslant 0}$ and $(\rho_t')_{t\geqslant 0}$ of \eqref{eq:granular_media_2}, for all $t>0$,
\[\mathcal H\po \rho_t|\rho_t'\pf \leqslant s_t\mathcal W_2^2 (\rho_0,\rho_0')\,,\]
where
\begin{equation*}
s_t=\frac{1}{\sigma^2 } \po \frac{\kappa_1}{1-e^{-\kappa_1 t}} + \frac12 t\kappa_2^2 e^{2(\kappa_1+\kappa_2)t}\pf\,. 
\end{equation*}
\end{prop}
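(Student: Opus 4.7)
The plan is to use the coupling-by-Girsanov technique: one constructs, on a common probability space, two coupled trajectories realizing the marginal flows $\rho_\cdot$ and $\rho_\cdot'$, with an additional drift $u_s$ that forces the two processes to coalesce by time $t$. The relative entropy between the terminal marginals is then controlled by the $L^2$-cost of $u$ through Girsanov's theorem.

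First I would introduce the probabilistic representations. On a probability space $(\Omega,\mathbb{P})$ carrying a Brownian motion $B$, let $X_s$ solve $\dd X_s = -\na E_{\rho_s}(X_s)\dd s + \sigma\sqrt{2}\dd B_s$ with $X_0 \sim \rho_0$, so that $\mathrm{law}(X_s)=\rho_s$, and take $(X_0,Y_0)$ realizing an optimal $\mathcal{W}_2$-coupling of $(\rho_0,\rho_0')$. On the same space, define an auxiliary process $\bar Y_s$ by
\[\dd \bar Y_s = -\na E_{\rho_s'}(\bar Y_s)\dd s + \sigma\sqrt{2}\dd B_s + u_s\dd s,\qquad \bar Y_0 = Y_0,\]
where $u_s$ is progressively measurable and chosen so that $\xi_s := \bar Y_s - X_s$ satisfies $\xi_t = 0$ almost surely. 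Since the Brownian parts cancel, $\xi_s'= u_s + \na E_{\rho_s}(X_s) - \na E_{\rho_s'}(\bar Y_s)$, and by construction $\mathrm{law}_{\mathbb{P}}(\bar Y_t)=\mathrm{law}_{\mathbb{P}}(X_t)=\rho_t$. Under the Girsanov measure $\dd \mathbb{Q} = \exp(-\int_0^t \tfrac{u_s}{\sigma\sqrt{2}}\cdot\dd B_s - \tfrac{1}{4\sigma^2}\int_0^t |u_s|^2 \dd s) \dd \mathbb{P}$, the drift $u_s$ is absorbed and $\bar Y$ becomes a weak solution of the McKean--Vlasov SDE associated with $(\rho_s')$; strong uniqueness then yields $\mathrm{law}_{\mathbb{Q}}(\bar Y_t)=\rho_t'$. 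The data-processing inequality gives
\[\mathcal{H}(\rho_t|\rho_t') \leqslant \mathcal{H}(\mathbb{P}|\mathbb{Q}) = \frac{1}{4\sigma^2}\mathbb{E}\int_0^t |u_s|^2 \dd s.\]

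Next, I would choose $u_s$ (equivalently, $\xi_s$) to minimize this cost. Taking a deterministic profile $\xi_s = \phi_s (Y_0-X_0)$ with $\phi_0=1$, $\phi_t=0$, the hypothesis \eqref{eq:Wang2} applied at $(x,y)=(\bar Y_s, X_s)$ and $(\mu,\nu)=(\rho_s',\rho_s)$ controls the inner product $-2\xi_s \cdot (\na E_{\rho_s'}(\bar Y_s)-\na E_{\rho_s}(X_s))$ by $\kappa_1 |\xi_s|^2 + \kappa_2 |\xi_s|\mathcal{W}_2(\rho_s,\rho_s')$. A separate application of \eqref{eq:Wang2} to the purely synchronous coupling (without $u$) and Itô--Grönwall on $\mathbb{E}|X_s-Y_s|^2$ yields the a priori control $\mathcal{W}_2(\rho_s,\rho_s')^2 \leqslant e^{(\kappa_1+\kappa_2)s}\mathbb{E}|Y_0-X_0|^2$, which plugged back produces the exponential factor $e^{2(\kappa_1+\kappa_2)t}$. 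Choosing the exponential profile $\phi_s = (e^{-\kappa_1 s}-e^{-\kappa_1 t})/(1-e^{-\kappa_1 t})$ one splits $\mathbb{E}\int_0^t |u_s|^2 \dd s$ into a pure coalescence piece $\propto \kappa_1/(1-e^{-\kappa_1 t})\cdot \mathcal{W}_2^2(\rho_0,\rho_0')$ coming from $|\phi_s'|^2$ and a Wasserstein cross-term $\propto t\kappa_2^2 e^{2(\kappa_1+\kappa_2)t}\cdot \mathcal{W}_2^2(\rho_0,\rho_0')$ from the drift mismatch, yielding the stated form of $s_t$.

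The main obstacle is the decoupling in the last step: the square $|u_s|^2=|\xi_s' + \Delta_s|^2$ with $\Delta_s := \na E_{\rho_s}(X_s)-\na E_{\rho_s'}(\bar Y_s)$ contains a cross-term whose sign is not directly controlled by \eqref{eq:Wang2} — indeed, \eqref{eq:Wang2} provides only an inner-product bound on $\Delta_s \cdot \xi_s$, not a pointwise norm estimate on $\Delta_s$. Handling this requires either absorbing the cross-term into $|\xi_s|^2$ via a Young inequality with weights calibrated by $\kappa_1$ (the origin of the factor $\kappa_1/(1-e^{-\kappa_1 t})$ as the optimal tradeoff), or running the coupling in two phases, pure synchronous then forced coalescence. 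A secondary technical point is justifying Novikov's condition for the Girsanov exponential, which follows from the linear-in-$\int |x|^2 \mu(\dd x)$ growth of $|\na E_\mu(0)|^2$ assumed in the proposition together with standard uniform second-moment estimates on $X_s$ and $\bar Y_s$.
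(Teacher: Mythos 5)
The paper does not prove this statement; it cites \cite[Corollary 4.3]{Wang} directly. So there is no in-paper proof to compare against, and your attempt should be judged on its own. Your Girsanov coupling (``coupling by change of measure'') is indeed the right family of techniques — it is precisely what Wang uses — but your specific design has a genuine gap, and the fixes you sketch do not close it.

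The gap you correctly flag is real, but deeper than you acknowledge. With $\bar Y_s$ carrying the drift $-\nabla E_{\rho'_s}$ under $\mathbb{P}$ and $\xi_s := \bar Y_s - X_s$ forced to the deterministic profile $\phi_s(Y_0-X_0)$, the control drift $u_s = \xi'_s - \Delta_s$ with $\Delta_s = \nabla E_{\rho_s}(X_s)-\nabla E_{\rho'_s}(\bar Y_s)$ produces the term $|\Delta_s|^2$ in $|u_s|^2$. A Young inequality weighted by $\kappa_1$ cannot absorb it, and a two-phase (synchronous-then-coalescing) coupling has the same problem in the coalescing phase: the piece $\nabla E_{\rho_s}(X_s)-\nabla E_{\rho_s}(\bar Y_s)$ inside $\Delta_s$ is \emph{not} controlled in norm by the one-sided bound~\eqref{eq:Wang2}, only in inner product against $\xi_s$. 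No calibration of the profile $\phi$ removes a genuinely unbounded quantity.

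Two ideas are needed. First, you are missing that~\eqref{eq:Wang2} \emph{does} yield a pointwise Lipschitz-in-measure estimate: taking $y = x - \varepsilon v$ with $|v|=1$, dividing by $2\varepsilon$ and letting $\varepsilon \to 0^+$ gives $-(\nabla E_\mu(x)-\nabla E_\nu(x))\cdot v \leqslant \tfrac{\kappa_2}{2}\mathcal W_2(\mu,\nu)$ for every unit $v$, hence $\|\nabla E_\mu - \nabla E_\nu\|_\infty \leqslant \tfrac{\kappa_2}{2}\mathcal W_2(\mu,\nu)$. Second, and more structurally, the bridge process $\bar Y$ must be given under $\mathbb{P}$ the \emph{same} drift $-\nabla E_{\rho_s}$ as $X$ (not the $\rho'_s$-drift), plus a coalescing drift $v_s$. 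Then the Girsanov integrand splits as $w_s = [\nabla E_{\rho'_s}(\bar Y_s)-\nabla E_{\rho_s}(\bar Y_s)] + v_s$: the first bracket is bounded pointwise by $\tfrac{\kappa_2}{2}\mathcal W_2(\rho_s,\rho'_s)$ (then by $\tfrac{\kappa_2}{2}e^{(\kappa_1+\kappa_2)s/2}\mathcal W_2(\rho_0,\rho'_0)$ via your synchronous-coupling Grönwall estimate), and $|v_s|^2$ is controlled by a Grönwall argument on $|\bar Y_s - X_s|$ that only invokes~\eqref{eq:Wang2} with $\mu=\nu=\rho_s$, i.e.\ one-sided Lipschitz at a single fixed measure. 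In your design the uncontrollable one-sided piece $|\nabla E_{\rho_s}(X_s)-\nabla E_{\rho_s}(\bar Y_s)|^2$ appears as a raw norm; with the $\rho_s$-drift design it never does. Rerunning your computation with this corrected drift assignment recovers the stated $s_t$ up to constants.
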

Notice that, under the uniform Lipschitz condition \eqref{eq:Lipshitz}, since we can bound $|\na E_{\mu}(0)|\leqslant |\na E_{\delta_0}(0)| + L \mathcal W_2(\mu,\delta_0)$ for any $\mu\in\mathcal P_2(\R^d)$, the assumptions of Proposition~\ref{prop:Wang} are fulfilled if additionally there exists $\kappa_1>0$ such that  the one-sided Lipschitz condition
\begin{equation}
\label{eq:onesided}
\forall \mu\in\mathcal P_2(\R^d),x,y\in\R^d,\qquad 2\po \na E_{\mu}(x) - \na E_{\mu}(y) \pf \cdot (x-y) \geqslant  - \kappa_1 |x-y|^2\tag{\textbf{o-s-Lip}}
\end{equation}
holds, and then we can take $\kappa_2=2 L$ to have \eqref{eq:Wang2}.

\begin{cor}\label{Cor:FW2}
Assume \eqref{eq:convexity-c_W2}, \eqref{eq:Lipshitz}, \eqref{eq:onesided} and \eqref{eq:LSIlineaire_locale} for some $\lambda, L,\kappa_1,\eta>0$. Then, for all $(\rho_t)_{t\geqslant 0}$ solution to \eqref{eq:granular_media_2}, all $\rho_*\in\mathcal K$ and all $t>0$,
\[\mathcal F(\rho_t) - \mathcal F(\rho_*) \leqslant q_t \mathcal W_2^2(\rho_0,\rho_*)\]
with 
\begin{equation}
\label{eq:qt}
q_t =  \po 1+ \eta L+ \frac{4 \eta\lambda}{\sigma^2} + \frac{L^2 \eta^2}{2}\pf \po \frac{\kappa_1}{1-e^{-\kappa_1 t}} + \rev{2} tL^2 e^{(2\kappa_1+4L)t}\pf\,.
\end{equation}
\end{cor}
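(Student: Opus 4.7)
The plan is to chain the four listed hypotheses through the preceding lemmas so as to bound $\mathcal F(\rho_t) - \mathcal F(\rho_*)$ by $\mathcal H(\rho_t|\rho_*)$, and then to convert this relative entropy into $\mathcal W_2^2(\rho_0,\rho_*)$ by exploiting that $\rho_*$ is itself a (constant) solution of \eqref{eq:granular_media_2}.

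First, since \eqref{eq:convexity-c_W2} is \eqref{eq:convexity-c} with $\mathcal C = \lambda \mathcal W_2^2$, I would apply \eqref{eq:Fmu0Fmu1<} of Lemma~\ref{lem:sandwich} to $\mu_0 = \rho_t$ and $\mu_1 = \rho_*$ to obtain
\[
\mathcal F(\rho_t) - \mathcal F(\rho_*) \leqslant \sigma^2 \mathcal H\bigl(\rho_t|\Gamma(\rho_t)\bigr) + \lambda \mathcal W_2^2(\rho_t,\rho_*).
\]
Then, Lemma~\ref{lem:Songbo} (which uses \eqref{eq:Lipshitz} and \eqref{eq:LSIlineaire_locale}) gives
\[
\mathcal H\bigl(\rho_t|\Gamma(\rho_t)\bigr) \leqslant \Bigl(1 + \eta L + \tfrac{L^2\eta^2}{2}\Bigr)\mathcal H(\rho_t|\rho_*),
\]
while the Talagrand inequality \eqref{eq:NLT2} (implied by \eqref{eq:LSIlineaire_locale} via \cite{OttoVillani}), applied to $\Gamma(\rho_*) = \rho_*$ since $\rho_*\in\mathcal K$, yields $\mathcal W_2^2(\rho_t,\rho_*) \leqslant 4\eta\,\mathcal H(\rho_t|\rho_*)$. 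Combining these three bounds, the prefactor in the first parenthesis of $q_t$ naturally appears:
\[
\mathcal F(\rho_t) - \mathcal F(\rho_*) \leqslant \sigma^2 \Bigl(1 + \eta L + \tfrac{4\eta\lambda}{\sigma^2} + \tfrac{L^2\eta^2}{2}\Bigr) \mathcal H(\rho_t|\rho_*).
\]

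The remaining step is to control $\mathcal H(\rho_t|\rho_*)$ by $\mathcal W_2^2(\rho_0,\rho_*)$. For this I would apply Proposition~\ref{prop:Wang} with $(\rho_t')_{t\geqslant 0} \equiv \rho_*$, which is a stationary (hence constant) solution of~\eqref{eq:granular_media_2}. As indicated in the remark following Proposition~\ref{prop:Wang}, under \eqref{eq:Lipshitz} and \eqref{eq:onesided} the hypothesis \eqref{eq:Wang2} is satisfied with the given $\kappa_1$ and $\kappa_2 = 2L$: splitting $\nabla E_\mu(x) - \nabla E_\nu(y) = (\nabla E_\mu(x) - \nabla E_\mu(y)) + (\nabla E_\mu(y) - \nabla E_\nu(y))$, the first piece is handled by \eqref{eq:onesided} and the second by the Cauchy--Schwarz bound $|\nabla E_\mu(y) - \nabla E_\nu(y)| \leqslant L\,\mathcal W_2(\mu,\nu)$. (Assumption~\ref{assu:loc-eq} and \eqref{eq:Lipshitz} also trivially yield the linear-growth bound on $|\nabla E_\mu(0)|^2 + \sigma^2$ required by the proposition.) This gives
\[
\mathcal H(\rho_t|\rho_*) \leqslant \frac{1}{\sigma^2}\Bigl(\frac{\kappa_1}{1-e^{-\kappa_1 t}} + \tfrac{1}{2}\,t\,\kappa_2^2\,e^{2(\kappa_1+\kappa_2)t}\Bigr)\,\mathcal W_2^2(\rho_0,\rho_*),
\]
and substituting $\kappa_2 = 2L$ produces precisely the second parenthesis in the definition of $q_t$ (the $\sigma^2$ from the first estimate and the $1/\sigma^2$ here cancel).

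There is no real obstacle: everything amounts to correctly assembling the previous results. The only point that requires a small amount of care is the verification that $\rho_*$ satisfies the hypotheses of Proposition~\ref{prop:Wang} as a solution of \eqref{eq:granular_media_2} (immediate, since $\rho_*\in\mathcal K$ means $\partial_t \rho_* = 0$ is a valid solution) and the tracking of the multiplicative constants so that the sum $1+\eta L + 4\eta\lambda/\sigma^2 + L^2\eta^2/2$ in the prefactor emerges exactly from the combination of Lemma~\ref{lem:Songbo} with Talagrand's inequality. Gathering the two displayed bounds yields the announced estimate with the stated $q_t$.
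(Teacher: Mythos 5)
Your proof follows exactly the same route as the paper: \eqref{eq:Fmu0Fmu1<} from Lemma~\ref{lem:sandwich} with $\mathcal C = \lambda\mathcal W_2^2$, then Lemma~\ref{lem:Songbo} together with Talagrand's inequality \eqref{eq:NLT2} to absorb everything into $\mathcal H(\rho_t|\rho_*)$, and finally Proposition~\ref{prop:Wang} applied with the constant solution $\rho'_t\equiv\rho_*$ and $\kappa_2 = 2L$. One small imprecision: substituting $\kappa_2 = 2L$ into $s_t$ actually gives the second-term coefficient $\tfrac12 t(2L)^2 = 2tL^2$, not the $\tfrac12 tL^2$ appearing in \eqref{eq:qt}, so the claim that this "produces precisely the second parenthesis" is not quite exact — but this factor-of-four mismatch is present in the paper's own statement of $q_t$ versus its $s_t$, and does not affect the validity of the argument.
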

Notice that $q_t$ is of order $1/t$ as $t\rightarrow 0$. See \cite[Theorem 4.0.4]{ambrosio2005gradient} for a similar result.

\begin{proof}
Using \eqref{eq:Fmu0Fmu1<} in Lemma~\ref{lem:sandwich} with $\mathcal C = \lambda \mathcal W_2^2$ and then Lemma~\ref{lem:Songbo} and the Talagrand inequality \eqref{eq:NLT2} for $\rho_* = \Gamma(\rho_*)$ implied by \eqref{eq:LSIlineaire_locale},
\begin{eqnarray*}
 \mathcal F(\rho_t) -  \mathcal F(\rho_*) &  \leqslant & \sigma^2 \mathcal H\po \rho_t|\Gamma(\rho_t)\pf+   \lambda \mathcal W_2^2(\rho_t,\rho_*) \\
 & \leqslant & \sigma^2  \po 1+ \eta L+ \frac{4 \eta\lambda}{\sigma^2} + \frac{L^2 \eta^2}{2}\pf \mathcal H(\rho_t|\rho_*)\,.
\end{eqnarray*} 
Since $t\mapsto \rho_*$ solves \eqref{eq:granular_media_2}, conclusion follows from Proposition~\ref{prop:Wang}.
\end{proof}

\subsection{Conclusion}\label{sec:mainthm}

For $\mu\in\mathcal P_2(\R^d)$ and $r>0$, write $\mathcal B_{\mathcal W_2}(\mu,r) = \{\nu\in\mathcal P_2(\R^d),\mathcal W_2(\nu,\mu)\leqslant r\}$.

\begin{assu}\label{assu:total}
The conditions \eqref{eq:convexity-c_W2}, \eqref{eq:Lipshitz}, \eqref{eq:onesided} and \eqref{eq:LSIlineaire_locale} hold for some $\lambda, L,\kappa_1,\eta>0$, and \eqref{eq:loc_LSInonlineaire} holds on some non-empty set  $\mathcal A \subset \mathcal P_2(\R^d)$ for some $\oeta>0$. Furthermore  there exist $\rho_* \in \mathcal A\cap \mathcal K$ and $\delta>0$ such that $\mathcal B_{\mathcal W_2}(\rho_*,\delta) \subset \mathcal A$.
\end{assu}

\begin{thm}\label{thm:main}
Under Assumption \ref{assu:total}, set 
\[\delta' =  \frac{\delta }{2( 2 \sqrt{\oeta q_1} + e^{(\kappa_1/2+L)})}\,,  \]
where $q_1$ is given by \eqref{eq:qt} (with $t=1$). Then, for all $\rho_0 \in \mathcal B_{\mathcal W_2}(\rho_*,\delta')$, $T_{\mathcal A}(\rho_0)=\infty$ and  in particular
\[\mathcal F_{\mathcal A}(\rho_t)    \leqslant   e^{-t/\oeta}   \mathcal F_{\mathcal A}(\rho_0)  \]
for all $t\geqslant 0$. Furthermore, if we assume additionally that  $\mathcal B_{\mathcal W_2}(\rho_*,\delta)\cap \mathcal K=\{\rho_*\}$, then:
\begin{itemize}
\item The following non-linear Talagrand inequality holds:
\begin{equation}
\label{eq:thmTalagrand}
\mathcal W_2^2(\rho_0,\rho_*) \leqslant 4 \oeta \mathcal F_{\mathcal A}(\rho_0)\,. 
\end{equation}
\item Setting $C= e^{1/\oeta} \max(  4 \oeta q_1,e^{\kappa_1+2L} ) $, for all $t\geqslant 0$,
\begin{eqnarray}
\mathcal W_2^2(\rho_t,\rho_*)  &\leqslant &  C e^{-t/\oeta}  \mathcal W_2^2(\rho_0,\rho_*)\label{eq:W2Thm}\\
\mathcal F_{\mathcal A}(\rho_t)  & \leqslant &e^{-t/\oeta} \min \po  \mathcal F_{\mathcal A}(\rho_0)\ ,\ q_{\min(t,1)} e^{1/\oeta}  \mathcal W_2^2(\rho_0,\rho_*)\pf \label{eq:FdecayThm} \\
\sigma^2 \mathcal H(\rho_t|\rho_*) & \leqslant & \mathcal F_{\mathcal A}(\rho_t) + \lambda \mathcal W_2^2(\rho_t,\rho_*)\,.\label{eq:HThm}
\end{eqnarray}
\end{itemize}

\end{thm}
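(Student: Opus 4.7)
The plan is to bootstrap three ingredients already at hand: a short-time synchronous-coupling estimate for $\mathcal{W}_2(\rho_t,\rho_*)$, the smoothing bound of Corollary~\ref{Cor:FW2}, and the dissipation inequalities of Lemma~\ref{lem:localLSI_consequence}. Fix $\rho_0\in\mathcal{B}_{\mathcal{W}_2}(\rho_*,\delta')$. Driving the (time-inhomogeneous) linear SDEs with drifts $-\na E_{\rho_t}$ and $-\na E_{\rho_*}$ by the same Brownian motion from an optimal coupling of $(\rho_0,\rho_*)$, an elementary Gr\"onwall estimate using \eqref{eq:onesided} and \eqref{eq:Lipshitz} yields
\begin{equation*}
\mathcal{W}_2^2(\rho_t,\rho_*)\leqslant e^{(\kappa_1+2L)t}\mathcal{W}_2^2(\rho_0,\rho_*),\qquad t\geqslant 0.
\end{equation*}
In particular $\mathcal{W}_2(\rho_1,\rho_*)\leqslant e^{\kappa_1/2+L}\delta'<\delta$, so $\rho_1\in\mathcal{A}$; together with Corollary~\ref{Cor:FW2} and \eqref{eq:rho*min}, this gives the finite bound $\mathcal{F}_{\mathcal{A}}(\rho_1)\leqslant q_1(\delta')^2$.

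For $t\in[1,T_{\mathcal{A}}(\rho_0))$, Lemma~\ref{lem:localLSI_consequence} applied from time $1$ yields $\mathcal{W}_2(\rho_t,\rho_1)\leqslant 2\sqrt{\oeta\,\mathcal{F}_{\mathcal{A}}(\rho_1)}\leqslant 2\sqrt{\oeta q_1}\,\delta'$, so by the triangle inequality and the definition of $\delta'$,
\begin{equation*}
\mathcal{W}_2(\rho_t,\rho_*)\leqslant \bigl(2\sqrt{\oeta q_1}+e^{\kappa_1/2+L}\bigr)\delta'=\delta/2<\delta,
\end{equation*}
keeping $\rho_t$ strictly inside $\mathcal{B}_{\mathcal{W}_2}(\rho_*,\delta)\subset\mathcal{A}$. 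Combined with the $\mathcal{W}_2$-continuity of $t\mapsto\rho_t$ from Assumption~\ref{assu:pde}, this rules out $T_{\mathcal{A}}(\rho_0)<\infty$, and Lemma~\ref{lem:localLSI_consequence} then delivers the global bound $\mathcal{F}_{\mathcal{A}}(\rho_t)\leqslant e^{-t/\oeta}\mathcal{F}_{\mathcal{A}}(\rho_0)$ (trivial when $\mathcal{F}_{\mathcal{A}}(\rho_0)=\infty$).

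Assuming in addition $\mathcal{B}_{\mathcal{W}_2}(\rho_*,\delta)\cap\mathcal{K}=\{\rho_*\}$, applying Lemma~\ref{lem:localLSI_consequence} from any time $s\geqslant 1$ gives $\mathcal{W}_2(\rho_u,\rho_s)\leqslant 2\sqrt{\oeta\,\mathcal{F}_{\mathcal{A}}(\rho_s)}$ for $u\geqslant s$, and the exponential decay of $\mathcal{F}_{\mathcal{A}}(\rho_s)$ makes $(\rho_t)$ Cauchy in $(\mathcal{P}_2(\R^d),\mathcal{W}_2)$; its limit $\rho_\infty$ lies in $\mathcal{B}_{\mathcal{W}_2}(\rho_*,\delta/2)$. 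The integrated dissipation $\int_1^\infty\mathcal{I}(\rho_s|\Gamma(\rho_s))\,\dd s\leqslant\mathcal{F}_{\mathcal{A}}(\rho_1)/\sigma^4<\infty$ provides a subsequence $t_n\to\infty$ with $\mathcal{I}(\rho_{t_n}|\Gamma(\rho_{t_n}))\to 0$; the continuity of $\rho\mapsto\na E_\rho$ from Assumption~\ref{assu:lfdE}(i) together with lower semicontinuity of the Fisher information forces $\mathcal{I}(\rho_\infty|\Gamma(\rho_\infty))=0$, so $\rho_\infty\in\mathcal{K}\cap\mathcal{B}_{\mathcal{W}_2}(\rho_*,\delta)=\{\rho_*\}$. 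Letting $t\to\infty$ in $\mathcal{W}_2(\rho_0,\rho_t)\leqslant 2\sqrt{\oeta\,\mathcal{F}_{\mathcal{A}}(\rho_0)}$ yields \eqref{eq:thmTalagrand}, and the same argument applied to $\rho_t$ as initial condition gives $\mathcal{W}_2^2(\rho_t,\rho_*)\leqslant 4\oeta\,\mathcal{F}_{\mathcal{A}}(\rho_t)$. Combining this with the exponential decay of $\mathcal{F}_{\mathcal{A}}$ and the smoothing bound of Corollary~\ref{Cor:FW2} applied at time $\min(t,1)$ gives \eqref{eq:W2Thm} and \eqref{eq:FdecayThm} after routine bookkeeping with the value of $C$, and \eqref{eq:HThm} is exactly \eqref{eq:Fmu0Fmu1>} of Lemma~\ref{lem:sandwich} with $\mathcal{C}=\lambda\mathcal{W}_2^2$ and $\mathcal{F}(\rho_*)=\inf_{\mathcal{A}}\mathcal{F}$.

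The main obstacle will be the identification of the Cauchy limit $\rho_\infty$ as a critical point: lower semicontinuity of $\nu\mapsto\mathcal{I}(\nu|\Gamma(\nu))$ is not immediate because the reference measure itself moves with $\nu$, and one has to combine the $\mathcal{W}_2$ convergence $\rho_{t_n}\to\rho_\infty$ with sufficient regularity of $\rho\mapsto\na E_\rho$ (from Assumption~\ref{assu:lfdE}) to pass to the limit in $\int|\na\ln(\rho_{t_n}/\Gamma(\rho_{t_n}))|^2\,\dd\rho_{t_n}$.
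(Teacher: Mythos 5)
Your proof of the first part (up to $T_{\mathcal A}(\rho_0)=\infty$ and the decay of $\mathcal F_{\mathcal A}(\rho_t)$) is essentially identical to the paper's: the same synchronous-coupling estimate \eqref{eq:W2rhotrho*}, the same short-time use of Corollary~\ref{Cor:FW2} to get $\mathcal F_{\mathcal A}(\rho_1)\leqslant q_1(\delta')^2$, the same triangle inequality/continuity contradiction argument, and the same application of Lemma~\ref{lem:localLSI_consequence}.

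Where you genuinely deviate is in identifying the long-time limit $\rho_\infty$ with $\rho_*$. The paper handles this in one line by invoking the LaSalle invariance principle established in~\cite{Carrillo}: since $\mathcal F$ is a strict Lyapunov function, $\mathcal W_2(\rho_t,\mathcal K)\to 0$, and since the trajectory stays within $\mathcal B_{\mathcal W_2}(\rho_*,\delta/2)$ it cannot approach any point of $\mathcal K\setminus\{\rho_*\}$. You instead argue directly that $(\rho_t)_{t\geqslant 1}$ is Cauchy in $(\mathcal P_2,\mathcal W_2)$ (correct, and a nice by-product of the exponential decay of $\mathcal F_{\mathcal A}$ combined with Lemma~\ref{lem:localLSI_consequence}), extract a subsequence along which $\mathcal I(\rho_{t_n}|\Gamma(\rho_{t_n}))\to 0$ via the integrated dissipation, and then claim that lower semicontinuity forces $\mathcal I(\rho_\infty|\Gamma(\rho_\infty))=0$. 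That last step is a genuine gap, and you acknowledge it yourself: lower semicontinuity of $\nu\mapsto\mathcal I(\nu|\mu)$ for a \emph{fixed} reference measure $\mu$ is classical, but here the reference measure $\Gamma(\rho_{t_n})$ moves with $n$, and Assumption~\ref{assu:lfdE}(i) only gives pointwise continuity of $(x,\rho)\mapsto E_\rho(x)$, not the stronger convergence (e.g.\ of $\na E_{\rho_{t_n}}$ in a norm matching the Fisher-information integrand against $\rho_{t_n}$) that a direct limit passage would need. Without filling in this step, your argument does not establish $\rho_\infty\in\mathcal K$, and hence does not justify $\rho_\infty=\rho_*$. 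This is precisely the kind of compactness/lower-semicontinuity machinery that the invariance principle of~\cite{Carrillo} packages cleanly, which is why the paper cites it rather than reproving it; you would need to either supply the semicontinuity lemma (under hypotheses the paper does not currently require) or simply invoke~\cite{Carrillo} as the paper does. Everything downstream of that identification --- the Talagrand inequality, \eqref{eq:W2Thm}, \eqref{eq:FdecayThm}, \eqref{eq:HThm} --- you describe correctly and matches the paper.
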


 The last inequality on the relative entropy is just \eqref{eq:Fmu0Fmu1>} in Lemma~\ref{lem:sandwich}, we simply recall it as it gives here the exponential convergence of $\mathcal H(\rho_t|\rho_*)$ (hence of $\|\rho_t-\rho_*\|_{TV}$) to zero.

 \rev{The relation between the transport inequality~\eqref{eq:thmTalagrand} and  PL inequalities is studied in more general settings for convex functionals in \cite{BlanchetBolte}. }
 
 \begin{rem}\label{rem:Ainfini}
 Write $\mathcal A_\infty(\rho_*) = \{\rho_0\in\mathcal A: T_A(\rho_0)=\infty\text{ and } \mathcal W_2(\rho_t,\rho_*)\rightarrow 0 \text{ as }t\rightarrow \infty\}$. Then it is clear by following the proof of Theorem~\ref{thm:main} that its conclusion (namely \eqref{eq:thmTalagrand}, \eqref{eq:W2Thm} and \eqref{eq:FdecayThm}) hold for any initial condition $\rho_0\in\mathcal A_{\infty}(\rho_*)$. Moreover, for any $\rho_0 \in\mathcal P_2(\R^d)$ such that $\mathcal W_2(\rho_t,\rho_*)\rightarrow 0$ as $t\rightarrow \infty$, there exists a $t_*>0$ such that $\rho_{t_*} \in \mathcal B_{\mathcal W_2}(\rho_*,\delta) \subset \mathcal A_\infty(\rho_*)$, from which there exists $C_0>0$ such that for all $t\geqslant 1$,
 \begin{equation}
 \label{eq:convergence_temps-long}
 \mathcal W_2^2(\rho_t,\rho_*) + \mathcal H(\rho_t|\rho_*) + \mathcal F(\rho_t) - \mathcal F(\rho_*) \leqslant C_0 e^{-t/\oeta}\,.
 \end{equation}
 \end{rem}

\begin{proof}
Given a solution $(\rho_t)_{t\geqslant 0}$ of \eqref{eq:granular_media_1}, we consider the time-inhomogeneous Markov diffusion process 
\[\dd X_t = -\na E_{\rho_t} (X_t) \dd t +\sqrt{2}\sigma \dd B_t\]
where $B$ is a Brownian motion  and $X_0 $ is distributed according to $\rho_0$, so that $X_t \sim \rho_t$ for all $t\geqslant 0$. Existence of strong solutions for this SDE is justified by the fact $(t,x)\mapsto \na E_{\rho_t}$ is continuous in time (by continuity of $t\mapsto \rho_t$ and \eqref{eq:Lipshitz}) and $\mathcal C^1$ in $x$, and explosion in finite time is prevented by \eqref{eq:onesided}.  Considering a synchronous coupling $(X,Y)$ (i.e. using the same Brownian motion for two processes) of such diffusions, the first one being associated with an arbitrary solution $\rho$, the second being associated to the stationary solution $t\mapsto \rho_*$, we get that
\[\frac{\dd}{\dd t} |X_t-Y_t|^2 = -2 (X_t-Y_t) \cdot \po \na E_{\rho_t} (X_t) - \na E_{\rho_*} (Y_t)\pf \leqslant (\kappa_1+L) |X_t-Y_t|^2 + L \mathcal W_2^2\po \rho_t,\rho_*\pf\,, \]
where we used \eqref{eq:Lipshitz} and \eqref{eq:onesided}. Taking the expectation, using the Gronwall Lemma, that $\mathcal W_2^2(\rho_t,\rho_*) \leqslant \mathbb E \po|X_t-Y_t|^2\pf$ and taking the infimum over the coupling of the initial conditions, we obtain
\begin{equation}
\label{eq:W2rhotrho*}
\mathcal W_2(\rho_t, \rho_*) \leqslant e^{(\kappa_1/2+L)t} \mathcal W_2(\rho_0, \rho_*)\,.
\end{equation}
In view of the definition of $\delta'$, this implies that for $\rho_0 \in \mathcal B_{\mathcal W_2}(\rho_*,\delta')$, $T_{\mathcal A}(\rho_0)\geqslant 1$ and $\rho_1\in\mathcal A$. On the other hand, using Corollary~\ref{Cor:FW2} and the decay of the free energy  along time, for all $t\geqslant 1$,
\[\mathcal F(\rho_t) - \mathcal F(\rho_*) \leqslant q_1 (\delta')^2\,.\]
Hence, for all $t \in [1,T_{\mathcal A}(\rho_0)]$,  using \eqref{eq:lem-W2transport}, 
\[\mathcal W_2(\rho_t,\rho_*) \leqslant \mathcal W_2(\rho_t,\rho_1) + \mathcal W_2(\rho_1,\rho_*) \leqslant    \delta' \po  2 \sqrt{\oeta q_1} + e^{(\kappa_1/2+L)} \pf  = \frac{\delta}{2} \,.\]
By contradiction, since $t\mapsto \mathcal W_2(\rho_t,\rho_*)$ is continuous, we get that $T_{\mathcal A}(\rho_0)=\infty$.

Now, assume that $\mathcal B_{\mathcal W_2}(\rho_*,\delta)\cap \mathcal K=\{\rho_*\}$. Since Equation~\eqref{eq:granular_media_2} admits $\mathcal F$ as a strict Lyapunov function, the LaSalle
invariance principle established in \cite{Carrillo} shows that the $\mathcal W_2$ distance between $\rho_t$ and $\mathcal K$ vanishes as $t\rightarrow \infty$. Since the trajectory stays within $\mathcal B_{\mathcal W_2}(\rho_*,\delta/2)$ and thus at a distance at least $\delta/2$ of $\mathcal K\setminus\{\rho_*\}$, this implies the convergence towards $\rho_*$. We can then let $t\rightarrow \infty$ in \eqref{eq:lem-W2transport} to get \eqref{eq:thmTalagrand}, that we use then to bound, for $t\geqslant 1$,
\[\mathcal W_2^2(\rho_t,\rho_*) \leqslant 4 \oeta \mathcal F_{\mathcal A}(\rho_t) \leqslant 4 \oeta e^{-(t-1)/\oeta}\mathcal F_{\mathcal A}(\rho_1)  \leqslant 4 \oeta e^{-(t-1)/\oeta} q_1\mathcal W_2^2(\rho_t,\rho_*)\,.\] 
For $t\leqslant 1$, we simply use \eqref{eq:W2rhotrho*}. The bound on $\mathcal F_{\mathcal A}(\rho_t)$ is straightforward from \eqref{eq:lem-F-decay} and Corollary~\ref{Cor:FW2}.
\end{proof}

\section{Local non-linear LSI in a parametric case}\label{sec:LSIparametric}

In this section we focus on the settings of Example~\ref{ex:hilbert}, where we recall that
\begin{equation}
    \label{eq:EHilbert}
    \mathcal E(\mu) = \int_{\R^d} V_0(x) \mu(\dd x) +  R\po \int_{\R^d} \varphi(x) \mu(\dd x)\pf.
\end{equation}
We assume that $V_0\in\mathcal C^2(\R^d,\R)$, $R\in\mathcal C^1(\hil,\R)$ and $\varphi\in\mathcal{C}^2(\R^d, \hil)$. In this case, denoting by $\na_{\hil}$  and $\langle \cdot,\cdot \rangle_{\hil}$ the gradient and scalar product over $\hil$,
\begin{equation*}
    E_\rho(x) = V_0(x) + \langle \nabla_{\hil} R(\varphi(\rho)), \varphi(x)\rangle_{\hil},
\end{equation*}
with $\varphi(\rho):=\int_{\R^d} \varphi \rho \in \hil$,
and Assumption~\ref{assu:lfdE} is satisfied if, for any $x \in \R^d$, $\|\varphi(x)\|_{\hil} \leq C(1+|x|^2)$.

The general idea that we develop in this section is that, since the non-linearity is somehow parametrized by $\varphi(\rho)$, information for the Wasserstein gradient flow can be deduced from standard analysis of fixed point or optimization on Hilbert spaces. General considerations are gathered in Section~\ref{subsec:Hilbert}, which are then applied to the granular media case \eqref{eq:def_F} in Section~\ref{subsec:LSIgranular}.

\begin{rem}
If $R$ is convex then $\mathcal E$ is functional convex. As discussed in Section~\ref{sec:convex_interaction_faible}, thanks to Lemma~\ref{lem:sandwich}, in that case \eqref{eq:LSIlineaire_locale} implies \eqref{eq:LSInonlineaire} and then global convergence at constant rate towards a unique stationary solution.  Hence, this is not the case we are interested in.
\end{rem}

\subsection{General results}\label{subsec:Hilbert}
\subsubsection{The associated fixed-point problem}

In the case \eqref{eq:EHilbert}, we get that $\Gamma(\rho) = \rho_{\varphi(\rho)}$ where, for $\psi\in\hil$,
\begin{equation}
    \label{eq:rho_psi}
    \rho_{\psi}(x) \propto \exp\po -\frac{1}{\sigma^2}\co V_0(x) + \langle\na_{\hil} R(\psi), \varphi(x)\rangle_{\hil}\cf \pf \,.
\end{equation}
Let us assume that $\mathcal{F}(\rho_\psi)<\infty$ for all $\psi \in \hil$. This implies in particular that \begin{equation}
    \label{eq:f}
    f(\psi) = \varphi\po \rho_{\psi}\pf = \int_{\R^d} \varphi(x) \rho_{\psi}(x)\dd x
\end{equation}
is well-defined, and we see that $\varphi\po \Gamma(\rho)\pf  =f\po \varphi(\rho)\pf$. In particular, any $\rho_*\in\mathcal K$ satisfies $\rho_* = \rho_{\varphi(\rho_*)}$. Thus, denoting by $\mathcal K' = \{\psi\in\hil,\  f(\psi)=\psi \}$ the set of fixed-points of $f$, 
\[\mathcal K = \{\rho_{\psi},\ \psi\in\mathcal K'\}\,.\]
One may hope that the stability properties of $\rho_* = \rho_{\psi_*} \in \mathcal K$ as a stationary solution of the (continuous-time) Wasserstein gradient descent is related to the stability of $\psi_*$ as a fixed-point of the (discrete-time) dynamical system $\psi \mapsto f(\psi)$ on $\hil$, which is a   classical question. In fact, we can find examples where $\rho_{\psi_*}$ is stable although $\psi_*$ is not (see the granular media equation with repulsive interaction in Figure~\ref{fig:repulsif} below). However, on the contrary, the next result shows that \eqref{eq:loc_LSInonlineaire} holds in a neighborhood $\rho_{\psi_*}$ (which is thus stable thanks to Theorem~\ref{thm:main}) if $\psi_*$ is   a geometrically attracting fixed-point.

\begin{prop}\label{prop:New_fixed_point}
Let $\psi_*\in\mathcal K'$, $\mathcal A' \subset \hil$ and $\alpha\in[0,1)$ be such that for all $\psi\in\mathcal A'$,
\begin{equation}
    \label{eq:contract_fixedpoint}
\|f(\psi)-\psi_*\|_{\hil} \leqslant \alpha \|\psi - \psi_*\|_{\hil}\,.
\end{equation}
Then, for any $\psi \in \mathcal A'$, 
\begin{equation}
    \label{eq:psipsi'} 
\|\psi-\psi_*\|_{\hil} \leqslant \frac{1}{1-\alpha} \|f(\psi)-\psi\|_{\hil}\,.
\end{equation}
Assuming moreover that $\varphi$ is $ \ell$-Lipschitz continuous, then, for all $\rho\in\mathcal P_2(\R^d)$ such that $\varphi(\rho)\in\mathcal A'$,
\[\|\varphi(\rho)-\psi_*\|_{\hil} \leqslant \frac{\ell}{1-\alpha} \mathcal W_2\po \rho,\Gamma(\rho)\pf\,.\]
Finally, assuming furthermore \eqref{eq:LSIlineaire_locale} and that $R+\theta\|\cdot\|_{\hil}^2$ is convex for some $\theta>0$, then, for all $\rho\in\mathcal P_2(\R^d)$ such that $\varphi(\rho)\in\mathcal A'$,
\[\mathcal F(\rho) - \mathcal F(\rho_{\psi_*}) \leqslant \eta \po \sigma^2   +  \frac{4\eta\theta \ell^2}{(1-\alpha)^2}\pf\mathcal I\po \rho|\Gamma(\rho)\pf\,.\]
\rev{In particular, provided  $\mathcal F(\rho_{\psi_*})$ is the minimum of $\mathcal F$ over $\mathcal A=\{\rho\in\mathcal P_2(\R^d),\ \varphi(\rho)\in\mathcal A'\}$, \eqref{eq:loc_LSInonlineaire} holds over $\mathcal A$.}
\end{prop}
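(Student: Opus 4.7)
The plan is to prove the three claims in succession, each building on the previous one. The first is a one-line triangle inequality, the second combines it with a Lipschitz estimate for $\varphi$, and the third is where the Hilbert-space curvature bound and Talagrand inequality come into play.

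For the first inequality \eqref{eq:psipsi'}, I would use $f(\psi_*) = \psi_*$ together with the triangle inequality:
\[
\|\psi - \psi_*\|_{\hil} \leqslant \|\psi - f(\psi)\|_{\hil} + \|f(\psi) - \psi_*\|_{\hil} \leqslant \|f(\psi) - \psi\|_{\hil} + \alpha \|\psi - \psi_*\|_{\hil}\,,
\]
and rearrange using $\alpha < 1$.

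For the second inequality, I would apply \eqref{eq:psipsi'} to $\psi = \varphi(\rho)$, using $f(\varphi(\rho)) = \varphi(\Gamma(\rho))$. Then for any coupling $\pi$ of $\rho$ and $\Gamma(\rho)$, Jensen's inequality in the Bochner sense together with the $\ell$-Lipschitz property of $\varphi$ gives
\[
\|\varphi(\rho) - \varphi(\Gamma(\rho))\|_{\hil} \leqslant \int_{\R^d \times \R^d} \|\varphi(x) - \varphi(y)\|_{\hil} \pi(\dd x, \dd y) \leqslant \ell \int_{\R^d \times \R^d} |x-y|\,\pi(\dd x,\dd y)\,.
\]
Taking the infimum over couplings and applying Cauchy--Schwarz (which gives $\mathcal W_1 \leqslant \mathcal W_2$) yields the claim.

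For the third (and main) inequality, the key observation is that, as discussed in Example~\ref{ex:hilbert}, the assumption that $R + \theta \|\cdot\|_{\hil}^2$ is convex means that $\mathcal E$ satisfies \eqref{eq:convexity-c} with cost $\mathcal C(\mu_0,\mu_1) = \theta \|\varphi(\mu_0) - \varphi(\mu_1)\|_{\hil}^2$. Applying \eqref{eq:Fmu0Fmu1<} from Lemma~\ref{lem:sandwich} with $\mu_0 = \rho$ and $\mu_1 = \rho_{\psi_*}$ (noting $\varphi(\rho_{\psi_*}) = \psi_*$) gives
\[
\mathcal F(\rho) - \mathcal F(\rho_{\psi_*}) \leqslant \sigma^2 \mathcal H(\rho | \Gamma(\rho)) + \theta \|\varphi(\rho) - \psi_*\|_{\hil}^2\,.
\]
Bounding the last term via the second inequality already proved gives a factor $\ell^2/(1-\alpha)^2$ times $\mathcal W_2^2(\rho, \Gamma(\rho))$, and then the Talagrand inequality \eqref{eq:NLT2} (implied by \eqref{eq:LSIlineaire_locale}) turns the $\mathcal W_2^2$ into $4\eta \mathcal H(\rho | \Gamma(\rho))$. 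A final application of \eqref{eq:LSIlineaire_locale} converts $\mathcal H(\rho | \Gamma(\rho))$ into $\eta \mathcal I(\rho | \Gamma(\rho))$ and yields the stated bound. Since $\rho_{\psi_*} \in \mathcal A \cap \mathcal K$ (because $\psi_* \in \mathcal A'$ trivially by the contraction condition at $\psi_*$, and $\rho_{\psi_*}$ is a critical point), \eqref{eq:rho*min} identifies $\mathcal F(\rho_{\psi_*})$ with $\inf_{\mathcal A} \mathcal F$, giving \eqref{eq:loc_LSInonlineaire}.

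No step is technically deep: the whole content is to translate a discrete-time contraction of $f$ around $\psi_*$ into a bound on $\|\varphi(\rho) - \psi_*\|_{\hil}$ by $\mathcal W_2(\rho, \Gamma(\rho))$, which is then plugged into the curvature term of the sandwich inequality. The only subtlety is making sure the Lipschitz bound on $\varphi$ is applied correctly in the Bochner sense to obtain a $\mathcal W_2$ bound, which is standard.
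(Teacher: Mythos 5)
Your proof is correct and follows essentially the same route as the paper: the triangle inequality for the contraction estimate, the Lipschitz bound on $\varphi$ to pass to $\mathcal W_2(\rho,\Gamma(\rho))$, and then Lemma~\ref{lem:sandwich} with $\mathcal C(\mu_0,\mu_1) = \theta\|\varphi(\mu_0)-\varphi(\mu_1)\|_{\hil}^2$ followed by the Talagrand inequality and \eqref{eq:LSIlineaire_locale}. The only thing worth flagging is the aside at the end: the contraction hypothesis \eqref{eq:contract_fixedpoint} is stated for $\psi\in\mathcal A'$ and does not by itself force $\psi_*\in\mathcal A'$ (hence $\rho_{\psi_*}\in\mathcal A$), so the identification of $\mathcal F(\rho_{\psi_*})$ with $\inf_{\mathcal A}\mathcal F$ is not quite ``trivial''; it is, however, a side remark about the ``i.e.'' clause and not part of the main chain of inequalities, which is fine.
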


\begin{proof}
The first inequality~\eqref{eq:psipsi'}  simply follows from the triangular inequality
\[\|\psi-\psi_*\|_{\hil} \leqslant \|\psi-f(\psi)\|_{\hil} + \|f(\psi)-\psi_*\|_{\hil}  \leqslant \|\psi-f(\psi)\|_{\hil} + \alpha \|\psi-\psi_*\|_{\hil} \,. \]
Applying this to $\psi=\varphi(\rho)$ reads
\[\|\varphi(\rho)-\psi_*\|_{\hil}
\leqslant \frac{1}{1-\alpha}  \|\varphi(\rho)-\varphi\po \Gamma(\rho)\pf \|_{\hil} 
\leqslant \frac{\ell}{1-\alpha} \mathcal W_2\po \rho,\Gamma(\rho)\pf\,.\]
Finally, applying Lemma~\ref{lem:sandwich} (with $\mathcal C(\mu_0,\mu_1)= \theta\|\varphi(\mu_0)-\varphi(\mu_1)\|_{\hil}^2$ as in Example~\ref{ex:hilbert}),
\begin{eqnarray*}
\mathcal F(\rho) - \mathcal F(\rho_{\psi_*}) & \leqslant & \sigma^2 \mathcal H\po \rho|\Gamma(\rho)\pf  + \theta\|\varphi(\rho)-\psi_*\|_{\hil}^2\\ 
& \leqslant & \sigma^2 \mathcal H\po \rho|\Gamma(\rho)\pf  +  \frac{\theta \ell^2}{(1-\alpha)^2} \mathcal W_2^2\po \rho,\Gamma(\rho)\pf\\
& \leqslant & \po \sigma^2   +  \frac{4\eta\theta \ell^2}{(1-\alpha)^2}\pf \mathcal H\po \rho|\Gamma(\rho)\pf 
\end{eqnarray*}
thanks to Talagrand inequality \eqref{eq:NLT2}. Conclusion follows from \eqref{eq:LSIlineaire_locale}.
\end{proof}

\begin{rem}
The constant $\oeta$ in  \eqref{eq:loc_LSInonlineaire} obtained 
when applying Proposition~\ref{prop:New_fixed_point}, and thus the quantitative convergence estimates obtained when applying Theorem~\ref{thm:main}, are explicit in terms of the constants in the assumptions. In particular there is no additionnal dependency in the dimension.
\end{rem}

We have thus identified the following general conditions:

\begin{assu}
\label{assu:fixedpoint}
There exist $\theta,\ell,\eta>0$ such that $\varphi$ is $\ell$-Lipschitz continuous, $R+\theta\|\cdot\|_{\hil}^2$ is convex and \eqref{eq:LSIlineaire_locale} holds. Moreover, $\mathcal{F}(\rho_\psi) < \infty$ for any $\psi \in \hil$.
\end{assu}

\begin{rem}\label{rem:LellL'}
As discussed in Example~\ref{ex:hilbert}, these conditions imply~\eqref{eq:convexity-c_W2} with $\lambda=\theta\ell^2$. Moreover, since 
\[\na E_{\rho}(x) = \na V_0(x) + \langle \na_{\hil} R\po \varphi(\rho)\pf, \na \varphi(x)\rangle_{\hil}\,, \]
assuming that $\na_{\hil} R$ is $L'$-Lipschitz continuous, we get that, for any $x\in\R^d$,
\[|\na E_{\mu_0}(x)  - \na E_{\mu_1}(x) |\leqslant \ell L' \| \varphi(\mu_0)-\varphi(\mu_1)\|_{\hil} \leqslant \ell^2 L ' \mathcal W_2(\mu_0,\mu_1)\,, \]
which is \eqref{eq:Lipshitz}.
\end{rem}

Under the general conditions of Assumption~\ref{assu:fixedpoint}, given a specific problem, what remains to be done is  to establish the contraction~\eqref{eq:contract_fixedpoint}. A simple way to get it locally at some fixed point $\psi_*$ is to check that $|\na_{\hil} f(\psi_*)|$, the operator norm of the Jacobian matrix of $f$, is strictly less than $1$ at $\psi_*$. We end up with the following.

\begin{cor}
\label{cor:|nabla_f|<1}
Under Assumption~\ref{assu:fixedpoint}, let $\psi_*\in\mathcal K'$ \rev{be such that $\rho_{\psi_*}$ is a local minimizer of $\mathcal F$}. Assume that $f$ is differentiable at $\psi_*$ with $|\na_{\hil} f(\psi_*)|<1$. Then there exist $\delta,\oeta>0$ such that \eqref{eq:loc_LSInonlineaire} holds on $\mathcal A= \mathcal B_{\mathcal W_2}(\rho_{\psi_*},\delta)$.
\end{cor}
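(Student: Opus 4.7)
The plan is to reduce the statement to Proposition~\ref{prop:New_fixed_point} by using the differentiability of $f$ at $\psi_*$ to obtain the geometric contraction~\eqref{eq:contract_fixedpoint} on a small $\hil$-ball around $\psi_*$, and then translating this $\hil$-neighborhood into a $\mathcal W_2$-ball via the Lipschitz continuity of the pushforward $\rho \mapsto \varphi(\rho)$.

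First I would unpack the spectral assumption. Setting $a := |\na_{\hil} f(\psi_*)| < 1$ and fixing any $\alpha \in (a,1)$, the definition of the Fréchet derivative, together with the fact that $f(\psi_*) = \psi_*$ (since $\psi_* \in \mathcal K'$), yields some $\varepsilon>0$ such that
\[
\|f(\psi) - \psi_*\|_{\hil} \leqslant \alpha \|\psi - \psi_*\|_{\hil}
\]
for every $\psi \in \mathcal A' := \{\psi \in \hil : \|\psi - \psi_*\|_{\hil} \leqslant \varepsilon\}$. This is precisely \eqref{eq:contract_fixedpoint}.

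Next, I invoke Proposition~\ref{prop:New_fixed_point} with this $\mathcal A'$: Assumption~\ref{assu:fixedpoint} provides all the required hypotheses ($\ell$-Lipschitz continuity of $\varphi$, convexity modulus $\theta$ for $R$, and \eqref{eq:LSIlineaire_locale}), so \eqref{eq:loc_LSInonlineaire} holds on the set $\{\rho \in \mathcal P_2(\R^d) : \varphi(\rho) \in \mathcal A'\}$ with a constant $\oeta$ depending explicitly on $\sigma^2,\eta,\theta,\ell,\alpha$. It then remains to verify that some $\mathcal W_2$-ball around $\rho_{\psi_*}$ is contained in this set. For this I would observe that since $\varphi:\R^d \to \hil$ is $\ell$-Lipschitz, the map $\rho \mapsto \int \varphi\, \dd\rho$ is itself $\ell$-Lipschitz from $(\mathcal P_2(\R^d), \mathcal W_2)$ to $\hil$, by integrating $\|\varphi(x) - \varphi(y)\|_{\hil}$ against an optimal coupling and applying Jensen's inequality together with $\mathcal W_1 \leqslant \mathcal W_2$. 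Since $\varphi(\rho_{\psi_*}) = f(\psi_*) = \psi_*$, this yields $\mathcal B_{\mathcal W_2}(\rho_{\psi_*}, \varepsilon/\ell) \subset \{\rho : \varphi(\rho) \in \mathcal A'\}$, and taking $\delta := \varepsilon/\ell$ concludes.

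There is no real obstacle in this proof: all the substantive content has been packaged into Proposition~\ref{prop:New_fixed_point}, and the corollary just observes that the quantitative local contraction \eqref{eq:contract_fixedpoint} follows automatically from the strict spectral bound $|\na_{\hil} f(\psi_*)| < 1$. The only mild subtlety is the order of quantifiers: one must fix a single $\alpha \in (a,1)$ before choosing $\varepsilon$, so that both the contraction radius and the constant $\oeta$ inherited from Proposition~\ref{prop:New_fixed_point} are simultaneously well-defined.
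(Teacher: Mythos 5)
Your proof is correct and follows essentially the same route as the paper's: a Taylor/Fréchet expansion at the fixed point $\psi_*$ gives the local contraction~\eqref{eq:contract_fixedpoint}, the Lipschitz continuity of $\rho \mapsto \varphi(\rho)$ translates the $\hil$-neighborhood into a $\mathcal W_2$-ball, and Proposition~\ref{prop:New_fixed_point} does the rest.
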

\begin{proof}
Using that $\psi_*$ is a fixed point of $f$ and a Taylor expansion,
\[\|f(\psi)- \psi_*\|_{\hil}  = \| \na_{\hil} f(\psi_*) (\psi-\psi_*)\|_{\hil} + o(\|\psi-\psi_*\|_{\hil}) \leqslant \alpha \|\psi-\psi_*\|_{\hil}\]
for some $\alpha<1$ uniformly over some neighborhood $\mathcal N$ of $\psi_*$. The function $\varphi$ being Lipschitz continuous, $\rho \mapsto \varphi(\rho)$ is Lipschitz continuous for the $\mathcal W_2$ distance, which means that $\varphi(\rho)$ lies in $\mathcal N$ for all $\rho \in \mathcal B_{\mathcal W_2}(\rho_{\psi_*},\delta)$ for $\delta$ small enough. Conclusion follows from Proposition~\ref{prop:New_fixed_point}.
\end{proof} 

\subsubsection{Decomposition of the free energy}\label{sec:decomposition}

The difference between \eqref{eq:LSIlineaire_locale} and \eqref{eq:LSInonlineaire} obviously lies in the difference between $\mathcal H(\rho|\Gamma(\rho))$ and $\mathcal F(\rho)$. In the case \eqref{eq:EHilbert}, we can decompose
\[\mathcal F(\rho) = \sigma^2  \mathcal H\po \rho|\Gamma(\rho)\pf + g\po \varphi(\rho) \pf \]
where
\begin{equation}
    \label{eq:gsigma}
    g(\psi) =  R\po \psi \pf - \langle \na_{\hil} R\po \psi \pf , \psi \rangle_{\hil} - \sigma^2 \ln \int_{\R^d} e^{- \frac{1}{\sigma^2}\co  V_0(x) +  \langle \na_{\hil} R\po \psi \pf , \varphi(x) \rangle_{\hil} \cf }\dd x\,.
\end{equation}
This second part of the free energy only depends on $\rho$ through the parameter $\varphi(\rho)$.

\begin{rem}\label{rem:Bregman}
If $R$ is concave, then $M(\psi_1,\psi_2):= R(\psi_2)-R(\psi_1) + \langle \na_{\hil} R(\psi_2),\psi_1-\psi_2\rangle_{\hil} \geqslant 0 $ (this is the so-called Bregman divergence associated with $-R$). Assuming furthermore that $V_0(x) = V(x) - R(\varphi(x))$ with $V$ such that $e^{-V/\sigma^2 }$ is integrable, and that $M(\psi_1,\psi_2) \rightarrow +\infty$ when $\psi_2\rightarrow \infty$, we get by dominated convergence that
\[e^{-\frac{1}{\sigma^2} g(\psi)} =    \int_{\R^d} e^{- \frac{1}{\sigma^2}\co  V(x) + M \po \varphi(x), \psi\pf \cf}\dd x  \underset{\|\psi\|_{\hil} \rightarrow +\infty}\longrightarrow  0\,. \]
As a consequence, $g$ is lower bounded and goes to infinity at infinity. Moreover, it is continuous, and thus in particular when $\hil$ has finite dimension then $g$ reaches its minimum. Notice that, in the quadratic case where $R(\psi)=-\theta \|\psi\|_{\hil}^2$, $M(\psi_1,\psi_2)=\theta\|\psi_1-\psi_2\|_{\hil}^2$.
\end{rem}

Assuming that $R\in\mathcal C^2(\hil ,\R)$,
\begin{equation}
    \label{eq:nag}
\na_{\hil} g(\psi) =  \na_{\hil}^2 R(\psi)  \po f(\psi) - \psi\pf  \,,
\end{equation}
where we used that
\[\frac{\int_{\R^d} \na_{\hil}^2 R(\psi)  \varphi e^{-\frac{1}{\sigma^2}[V_0(x) + \langle \na_\hil R\po \psi \pf,   \varphi(x)\rangle_{\hil}]}\dd x}{\int_{\R^d} e^{-\frac{1}{\sigma^2}[V_0(x) + \langle \na_\hil R\po \psi \pf,   \varphi(x)\rangle_{\hil}]}\dd x} = \na_{\hil}^2 R(\psi)   \varphi\po \rho_{\psi}\pf =  \na_{\hil}^2 R(\psi)   f(\psi)\,. \]
As a consequence, if $\psi_*$ is a critical point of $g$ such that $\na_{\hil}^2 R(\psi_*)$ is non-singular (which is in particular the case for all critical points of $g$ if $R$ is strictly convex or concave, which covers many cases of interest) then necessarily $\psi_* \in \mathcal K'$. In that case, if moreover $\psi_*$ is a local minimizer of $g$, then $\rho_{\psi_*}$ is a minimizer of both parts of the free energy, $\rho \mapsto \mathcal H(\rho|\Gamma(\rho))$ and $\rho \mapsto g\po \varphi(\rho)\pf$ (at least locally in the latter). It is thus a good candidate to be a stable stationary solution for the Wasserstein gradient descent for the free energy. This is indeed the case:

\begin{prop}\label{prop:g_minima}
Assume \eqref{eq:convexity-c_W2}, \eqref{eq:Lipshitz}, \eqref{eq:onesided} and \eqref{eq:LSIlineaire_locale} for some $\lambda, L,\kappa_1,\eta>0$   and $\varphi$ is $\ell$-Lipschitz continuous. 
Let $\psi_*\in\mathcal K'$ be a  proper isolated local minimizer of $g$, in the sense that for all $\varepsilon>0$ small enough, $\zeta(\varepsilon):=\inf\{g(\psi)-g(\psi_*),\ \psi\in\hil,\ \|\psi-\psi_*\|_{\hil}=\varepsilon\}>0$ and $\mathcal{K}' \cap \mathcal B(\psi_*,\varepsilon) = \{\psi_*\}$. Then there exists $\delta>0$ such that for all initial conditions $\rho_0 \in \mathcal B_{\mathcal W_2}(\rho_{\psi_*},\delta)$, the flow \eqref{eq:granular_media_2} converges in long-time to $\rho_{\psi_*}$.

\end{prop}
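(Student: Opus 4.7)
The strategy is to exploit the decomposition
\begin{equation*}
\mathcal F(\rho)=\sigma^2\,\mathcal H(\rho|\Gamma(\rho))+g(\varphi(\rho))
\end{equation*}
introduced at the start of the section, together with the monotonicity of $\mathcal F$ along \eqref{eq:granular_media_2} and the nonnegativity of the relative entropy, to confine $\varphi(\rho_t)$ inside a small ball $\mathcal B(\psi_*,\varepsilon)\subset\hil$ where $\psi_*$ is the only fixed point of $f$. Once confinement is established, a LaSalle-type argument driven by the dissipation identity \eqref{eq:dissipationF} identifies the only possible accumulation point as $\rho_{\psi_*}$.

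For the trapping step, fix $\varepsilon>0$ small enough that $\zeta(\varepsilon)>0$ and $\mathcal K'\cap\mathcal B(\psi_*,\varepsilon)=\{\psi_*\}$, and choose $\delta>0$ so small that (i) $\ell\,e^{\kappa_1/2+L}\delta<\varepsilon/2$ and (ii) $q_1\delta^2<\zeta(\varepsilon)$, where $q_1$ is the constant of Corollary~\ref{Cor:FW2} (applicable thanks to Remark~\ref{rem:LellL'}). By the synchronous coupling bound \eqref{eq:W2rhotrho*} and the Lipschitz continuity of $\varphi:(\mathcal P_2(\R^d),\mathcal W_2)\to\hil$, condition (i) gives $\|\varphi(\rho_t)-\psi_*\|_\hil<\varepsilon/2$ for all $t\in[0,1]$. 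Set $\tau=\inf\{t\geq 0:\|\varphi(\rho_t)-\psi_*\|_\hil\geq\varepsilon\}$. Assume for contradiction that $\tau<\infty$; then $\tau>1$ and continuity of $t\mapsto\varphi(\rho_t)$ (inherited from $t\mapsto\rho_t$) forces $\|\varphi(\rho_\tau)-\psi_*\|_\hil=\varepsilon$, hence $g(\varphi(\rho_\tau))-g(\psi_*)\geq\zeta(\varepsilon)$. Combining $\mathcal H(\rho_\tau|\Gamma(\rho_\tau))\geq 0$, the decay of $\mathcal F$ and (ii),
\begin{equation*}
\zeta(\varepsilon)\leq g(\varphi(\rho_\tau))-g(\psi_*)\leq\mathcal F(\rho_\tau)-\mathcal F(\rho_{\psi_*})\leq\mathcal F(\rho_1)-\mathcal F(\rho_{\psi_*})\leq q_1\delta^2<\zeta(\varepsilon),
\end{equation*}
a contradiction. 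Hence $\tau=\infty$ and the trajectory parameter stays in $\mathcal B(\psi_*,\varepsilon)$ forever.

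For the long-time convergence, Assumption~\ref{assu:F} and \eqref{eq:dissipationF} yield $\int_0^\infty\mathcal I(\rho_s|\Gamma(\rho_s))\,ds<\infty$. Given any sequence $t_n\to\infty$, pick $t_n'\in[t_n,t_n+1]$ with $\mathcal I(\rho_{t_n'}|\Gamma(\rho_{t_n'}))\to 0$; the Benamou-Brenier estimate used in the proof of Lemma~\ref{lem:localLSI_consequence} together with Cauchy-Schwarz ensures $\mathcal W_2(\rho_{t_n},\rho_{t_n'})\to 0$. By \eqref{eq:LSIlineaire_locale} and the Talagrand inequality \eqref{eq:NLT2} this gives $\mathcal W_2(\rho_{t_n'},\Gamma(\rho_{t_n'}))\to 0$. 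Since $\varphi(\rho_{t_n'})\in\bar{\mathcal B}(\psi_*,\varepsilon)$, one extracts a subsequence with $\varphi(\rho_{t_n'})\to\psi_\infty\in\bar{\mathcal B}(\psi_*,\varepsilon)$; the continuity of $\psi\mapsto\rho_\psi$ (which follows from the explicit Gibbs form \eqref{eq:rho_psi} under Assumption~\ref{assu:lfdE}) then yields $\Gamma(\rho_{t_n'})\to\rho_{\psi_\infty}$ in $\mathcal W_2$, hence $\rho_{t_n}\to\rho_{\psi_\infty}$. Applying $\varphi$ gives $\psi_\infty=f(\psi_\infty)$, so $\psi_\infty\in\mathcal K'\cap\bar{\mathcal B}(\psi_*,\varepsilon)=\{\psi_*\}$. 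Every subsequence of $(\rho_t)$ thus admits a further subsequence converging to $\rho_{\psi_*}$, and the whole flow converges.

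The main delicate point is the compactness/continuity input in the third step: one must guarantee that $\varphi(\rho_{t_n'})$ admits a convergent subsequence in $\hil$ and that $\psi\mapsto\rho_\psi$ is $\mathcal W_2$-continuous, both of which are immediate when $\hil$ is finite-dimensional (the case of interest in Section~\ref{subsec:LSIgranular}) but require some care, via weak compactness and weak continuity of $\psi\mapsto\rho_\psi$ through \eqref{eq:rho_psi}, in the general Hilbertian setting.
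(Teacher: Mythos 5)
Your proof is correct and the trapping step is essentially identical to the paper's: both use the a priori monotonicity of $\mathcal F$ along the flow, the decomposition $\mathcal F(\rho)=\sigma^2\mathcal H(\rho|\Gamma(\rho))+g(\varphi(\rho))$ (so $\mathcal F(\rho)\geqslant g(\varphi(\rho))$), the synchronous-coupling bound~\eqref{eq:W2rhotrho*} to confine $\varphi(\rho_t)$ on $[0,1]$, and Corollary~\ref{Cor:FW2} to ensure $\mathcal F(\rho_1)-\mathcal F(\rho_{\psi_*})<\zeta(\varepsilon)$, forcing $\|\varphi(\rho_t)-\psi_*\|_\hil<\varepsilon$ for all $t$. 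Where you diverge is the final convergence argument: the paper simply cites the LaSalle invariance principle for Wasserstein gradient flows established in~\cite{Carrillo} (which yields $\mathcal W_2(\rho_t,\mathcal K)\to 0$, hence $\rho_t\to\rho_{\psi_*}$ since $\rho_{\psi_*}$ is isolated), whereas you give a self-contained argument: from $\int_1^\infty\mathcal I(\rho_s|\Gamma(\rho_s))\,\dd s<\infty$ you pick times $t_n'$ of vanishing Fisher information, deduce via~\eqref{eq:LSIlineaire_locale} and Talagrand~\eqref{eq:NLT2} that $\mathcal W_2(\rho_{t_n'},\Gamma(\rho_{t_n'}))\to 0$, then use compactness of $\bar{\mathcal B}(\psi_*,\varepsilon)\subset\hil$ and continuity of $\psi\mapsto\rho_\psi$ to identify the limit as the unique fixed point $\psi_*$. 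This hand-rolled LaSalle argument has the merit of being explicit and of avoiding any $\mathcal P_2(\R^d)$-tightness input — compactness is entirely transferred to the parameter space $\hil$ — but as you rightly note it requires $\hil$ to be finite-dimensional (or extra weak-continuity assumptions), whereas the paper's appeal to~\cite{Carrillo} is agnostic to $\dim\hil$. One minor technical point you gloss over: $\mathcal W_2$-continuity of $\psi\mapsto\rho_\psi$ does not follow from Assumption~\ref{assu:lfdE}(i) alone; it requires uniform integrability of second moments of $\rho_\psi$ for $\psi$ near $\psi_*$, which does hold here via the uniform LSI~\eqref{eq:LSIlineaire_locale} (uniform Gaussian concentration) but deserves a word. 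Also, you should take $\varepsilon$ a bit smaller than the one for which $\mathcal K'\cap\mathcal B(\psi_*,\varepsilon)=\{\psi_*\}$, since your limit $\psi_\infty$ a priori lies in the closed ball, where a second fixed point on the boundary cannot be excluded without shrinking.
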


\begin{proof}
Let $\varepsilon>0$ be small enough so that $\zeta(\varepsilon)> 0$ and  $\mathcal{K}' \cap \mathcal B(\psi_*,\varepsilon) = \{\psi_*\}$.  Notice that, for $\psi,\tilde\psi \in \mathcal K'$, $\|\psi-\tilde{\psi}\|_\hil = \|\varphi(\rho_\psi)-\varphi(\rho_{\tilde{\psi}})\|_\hil \leq \ell \mathcal{W}_2(\rho_\psi,\rho_{\tilde{\psi}})$, so that the second condition implies that $\mathcal K\cap \mathcal B_{\mathcal W_2}(\rho_{\psi_*},\varepsilon/\ell) = \{\rho_{\psi_*}\}$, and thus $\rho_{\psi_*}$ is an isolated critical point of $\mathcal F$. For any $\rho \in\mathcal P_2$ with $\|\varphi(\rho)-\psi_*\|_{\hil}=\varepsilon$, 
\[\mathcal F(\rho) \geqslant g\po \varphi(\rho)\pf \geqslant g(\psi_*) + \zeta(\varepsilon) = \mathcal F(\rho_{\psi_*}) + \zeta(\varepsilon)\,.\]
As a consequence, given a solution $(\rho_t)_{t\geqslant 0}$ of \eqref{eq:granular_media}, if there is a time $t_0$ such that $\|\varphi(\rho_{t_0}) - \psi_*\|_{\hil} < \varepsilon$ and $\mathcal F(\rho_{t_0}) - \mathcal F(\rho_{\psi_*}) < \zeta(\varepsilon) $ then the monotonicity of the free energy along the flow together with the continuity of $t\mapsto \|\varphi(\rho_t)-\psi_*\|_{\hil} $ implies that $\|\varphi(\rho_t)-\psi_*\|_{\hil}<\varepsilon$ for all $t\geqslant t_0$. Conclusion would thus follow from  LaSalle
invariance principle~\cite{Carrillo} using that $\rho_{\psi_*}$ is isolated.

Hence, let us prove that the previous conditions are met for initial conditions that are sufficiently close to $\rho_{\psi_*}$. We follow arguments similar to the proof of Theorem~\ref{thm:main}. Indeed, for all $t\geqslant 0$, using \eqref{eq:W2rhotrho*}
,
\[\|\varphi(\rho_{t}) - \psi_*\|_{\hil} \leq \ell  e^{(\kappa_1/2+L)t}  \mathcal W_2\po \rho_0 ,\rho_{\psi_*}\pf \,.\]
By taking $\delta <  \varepsilon e^{-\kappa_1/2-L} /\ell$, we ensure that 
$\|\varphi(\rho_{t_0}) - \psi_*\|_{\hil} <\varepsilon$ at time $t_0=1$ for all $\rho_0 \in \mathcal B_{\mathcal W_2}(\rho_{\psi_*},\delta)$. On the other hand, thanks to Corollary~\ref{Cor:FW2}, by taking $\delta$ small enough, we can also ensure that   $\mathcal F(\rho_{t_0}) - \mathcal F(\rho_{\psi_*}) < \zeta(\varepsilon) $ with such initial conditions.
\end{proof}
 
\begin{rem}
The argument that initial conditions that start with a free energy sufficiently close to $\mathcal F(\rho_*)$ will stay within a ball centered at $\rho_*$ has been used e.g. in \cite{tugaut2014self,Bashiri}. Here we combine it with the small-time regularization result of Corollary~\ref{Cor:FW2} to get a result only in terms of a $\mathcal W_2$ ball.
\end{rem}
 
 In fact, we can now reinterpret Proposition~\ref{prop:New_fixed_point} in light of Proposition~\ref{prop:g_minima}. Indeed, differentiating again \eqref{eq:nag} (assuming suitable regularity) at some $\psi_*\in\mathcal K'$ yields
 \begin{equation}
     \label{eq:nag2}
\na_{\hil}^2 g(\psi_*) = \na_{\hil}^2 R(\psi_*) \po \na_{\hil} f(\psi_*) - I\pf
\end{equation}
 (where we used that  $f(\psi_*)=\psi_*$). For brevity, denote $\na^2R_*=\na_{\hil}^2 R(\psi_*)$.  For any $u\in\hil$,
 \[\langle u,\na_{\hil}^2 g(\psi_*) u \rangle_{\hil} = \langle u , \na^2 R_* \na_{\hil} f(\psi_*)u  \rangle_{\hil} - \langle u , \na^2 R_* u\rangle_{\hil} \,.\]
 Assuming that $\na^2 R_*$ is negative definite, we can consider the norm $\|u\|_{*} = \sqrt{-\langle u, \na^2 R_*u\rangle_{\hil}}$ and the associated operator norm $\|\na_{\hil} f(\psi_*)\|_*$. Then, 
  \[\langle u, \na_{\hil}^2 g(\psi_*) u \rangle_{\hil} \geqslant \po 1 - \|\na_{\hil} f(\psi_*)\|_*\pf \|u\|_*^2 \,.\]
 We get that $\psi_*$ is a non-degenerate local minimizer of $g$ if $\|\na_{\hil} f\|_* <1$. On the other hand, if  $\|\na_{\hil} f\|_* <1$  and $(\na^2 R_*)^{-1}$ is bounded, then we can assume   that $\|\cdot\|_* = \|\cdot\|_{\hil}$ (which amounts to the linear change of coordinates given by $(\na^2 R_*)^{-1/2}$) and apply Proposition~\ref{prop:New_fixed_point} or, alternatively, we can use the contraction \eqref{eq:contract_fixedpoint} expressed with norm $\|\cdot\|_*$ to get  \eqref{eq:psipsi'} with the same norm and then with the initial $\|\cdot\|_{\hil}$ by the equivalence of the norms  and then proceed with Proposition~\ref{prop:New_fixed_point}). At least, this suggests that in some cases, in order to apply Proposition~\ref{prop:New_fixed_point} at some point $\psi_*$, it may be more natural to work with the norm $\|\cdot\|_*$ associated to $\na^2 R_*$ (when the latter is negative definite).

 \rev{\begin{rem}\label{rem:minimizer}
     To sum up, when $\psi_* \in \mathcal K'$ is such that $\na^2_{\mathbb H} R(\psi_*)$ is negative definite and $\|\na_{\mathbb H} f\|_*<1$, then $\rho_{\psi_*}$ is a local minimizer of $\mathcal F$ and thus Corollary~\ref{cor:|nabla_f|<1} applies. 
 \end{rem}}
 
 Besides, we compute that
 \[ \na_{\hil} f(\psi) = -\frac1{\sigma^2} \co \int_{\R^d} \varphi \po \na_{\hil}^2 R(\psi) \varphi\pf^T \rho_{\psi} - f(\psi) \po \na_{\hil}^2 R(\psi) f(\psi) \pf^T\cf \,, \] 
 from which, for $u_1,u_2\in\hil$, writing $w_i=\na^2 R_* u_i$ for $i=1,2$,
 \begin{eqnarray}
     \langle u_1, \na^2 R_* \na_\hil f(\psi_*) u_2\rangle_{\hil} &=& - \frac1{\sigma^2}\left  \langle w_1 ,  \co\int_{\R^d} \varphi \varphi^T \rho_{\psi_*} - f(\psi_*) \po f(\psi_*)\pf ^T\cf w_2 \right\rangle_{\hil} \nonumber\\
     &=& - \frac1{\sigma^2} \langle w_1 , \mathrm{covar}_{\rho_{\psi_*}}\po \varphi(X) \pf  w_2\rangle_{\hil} \,.     \label{eq:u1naR2fu2}
 \end{eqnarray}
  In particular, we see that $\na_{\hil} f(\psi_*)$ is symmetric and nonnegative  for the 
scalar product associated to $\|\cdot\|_*$, from which
\begin{equation}
         \label{eq:u1naR2fu2bis}
\|\na_{\hil} f(\psi_*)\|_* = \sup_{u\neq 0} \frac{\langle u, \na^2 R_* \na_{\hil} f(\psi_*) u\rangle_{\hil}}{\langle u, \na^2 R_*   u\rangle_{\hil}}\,.
\end{equation}
 
Furthermore, if we work with suitable coordinates in order to enforce that $\na^2 R_*=-I$, we get that, for $\psi_*\in\mathcal K'$,
 \[\na_{\hil}^2 g(\psi_*) = I - \frac{1}{\sigma^2} \mathrm{covar}_{\rho_{\psi_*}}(\varphi(X)) \,. \]

\subsubsection{Degenerate minima}\label{sec:degenerate}

There is no convergence rate in Proposition~\ref{prop:g_minima}, and we cannot expect one in general if we do not assume that $\psi_*$ is a non-degenerate local minimizer of $g$. Assuming again that $\na_{\hil}^2 R(\psi_*)$ is non-singular, in view of \eqref{eq:nag2}, if $\na_{\hil}^2 g(\psi_*)$ is singular then $|\na_{\hil} f(\psi_*)| \geqslant  1$ (since $\na_{\hil} f(\psi_*) u = u$ for   $u\in\mathrm{ker}\na_{\hil}^2 g(\psi_*)$, and thus this does not depend on the norm we use). In other words, degenerate minima of $g$ fall in the limit case where we cannot apply Proposition~\ref{prop:New_fixed_point} anymore. Then we can weaken the contraction condition~\eqref{eq:contract_fixedpoint} to allow for high-order stable fixed points. The proof of the following is the same as the one of Proposition~\ref{prop:New_fixed_point} (hence omitted).

\begin{prop}\label{prop:New_fixed_point_degenere}
Let $\psi_*\in\mathcal K'$, $\mathcal A' \subset \hil$ and $\beta>0,\nu\in(0,1)$ be such that for all $\psi\in\mathcal A'$,
\begin{equation}\label{eq:degenerate_condition}
\|\psi-\psi_*\|_{\hil} \leqslant \beta \|f(\psi)-\psi\|_{\hil}^{\nu}\,.
\end{equation}
Assuming   furthermore \eqref{eq:LSIlineaire_locale}, that $R+\theta \|\cdot\|_{\hil}^2$ is convex for some $\theta>0$ and that  $\varphi$ is $\ell $-Lipschitz continuous, then, for all $\rho\in\mathcal P_2(\R^d)$ such that $\varphi(\rho)\in\mathcal A'$,
\begin{equation}
    \label{eq:degenerateLSI}
    \mathcal F(\rho) - \mathcal F(\rho_{\psi_*}) \leqslant \eta   \sigma^2   \mathcal I\po \rho|\Gamma(\rho)\pf +  \theta  \beta^2 (4\eta^2\ell^2 )^{\nu}  \mathcal I^{\nu} \po \rho|\Gamma(\rho)\pf\,.
\end{equation}
\end{prop}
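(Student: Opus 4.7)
The plan is to mirror the three-step argument in the proof of Proposition~\ref{prop:New_fixed_point}, with the linear contraction~\eqref{eq:contract_fixedpoint} replaced by the H\"older-type bound~\eqref{eq:degenerate_condition}; the only change is to track how the exponent $\nu$ propagates through the chain of inequalities.

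First, I would apply~\eqref{eq:degenerate_condition} at $\psi = \varphi(\rho) \in \mathcal A'$. Using the identity $f(\varphi(\rho)) = \varphi(\Gamma(\rho))$ noted before Proposition~\ref{prop:New_fixed_point}, together with the fact that $\ell$-Lipschitz continuity of $\varphi$ on $\R^d$ lifts (by integration against an optimal coupling) to $\|\varphi(\mu_0)-\varphi(\mu_1)\|_{\hil} \leqslant \ell \mathcal W_2(\mu_0,\mu_1)$, this yields
\[
\|\varphi(\rho) - \psi_*\|_{\hil} \leqslant \beta \|\varphi(\rho) - \varphi(\Gamma(\rho))\|_{\hil}^{\nu} \leqslant \beta \ell^{\nu} \mathcal W_2\po\rho,\Gamma(\rho)\pf^{\nu}.
\]

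Second, I would invoke Lemma~\ref{lem:sandwich}, specifically~\eqref{eq:Fmu0Fmu1<}, with $\mu_0 = \rho$, $\mu_1 = \rho_{\psi_*} \in \mathcal K$, and cost $\mathcal C(\mu_0,\mu_1) = \theta \|\varphi(\mu_0)-\varphi(\mu_1)\|_{\hil}^2$, which satisfies~\eqref{eq:convexity-c} in the Hilbert setting of Example~\ref{ex:hilbert}. Since $\varphi(\rho_{\psi_*}) = f(\psi_*) = \psi_*$, combining with the first step gives
\[
\mathcal F(\rho) - \mathcal F(\rho_{\psi_*}) \leqslant \sigma^2 \mathcal H\po \rho|\Gamma(\rho)\pf + \theta \beta^2 \ell^{2\nu} \mathcal W_2^{2\nu}\po\rho,\Gamma(\rho)\pf.
\]

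Third, I would apply the Talagrand $T_2$ inequality $\mathcal W_2^2\po\rho,\Gamma(\rho)\pf \leqslant 4\eta \mathcal H\po\rho|\Gamma(\rho)\pf$ (consequence of~\eqref{eq:LSIlineaire_locale} via \cite{OttoVillani}) raised to the power $\nu \in (0,1)$, followed by~\eqref{eq:LSIlineaire_locale} itself to replace $\mathcal H$ by $\eta \mathcal I$ in both the $\mathcal H$ term and the $\mathcal H^{\nu}$ term. The first summand yields $\sigma^2 \eta \mathcal I\po\rho|\Gamma(\rho)\pf$ and the second yields $\theta \beta^2 \ell^{2\nu} (4\eta)^{\nu} \eta^{\nu} \mathcal I^{\nu} = \theta \beta^2 (4\eta^2\ell^2)^{\nu} \mathcal I^{\nu}\po\rho|\Gamma(\rho)\pf$, which is precisely~\eqref{eq:degenerateLSI}.

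No real difficulty is to be expected in carrying out this plan: the computation is essentially routine once the right cost $\mathcal C$ is selected in the sandwich inequality. The conceptual content of the proposition lies not in the proof but in its use: because $\nu < 1$, the term $\mathcal I^{\nu}$ dominates $\mathcal I$ as $\mathcal I \to 0$, so the Gr\"onwall argument of Lemma~\ref{lem:localLSI_consequence} no longer produces exponential decay and one instead expects polynomial convergence rates, consistent with the critical case $\sigma^2 = \sigma_c^2$ of Proposition~\ref{prop:double-well}.
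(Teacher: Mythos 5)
Your proposal is correct and follows exactly the route the paper indicates: the paper states that the proof of Proposition~\ref{prop:New_fixed_point_degenere} is the same as that of Proposition~\ref{prop:New_fixed_point} and omits it, and your three steps (lift~\eqref{eq:degenerate_condition} to $\varphi(\rho)$ via $f(\varphi(\rho))=\varphi(\Gamma(\rho))$ and the Lipschitz bound, apply~\eqref{eq:Fmu0Fmu1<} with $\mathcal C(\mu_0,\mu_1)=\theta\|\varphi(\mu_0)-\varphi(\mu_1)\|_{\hil}^2$, then Talagrand and the LSI raised to the power $\nu$) are precisely the adaptation that yields~\eqref{eq:degenerateLSI} with the stated constants. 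The concluding remark on why $\nu<1$ precludes exponential decay is also accurate and consistent with the discussion surrounding~\eqref{eq:EntropyDecayDegenerate}.
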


Under the settings of Proposition~\ref{prop:g_minima}, assume that \eqref{eq:degenerate_condition} holds in a neighborhood of $\psi_*$. Thanks to Proposition~\ref{prop:g_minima}, there exists $\delta>0$ such that for all initial conditions $\rho_0 \in \mathcal B_{\mathcal W_2}(\rho_{\psi_*},\delta)$, the gradient flow converges to $\rho_*$ and the inequality~\eqref{eq:degenerateLSI} holds for $\rho_t$ for all time $t\geqslant 0$. Combined with Corollary~\ref{Cor:FW2}, this yields algebraic convergence rates of the form
\[ \mathcal F(\rho_t) - \mathcal F(\rho_{\psi_*}) \leqslant  \frac{C}{\po \mathcal W_2^{2r}(\rho_0,\rho_{\psi_*}) + t  \pf^{1/r}}\,,\]
with $r=1/\nu-1>0$ and some constant $C>0$, for all $t\geqslant 1$. From this algebraic decay of the free energy, we can get convergence of $\rho_t$ to $\rho_{\psi_*}$ since \eqref{eq:Fmu0Fmu1>} in Lemma~\ref{lem:sandwich} gives
\[\mathcal H\po \rho_t|\rho_{\psi_*}\pf \leqslant  \mathcal F(\rho_t) - \mathcal F(\rho_{\psi_*}) + \theta \|\varphi(\rho_t)-\psi_*\|_{\hil}^2\,.\]
When following the proof of Proposition~\ref{prop:New_fixed_point} to get Proposition~\ref{prop:New_fixed_point_degenere}, we see that we  obtain as an intermediary inequality that 
\[\|\varphi(\rho_t)-\psi_*\|_{\hil}^2 \leqslant \beta^2 (4\eta \ell^2)^{\nu} \mathcal H^{\nu}\po \rho_t|\Gamma(\rho_t)\pf  \] 
as long as $\varphi(\rho_t) \in \mathcal A'$ (hence for all times here since $\rho_0 \in \mathcal B_{\mathcal W_2}(\rho_{\psi_*},\delta)$). Moreover, since $\psi_*$ is a local minimizer of $g$, by taking $\delta$ sufficiently small we get that $g(\varphi(\rho_t)) \geqslant g(\psi_*)$ for all $t\geqslant 0$, in other words
\[\mathcal H(\rho_t|\Gamma(\rho_t)) \leqslant \mathcal F(\rho_t) - \mathcal F(\rho_{\psi_*})\,.\]
Gathering all these bounds give 
\begin{equation}
    \label{eq:EntropyDecayDegenerate}
    \mathcal H\po \rho_t|\rho_{\psi_*}\pf \leqslant  \frac{C_0}{t^{\nu^2/(1-\nu)}}
\end{equation}
for some $C_0>0$ for all $t\geqslant 0$.

\subsection{Application to granular media equation}\label{subsec:LSIgranular}

In this section we focus on the granular media equation~\eqref{eq:granular_media} on $\R^d$, which corresponds to  the free energy \eqref{eq:def_F} (in fact we will consider a slightly more general case in Section~\ref{sec:localization}). For clarity, for now, let us focus on the quadratic interaction case where $W(x,y)=\theta|x-y|^2$ for some $\theta> 0$ (recall from \eqref{eq:courbureWa_kr_k} that, for $\theta \leqslant0$, $\mathcal E$ is convex and thus \eqref{eq:LSInonlineaire} follows from \eqref{eq:LSIlineaire_locale} thanks to Lemma~\ref{lem:sandwich}). This is a particular case of \eqref{eq:EHilbert} with $\mathcal H=\R^d$,
\[V_0(x) = V(x) + \theta |x|^2\,,\qquad  \varphi(x)=x\,,\qquad R(\psi) = -\theta |\psi|^2\,.\]
Assumption~\ref{assu:fixedpoint} is satisfied  since $R+\theta|\cdot|^2$ is convex and $\varphi$ is Lipschitz continuous (which, as discussed in Remark~\ref{rem:LellL'}, also gives \eqref{eq:convexity-c_W2} and \eqref{eq:Lipshitz}). We assume that $V_0$ is strongly convex outside a compact set, so that \eqref{eq:LSIlineaire_locale} holds (as explained in Section~\ref{sec:settings}) and also \eqref{eq:onesided}. Assumptions~\ref{assu:F}, \ref{assu:lfdE} and \ref{assu:loc-eq} are readily checked, and Assumption~\ref{assu:pde} follows from Propositions~\ref{prop:Ambrosio} and \ref{prop:Wang}.

 Using the notations introduced in Section~\ref{subsec:Hilbert} (except that we use $m$, as in \emph{mean}, to denote the parameter, instead of $\psi$ as in the general case), for $m\in\R^d$,  
\[\rho_m(x) = \frac{e^{- \frac{1}{\sigma^2}\co  V(x)+ \theta|x|^2 -2\theta x \cdot m\cf }}{\int_{\R^d} e^{- \frac{1}{\sigma^2}\co  V(y)+\theta|y|^2 -2\theta y \cdot m\cf}\dd y} \,,\qquad f(m) = \int_{\R^d} x\rho_m(x)\dd x\,, \]
and
\[g(m) =  \theta |m|^2 - \sigma^2 \ln \int_{\R^d} e^{ - \frac{1}{\sigma^2}\co V(x) + \theta|x|^2 - 2 \theta x\cdot m \cf  }\dd x  =  -\sigma^2 \ln \int_{\R^d} e^{- \frac{1}{\sigma^2}\co V(x) + \theta|x-m|^2 \cf }\dd x  \,. \]

The graph of $g$ and $f$ are represented in Figures~\ref{fig:double-well},  \ref{fig:convex} and \ref{fig:repulsif}, in dimension $1$, either in the double-well case $V(x)= x^4/4-x^2/2 $ (Figures \ref{fig:double-well} and \ref{fig:repulsif}), or the convex case $V(x)=x^2/2$ (Figure \ref{fig:convex}), in each case with $W(x,y)=\theta|x-y|^2$ (with $\theta=1$ in Figures \ref{fig:double-well} and \ref{fig:convex} and $\theta=-1$ in Figure~\ref{fig:repulsif}), at various temperatures.
 
\begin{figure}
\centering
\includegraphics[scale=0.3]{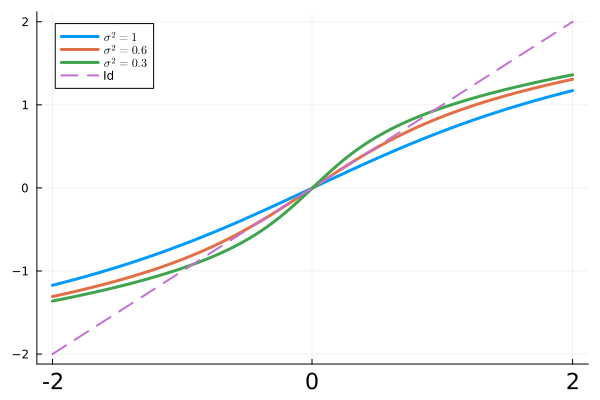}
\includegraphics[scale=0.3]{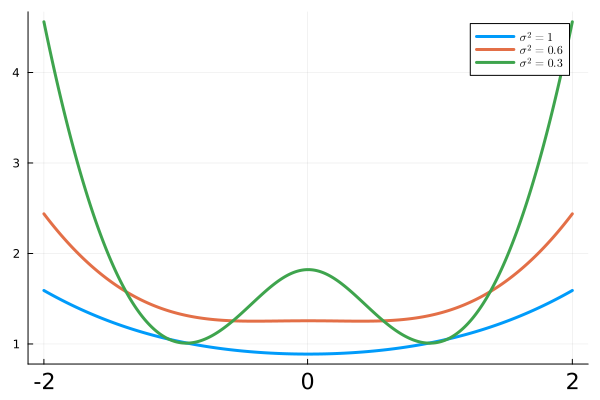}
\caption{Graph of $f$ (left) and $g$ (right) in the double-well attractive case for $\sigma^2\in\{1,0.6,0.3\}$} \label{fig:double-well}
\end{figure}

\begin{figure}
\centering
\includegraphics[scale=0.3]{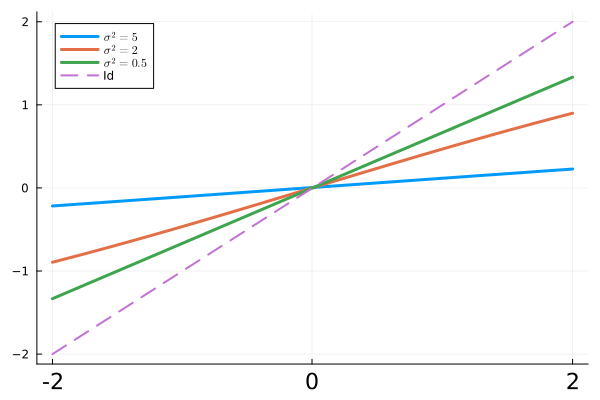}
\includegraphics[scale=0.3]{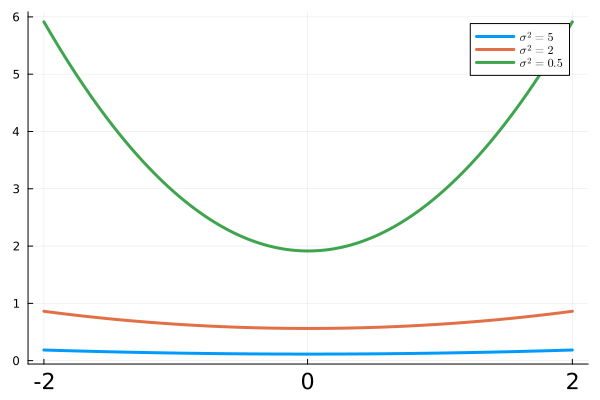}
\caption{Graph of $f$ (left) and $g$ (right) in the single-well attractive case for $\sigma^2\in\{5,2,0.5\}$} 
\label{fig:convex}
\end{figure}

\begin{figure}
\centering
\includegraphics[scale=0.3]{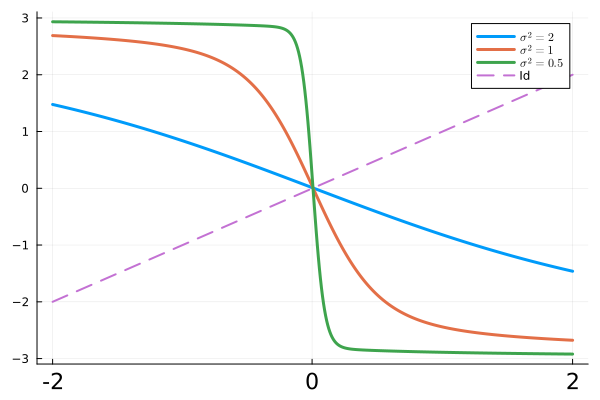}
\includegraphics[scale=0.3]{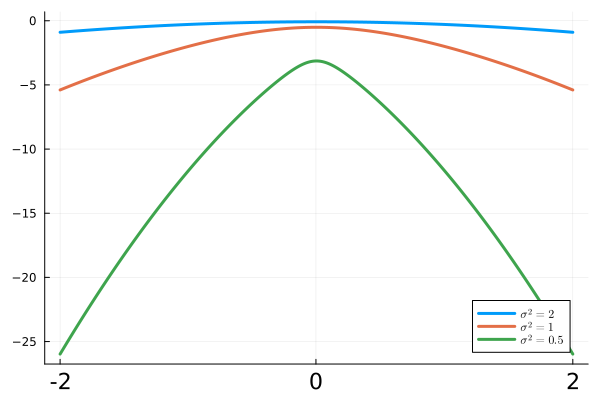}
\caption{Graph of $f$ (left) and $g$ (right) in the double-well repulsive case for $\sigma^2\in\{2,1,0.5\}$. At low temperature, $f'(0)<-1$, so that $0$ is an unstable fixed point of $x\mapsto f(x)$.} 
\label{fig:repulsif}
\end{figure}

\subsubsection{The one-dimensional double-well case}\label{sec:doublepuit1D}

In this section, $d=1$, $\theta>0$ and $V(x)=\frac{x^4}{4}-\frac{x^2}{2}$. In this one-dimensional attractive double-well case, the function $f$ has been precisely studied in e.g. \cite{Tugautdoublewell} (see also \cite{Minibatch}), and the following holds (see Figure~\ref{fig:double-well}).
\begin{itemize}
\item There exists $\sigma_c^2>0$ such that, for all $\sigma^2 \geqslant \sigma_c^2$, the unique fixed point of $f$ is $m=0$ and, for all $\sigma^2 <\sigma_c^2$, there are three such fixed points, $0,m_+,m_-$, with $m_+>0$ and  $m_-=-m_+$.
\item When $\sigma^2 < \sigma_c^2$, $f$ is an increasing function with  $0< f'(m_+)=f'(m_-) <1 < f'(0) $, $f'(m) \rightarrow 0$ as $|m|\rightarrow \infty$, $f(m) > m $ for $m\in(-\infty,m_-)\cup (0,m_+)$ and $f(m)<m$ for $m\in (m_-,0)\cup(m_+,\infty)$.
\item For $\sigma^2 = \sigma_c^2$, $f$ is an increasing function with $f'(m)<1$ for all $m\neq 0$,  $f'(m) \rightarrow 0$ as $|m|\rightarrow \infty$, $f'(0)=1$, $f''(0)=0$ and $f^{(3)}(0) <0$.
\item For $\sigma^2 > \sigma_c^2$, $f$ is an increasing function with $f'(m)\in(0,1)$ for all $m\in\R$ and  $f'(m) \rightarrow 0$ as $|m|\rightarrow \infty$.
\end{itemize}
Moreover, since $g'(m)= 2\theta (m-f(m))$,  the critical points of $g$ are the  fixed points of $f$ and, for $\sigma^2 < \sigma_c^2$, $g$ is  decreasing  on $(-\infty,m_-]$ and $ [0,m_+]$ and  increasing on $[m_-,0]$ and $[m_+,\infty)$. As a consequence of all this, below the critical temperature, we get a non-linear log-Sobolev inequality for any  non-centered $\rho\in\mathcal P_2(\R^d)$.  We introduce the notation
\begin{equation*}
    m(\rho) = \int_\R x \rho(x)\dd x.
\end{equation*}

\begin{prop}\label{prop:doublepuit}
In the double-well case  with quadratic interaction \eqref{eq:double_well} (with $\theta>0$, $d=1$) with $\sigma^2 <\sigma_c^2$, for any $\varepsilon>0$, there exists $\alpha_\varepsilon\in[0,1)$ and $\oeta_\varepsilon>0$ such that for any $m\geqslant \varepsilon$,
\begin{equation}
\label{eq:Contractdoublewell}
|f(m)-m_+| \leqslant \alpha_\varepsilon |m-m_+|
\end{equation}
and for any $\rho\in\mathcal P_2(\R)$ with $|m(\rho)|\geqslant \varepsilon$,
\begin{equation}
\label{eq:LSIdoublewell}
\mathcal F(\rho) - \inf \mathcal F \leqslant \oeta_\varepsilon \sigma^4 \mathcal I\po \rho|\Gamma(\rho)\pf\,.
\end{equation}
In other words,  for any $\varepsilon>0$,  \eqref{eq:loc_LSInonlineaire} holds over $\mathcal A_{\varepsilon} = \{\rho\in\mathcal P_2(\R), |m(\rho)|\geqslant\varepsilon\}$.
\end{prop}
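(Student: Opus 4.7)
The plan is to deduce the statement from Proposition~\ref{prop:New_fixed_point} applied in the parametric framework of Section~\ref{subsec:LSIgranular}, the main preliminary step being to upgrade the qualitative properties of $f$ recalled before the proposition into the uniform contraction~\eqref{eq:Contractdoublewell}.

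For the contraction, I would work with the ratio
\[
h(m) := \frac{|f(m) - m_+|}{|m - m_+|}\quad (m\neq m_+), \qquad h(m_+) := f'(m_+),
\]
which is continuous on $[\varepsilon, \infty)$ since $f\in\mathcal{C}^\infty(\R)$. On $[\varepsilon, m_+)$, the monotonicity of $f$ together with the property $f(m)>m$ for $m\in(0,m_+)$ gives $m< f(m)\leqslant m_+$, hence $h(m)\in[0,1)$; on $(m_+,\infty)$, the bounds $f(m)<m$ and $f(m)\geqslant m_+$ (by monotonicity) give once more $h(m)\in[0,1)$; at $m_+$ itself, $h(m_+)=f'(m_+)\in(0,1)$. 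Finally, as $m\to\infty$, L'H\^opital's rule combined with $f'(m)\to 0$ yields $h(m)\to 0$. Thus $h$ is continuous and pointwise strictly less than~$1$ on $[\varepsilon,\infty)$ with $h(m)\to 0$ at infinity, and a straightforward compactness argument (splitting $[\varepsilon,\infty)$ into a compact interval and a tail) yields $\alpha_\varepsilon:=\sup_{m\geqslant \varepsilon} h(m)<1$, which is~\eqref{eq:Contractdoublewell}.

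For the LSI, I would apply Proposition~\ref{prop:New_fixed_point} in the setting of Section~\ref{subsec:LSIgranular} with $\hil=\R$, $\varphi(x)=x$ (hence $\ell=1$), $R(\psi)=-\theta|\psi|^2$ (so that $R+\theta|\cdot|^2\equiv 0$ is convex), $\psi_*=m_+$ and $\mathcal{A}'=[\varepsilon,\infty)$. Assumption~\ref{assu:fixedpoint} is satisfied: the uniform log-Sobolev inequality~\eqref{eq:LSIlineaire_locale} follows from Bakry--\'Emery and Holley--Stroock applied to $V_0(x)+\theta|x|^2-2\theta xm$, which for every $m$ is a bounded perturbation of a strongly convex potential with curvature bound independent of $m$. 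The proposition then yields~\eqref{eq:LSIdoublewell} for any $\rho$ with $m(\rho)\geqslant \varepsilon$, centered at $\mathcal{F}(\rho_{m_+})$, with a constant $\oeta_\varepsilon$ depending explicitly on $\eta,\theta,\sigma$ and $\alpha_\varepsilon$. The case $m(\rho)\leqslant -\varepsilon$ follows by the symmetry $V(-x)=V(x)$, which implies $m_-=-m_+$ and provides the analogous contraction to $m_-$ with the same constant $\alpha_\varepsilon$. Finally, the decomposition $\mathcal{F}(\rho)=\sigma^2\mathcal{H}(\rho|\Gamma(\rho))+g(m(\rho))$, the recalled monotonicity of $g$, and $g(-m)=g(m)$ imply $\min_{m\in\R} g=g(m_\pm)$, so that $\inf \mathcal{F}=\mathcal{F}(\rho_{m_\pm})$, which allows one to replace the centering by $\inf\mathcal{F}$.

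The main obstacle is the uniform contraction on the unbounded interval $[\varepsilon,\infty)$: pointwise strict inequality $h<1$ is immediate from the position of $f$ relative to the diagonal, but uniformity at infinity is not automatic and rests crucially on the decay $f'(m)\to 0$ from the Tugaut analysis, while uniformity near $m_+$ uses the continuous extension $h(m_+)=f'(m_+)<1$.
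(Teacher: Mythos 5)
Your argument is correct and follows the paper's proof essentially line for line: you identify the same ratio (the paper calls it $\alpha(m)=\frac{f(m)-m_+}{m-m_+}$, you use $h(m)$ with absolute values), invoke the same qualitative properties of $f$ from the Tugaut analysis to get continuity, pointwise bound $<1$, and decay at infinity, hence uniformity of the contraction on $[\varepsilon,\infty)$, and then plug into Proposition~\ref{prop:New_fixed_point} in the parametric setting, concluding the $m\leqslant-\varepsilon$ case by symmetry. The one point where you are slightly more careful than the paper is the final observation that $\inf\mathcal{F}=\mathcal{F}(\rho_{m_\pm})$ (via the decomposition $\mathcal{F}=\sigma^2\mathcal{H}(\cdot|\Gamma(\cdot))+g\circ m$, the monotonicity of $g$, and its evenness), which is needed to pass from the centering at $\mathcal{F}(\rho_{\psi_*})$ provided by Proposition~\ref{prop:New_fixed_point} to the centering at $\inf\mathcal{F}$ stated in the proposition; the paper treats this as implicit.
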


\begin{rem}\label{rem:decomposition}
In the present setting, Assumption~\ref{assu:total} is satisfied and therefore Theorem~\ref{thm:main} applies so that we get an exponential convergence starting from a $\mathcal W_2$-ball centered on $\rho_{m_+}$ or $\rho_{m_-}$. This gives the first item of  Proposition~\ref{prop:double-well}. 
\end{rem} 

\begin{proof}
Since  $f(m) > m $ for $m\in  (0,m_+)$ and $f(m)<m$ for $m\in (m_+,\infty)$, the function $\alpha$ given by
\[\alpha(m) = \frac{f(m)-m_+}{m-m_+}\]
for $m\neq m_+$ and $\alpha(m_+)=f'(m_+)$ is continuous on $\R_+$ with values in $(0,1)$ on $(0,\infty)$. Moreover, since $f'(m) \rightarrow 0$ as $m\rightarrow \infty$, so does $\alpha$. As a consequence, for any $\varepsilon>0$, $\alpha_\varepsilon:= \sup_{m>\varepsilon} \alpha(m) <1$, which proves \eqref{eq:Contractdoublewell}. \rev{Using that $\mathcal F(\rho_{m_+})=\inf\mathcal F$,}   Proposition~\ref{prop:New_fixed_point} then gives \eqref{eq:LSIdoublewell} for $\rho\in\mathcal P_2(\R)$ with $m(\rho)\geqslant \varepsilon$. The case $m(\rho)\leqslant -\varepsilon$ is obtained by symmetry.
\end{proof}

\begin{rem}
\label{rem:supercritical}
The same arguments work in the super-critical case where $\sigma^2 >\sigma_c^2$ because in that case $f'(0)<1$. In that case the contraction $|f(m)|\leqslant \alpha |m|$ holds globally for $m\in\R$ for some $\alpha<1$. In other words, \eqref{eq:LSInonlineaire} holds for all $\sigma^2 >\sigma_c^2$.
\end{rem}

\begin{rem}\label{rem:explicit}
 To get explicit values for $\delta,\lambda,C$ in Proposition~\ref{prop:double-well}, it is sufficient to compute $f'(m_+)$ and to bound $\|f''\|_\infty$ (as in e.g. \cite{Minibatch}) to get an explicit $\delta>0$ such that  $f'(m) \leqslant (1+f'(m_+))/2=:\alpha <1$ for all $m$ with $|m-m_+|\leqslant \delta$. From this, Theorem~\ref{thm:main} gives explicit estimates in terms of $\sigma^2,\theta,\delta,\alpha$.
\end{rem}

\begin{rem}\label{rem:decomposition2}
Denoting $\mathcal A_{\varepsilon} = \{\rho\in\mathcal P_2(\R), |m(\rho)|\geqslant\varepsilon\}$, the previous result does not mean that $\mathcal F(\rho_t) - \inf\mathcal F \leqslant e^{-t/\oeta_\varepsilon} \po \mathcal F(\rho_t) - \inf\mathcal F \pf$ for all $t\geqslant 0$ as soon as $\rho_0 \in\mathcal A_{\varepsilon}$, because the  non-linear LSI \eqref{eq:LSIdoublewell} does not prevent  $T_{\mathcal A_\varepsilon}(\rho_0) < \infty$. In fact, we provide a counter-example in the next statement. 
\end{rem} 

In the setting of Proposition~\ref{prop:doublepuit}, the precise determination of the basins of attraction of $\rho_{m_+}$ and $\rho_{m_-}$ for the McKean-Vlasov dynamics~\eqref{eq:granular_media_1} remains an open question. The first item of Proposition~\ref{prop:double-well} provides a sufficient condition, together with an exponential rate of convergence. On the other hand, in~\cite{tugaut2013,Bashiri}, it is shown that if $\mathcal{F}(\rho_0)$ is sufficiently small, then the sign of $m(\rho_0)$ suffices to determine the limit of $\rho_t$. To complement these statements, we now show that without this condition on $\mathcal{F}(\rho_0)$, the sign of $m(\rho_t)$ may vary with $t$, so it is not enough to determine the limit of $\rho_t$. To our knowledge, this fact is known empirically but has never been explicitly evidenced.

\begin{prop}\label{prop:contre-Example}
Consider the granular media equation with $d=1$, $V(x)=\frac{x^4}{4}-\frac{x^2}{2}$ and $W(x,y)=\theta(x-y)^2$ for some $\theta>0$. Then, for any $\sigma^2>0$, there exist solutions of~\eqref{eq:granular_media} with $m(\rho_0)>0$ and $m(\rho_{t_0}) < 0$ for some $t_0>0$.
\end{prop}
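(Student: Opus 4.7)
The plan is to exploit the specific quartic--quadratic structure of $V$ and $W$ to derive an explicit ODE for $m_t := m(\rho_t)$, and then construct an initial condition that forces $m_t$ to cross zero in finite time.

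\textbf{ODE for the mean.} Testing the granular media equation \eqref{eq:granular_media} against $x$ and integrating by parts (the regularity of $\rho_t$ for $t > 0$ being provided by Proposition~\ref{prop:Ambrosio}), the interaction contribution $\int_\R 2\theta(x - m_t)\rho_t(x)\dd x$ vanishes identically by antisymmetry, leaving
\[
\frac{d}{dt} m_t = -\int_\R V'(x)\rho_t(x)\dd x = m_t - \int_\R x^3 \rho_t(x)\dd x.
\]
Crucially, the attractive interaction does not directly move the mean: only the confinement does, through its cubic part.

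\textbf{Choice of initial condition.} For parameters $R > 1$ (to be taken large) and $\delta \in (0,1)$ (small), set
\[
\rho_0 := (1 - \epsilon_R)\gamma_{-1} + \epsilon_R \gamma_R, \qquad \epsilon_R := \frac{1+\delta}{R+1},
\]
with $\gamma_\mu$ the density of $\mathcal{N}(\mu, 1)$. Then $\rho_0 \in \mathcal P_2(\R)$ has moments of all orders, $m(\rho_0) = \delta > 0$, and a direct calculation yields $\int_\R x^3 \rho_0(x)\dd x = \epsilon_R(R^3 + 3R) - 4(1-\epsilon_R) \sim R^2$ for $R$ large. Substituting into the ODE, $\left.\tfrac{d}{dt}m_t\right|_{t=0^+} \sim \delta - R^2$, which is strongly negative.

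\textbf{Propagation and conclusion.} To convert this initial decay into an actual sign change, I would establish continuity of $t \mapsto \int_\R x^3 \rho_t(x)\dd x$ at $t = 0^+$. The key ingredient is a moment bound: Ito's formula applied to $X_t^4$ along the associated McKean--Vlasov SDE produces a differential inequality dominated by the absorbing term $-\mathbb E[X_t^6]$, so $\sup_{t \in [0, T]}\int_\R x^4 \rho_t(x)\dd x$ is finite for some $T > 0$. Combined with the $\mathcal W_2$-continuity $\rho_t \to \rho_0$ (Proposition~\ref{prop:Ambrosio}(ii)), this gives uniform integrability and hence continuity of the cubic moment. Integrating the ODE from the first paragraph then produces $m_{t_1} \leq \delta - (R^2/2)\,t_1 < 0$ on a suitable interval $[0, t_1]$. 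The main obstacle is making this last step quantitative in $R$: the time window on which the cubic moment remains close to its initial value (controlled via bounds on $\mathbb E[X_t^5]$ appearing in its time-derivative) must be long compared to $\delta/R^2$, the time needed for the mean to flip. Explicit tracking of moments up to order six along the flow suffices to render these two scales compatible for $R$ large and $\delta$ small.
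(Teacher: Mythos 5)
Your observation that the quadratic interaction does not move the centre of mass, so that
\[
\frac{\dd}{\dd t}\,m_t \;=\; m_t - \int_\R x^3\,\rho_t(x)\,\dd x,
\]
is correct and is the same starting point as the paper's proof. Beyond that, however, your route diverges genuinely from the paper's. You fix a small initial mean $m_0=\delta$ and make $\int x^3\rho_0$ of order $R^2$ by pushing a fraction $\epsilon_R\sim 1/R$ of the mass to $x\approx R$, aiming to show $m_t$ is driven below zero \emph{before} the cubic moment has time to decay. The paper instead fixes $m_0=1$ (independent of the small parameter $\varepsilon$), puts mass $1-\varepsilon$ near $-1$ and $\varepsilon$ near $2/\varepsilon-1$, and argues by contradiction: if $m_t\geq0$ on $[0,1]$, then stochastic domination with the drift $-x^3$ and a closed differential inequality giving $\int x^2\rho^+_t\leq 1/t + C$ force the far-out mass to collapse while the mass near $-1$ drifts to a mean $\leq -1/2$, yielding $m_{t_0}<0$. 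The contradiction device lets the paper sidestep the delicate $R$-dependent bookkeeping that your approach requires, because only $\varepsilon$-free bounds are needed.

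The gap in your proposal is precisely in the paragraph you flag as ``the main obstacle''. Uniform integrability of $x^3$ (via a bound on $\int x^4\rho_t$) gives \emph{continuity} of $t\mapsto\int x^3\rho_t$, but continuity alone says nothing quantitative: the window on which $\int x^3\rho_t \gtrsim R^2$ shrinks like $1/R^2$ as $R\to\infty$, and the time needed to flip the mean is $\sim\delta/R^2$ --- these two scales are of the same order, so the conclusion hinges on a sharp comparison of constants, not just continuity. You would have to show that $\int x^5\rho_t$ stays $O(R^4)$ uniformly on $[0,c/R^2]$ (so that $|\frac{\dd}{\dd t}\int x^3\rho_t|\lesssim R^4$ there), that the resulting impulse $\int_0^{c/R^2}\int x^3\rho_s\,\dd x\,\dd s$ exceeds $\delta$ for $\delta$ small, and also that $m_t$ stays bounded on this window, since $m_t$ re-enters the drift and hence all the moment estimates. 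None of this is carried out, and the assertion that ``explicit tracking of moments up to order six suffices'' is plausible but unproven: it is the mathematical crux, not a routine afterthought. If you do wish to pursue this route, the clean way to close the moment hierarchy is the self-referential inequality the paper uses at second order, $\frac{\dd}{\dd t}\mathbb E[X_t^{2k}]\leq -c_k\mathbb E[X_t^{2k}]^{(k+1)/k}+C_k$, which gives $\mathbb E[X_t^{2k}]\lesssim t^{-k}+1$ without invoking still higher moments.
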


\begin{proof}

  For $\varepsilon\in[0,1]$, we consider an initial condition $\rho_0 = (1-\varepsilon) \delta_{-1} + \varepsilon \delta_{2/\varepsilon-1}$ and write $m_t = \int_{\R} x\rho_t(x)\dd x$. In particular, $m_0=1$ independently from $\varepsilon$. We consider the time-inhomogeneous SDE
\begin{equation}
\label{eq:eq_contreExample}
\dd X_t = -V'(X_t) \dd t - (X_t - m_t) \dd t  + \sqrt{2}\sigma \dd B_t = \po -X_t^3 + m_t\pf \dd t + \sqrt{2}\sigma \dd B_t \,.
\end{equation}
Let $\rho_t^+$ and $\rho_t^-$ be the law of the solution of \eqref{eq:eq_contreExample} with respective initial conditions $\delta_{2/\varepsilon-1}$ and $\delta_{-1}$. Then, $m_t = (1-\varepsilon) \rho_t^- + \varepsilon\rho_t^+$ for all $t\geqslant 0$. Assume by contradiction that $m_t \geqslant 0$ for all $t\in[0,1]$. Consider the solution of
\[\dd Y_t =   -Y_t^3  \dd t + \sqrt{2}\sigma \dd B_t \]
(with the same Brownian motion as \eqref{eq:eq_contreExample}) with $Y_0 = -1$. Taking $X_0$ distributed according to $m_0$ in \eqref{eq:eq_contreExample} (in particular, $Y_0 \leqslant X_0$ and $X_t\sim m_t$ for all $t\geqslant 0$), we get by monotonicity that $Y_t \leqslant X_t$ for all $t\in[0,1]$, and thus
\[m_t \geqslant \mathbb E(Y_t)\,,\qquad \int_{\R} x^3 \rho_t(\dd x) \geqslant \mathbb E(Y_t^3)\,.\]
 Notice that the law of $Y$ is independent from $\varepsilon$. Now,
 \[\frac{\dd}{\dd t} m_t = - \int_{\R} x^3 \rho_t(\dd x) + m_t \leqslant -\mathbb E(Y_t^3) + m_t\,, \]
 which together with $m_0=1$ shows that $M:=\sup_{\varepsilon\in[0,1]}\sup_{t\in[0,1]} m_t <\infty$. By comparing the solution of \eqref{eq:eq_contreExample} initialized with $X_0=-1$ (so that $X_t\sim \rho_t^-$) to the solution of 
 \[\dd Z_t =   (-Z_t^3+M)  \dd t + \sqrt{2}\sigma \dd B_t \]
 with $Z_0 = -1$ (whose law is again independent from $\varepsilon$), we get that $X_t \leqslant Z_t$ for all $t\in[0,1]$ and since $t\mapsto \mathbb E(Z_t)$ is continuous we get that there exists $t_0\in(0,1]$, independent from $\varepsilon\in[0,1]$, such that
 \[\int_{\R} x \rho_{t_0}^-(\dd x) \leqslant -\frac12\,.\]
Now, writing $h(t) = \int_{\R} x^2 \rho_t^+(\dd x)$, we see that
\[h'(t)  = - 2 \int_{\R} x^4 \rho_t^+(\dd x)+ 2 m_t^2 + 2 \sigma^2 \leqslant -2 h^2(t) + 2(M+\sigma^2)\,.  \]
As a consequence, if $h^2$ goes below $2(M+\sigma^2)$ at some time, it stays below afterwards, and while $h^2$ is above $2(M+\sigma^2)$ it holds
\[h'(t) \leqslant  -h^2(t)\,, \]
hence $h(t) \leqslant (t + 1/h(0))^{-1} \leqslant 1/t$. In any case, for all $t\in[0,1]$,
\[h(t) \leqslant \frac{1}{t} + 2(M+\sigma^2)\,.\]
By Cauchy-Schwarz, 
\[\sup_{\varepsilon\in[0,1]} \int_{\R} x \rho_{t_0}^+ (\dd x) \leqslant \sup_{\varepsilon\in[0,1]}   \sqrt{h(t_0)} \leqslant \sqrt{\frac{1}{t_0} + 2(M+\sigma^2)}\,, \]
and thus
\[m_{t_0} \leqslant -\frac{1-\varepsilon}{2} + \varepsilon \sqrt{\frac{1}{t_0} + 2(M+\sigma^2)}\,,\]
which is negative for $\varepsilon$ small enough, leading to a contradiction with the assumption that $m_t\geqslant 0$ for all $t\in[0,1]$.
\end{proof}

To conclude the discussion in the one-dimensional double-well case, let us notice that the critical temperature $\sigma^2= \sigma_c^2$ provides an example of the degenerate situation addressed in Section~\ref{sec:degenerate}.

\begin{prop}\label{prop:doublewelldegenerate}
In the double-well case  with quadratic interaction \eqref{eq:double_well} (with $\theta>0$, $d=1$) with $\sigma^2 =\sigma_c^2$,  there exist $\beta>0$ and $\oeta>0$ such that for any $m\in\R$,
\begin{equation}
\label{eq:Contractdoublewell-degenerate}
|m| \leqslant  \beta \po  |m-f(m)| + |m-f(m)|^{1/3}\pf\,,
\end{equation}
and for any $\rho\in\mathcal P_2(\R)$
\begin{equation}
\label{eq:LSIdoublewell-degenerate}
\mathcal F(\rho) - \inf \mathcal F \leqslant \oeta   \co \mathcal I\po \rho|\Gamma(\rho)\pf + \po  \mathcal I\po \rho|\Gamma(\rho)\pf \pf^{1/3}\cf  \,.
\end{equation}
\end{prop}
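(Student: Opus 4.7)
The plan is to first establish the mixed pointwise bound \eqref{eq:Contractdoublewell-degenerate} through a three-regime analysis of $f$ on $\R$, then to transfer it to the non-linear LSI \eqref{eq:LSIdoublewell-degenerate} by adapting the argument of Proposition~\ref{prop:New_fixed_point_degenere} to a sum of two powers.

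For the pointwise bound, by symmetry it suffices to treat $m\geqslant 0$, and I split $[0,\infty)$ into three zones. First, near $m=0$: the conditions $f'(0)=1$, $f''(0)=0$, $f^{(3)}(0)<0$ recalled in the bullet points before Proposition~\ref{prop:doublepuit}, combined with the smoothness of $f$ obtained by differentiating under the integral $f(m)=\int x\rho_m$, yield by Taylor's formula $m-f(m)=(-f^{(3)}(0)/6)m^3+O(m^5)$; hence there exist $c,r>0$ such that $|m-f(m)|\geqslant c|m|^3$ on $|m|\leqslant r$, i.e.\ $|m|\leqslant c^{-1/3}|m-f(m)|^{1/3}$. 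Second, on a compact annulus $r\leqslant|m|\leqslant R$: since $0$ is the only fixed point of $f$ at the critical temperature and $f(m)<m$ for $m>0$, the quotient $(m-f(m))/|m|$ is continuous and strictly positive, hence bounded below by some $c'>0$, yielding $|m|\leqslant C_2|m-f(m)|$. Third, for $|m|$ large: since $f'\to 0$ at infinity, I choose $R$ with $|f'|\leqslant 1/2$ on $[R,\infty)$; then $f(m)\leqslant f(R)+(m-R)/2$, so $m-f(m)\geqslant m/2-(f(R)-R/2)$, giving $|m|\leqslant 4|m-f(m)|$ for $|m|$ sufficiently large. Bounding $|m-f(m)|$ by $|m-f(m)|+|m-f(m)|^{1/3}$ in the second and third zones, and $|m-f(m)|^{1/3}$ by the same sum in the first zone, gives \eqref{eq:Contractdoublewell-degenerate} with some $\beta>0$.

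To obtain the LSI \eqref{eq:LSIdoublewell-degenerate}, I observe that at $\sigma^2=\sigma_c^2$ the only fixed point of $f$ is $0$, and, using the decomposition $\mathcal F(\rho)=\sigma^2\mathcal H(\rho|\Gamma(\rho))+g(m(\rho))$ from Section~\ref{subsec:Hilbert} together with $g'(m)=2\theta(m-f(m))$ and the sign information $f(m)<m$ for $m>0$, $f(m)>m$ for $m<0$, the function $g$ attains its unique global minimum at $0$; hence $\inf\mathcal F=g(0)=\mathcal F(\rho_0)$. By Lemma~\ref{lem:sandwich} applied with $\mathcal C(\mu_0,\mu_1)=\theta|m(\mu_0)-m(\mu_1)|^2$,
\begin{equation*}
\mathcal F(\rho)-\mathcal F(\rho_0)\leqslant \sigma^2\mathcal H(\rho|\Gamma(\rho))+\theta|m(\rho)|^2.
\end{equation*}
Applying \eqref{eq:Contractdoublewell-degenerate} at $m=m(\rho)$ and using $|m(\rho)-f(m(\rho))|=|\varphi(\rho)-\varphi(\Gamma(\rho))|\leqslant\mathcal W_2(\rho,\Gamma(\rho))$ (here $\ell=1$), I bound $|m(\rho)|^2$ by a constant multiple of $\mathcal W_2^2(\rho,\Gamma(\rho))+\mathcal W_2^{2/3}(\rho,\Gamma(\rho))$. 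The Talagrand inequality implied by \eqref{eq:LSIlineaire_locale} converts these into terms controlled by $\mathcal H(\rho|\Gamma(\rho))$ and $\mathcal H(\rho|\Gamma(\rho))^{1/3}$, and a final application of \eqref{eq:LSIlineaire_locale} produces the claimed bound.

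The main technical hurdle is the Taylor expansion step: one must justify that $f\in\mathcal C^3$ near $0$, which reduces to differentiation under the Gibbs integral and relies on the polynomial growth of $V$, and one must use the strict inequality $f^{(3)}(0)<0$ to pin down the cubic order of degeneracy and obtain the exponent $1/3$ in \eqref{eq:Contractdoublewell-degenerate}. The remaining steps are essentially bookkeeping combining Lemma~\ref{lem:sandwich}, Talagrand, \eqref{eq:LSIlineaire_locale}, and the absorption of all constants into a single $\oeta$.
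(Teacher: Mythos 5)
Your proof is correct and follows essentially the same route as the paper: a Taylor expansion of $f$ around $0$ exploiting $f'(0)=1$, $f''(0)=0$, $f^{(3)}(0)<0$ to get the cubic-order bound near the origin, a contraction-type bound away from $0$, and then the transfer to the free energy via Lemma~\ref{lem:sandwich}, the Talagrand inequality and \eqref{eq:LSIlineaire_locale} exactly as in Proposition~\ref{prop:New_fixed_point_degenere}. The only difference is cosmetic: you split the region $\{|m|\geqslant\varepsilon\}$ into a compact annulus and an unbounded tail, whereas the paper treats it in one shot by observing that $\alpha(m)=f(m)/m$ is continuous on $\{|m|\geqslant\varepsilon\}$, takes values in $(0,1)$ there, and tends to $0$ at infinity, so $\alpha_\varepsilon=\sup_{|m|\geqslant\varepsilon}\alpha(m)<1$, giving $|m|\leqslant|m-f(m)|+\alpha_\varepsilon|m|$ directly — your three-zone treatment buys nothing over this, but costs nothing either.
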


\begin{rem}
By contrast to \eqref{eq:degenerate_condition}, which may only  hold locally, in \eqref{eq:Contractdoublewell-degenerate} we add a linear term, because in this very simple case this inequality in fact holds globally for all $m\in\R $ (and the linear term becomes dominant at long range). This does not change the conclusion on the free energy, in the sense that \eqref{eq:LSIdoublewell-degenerate} is similar to \eqref{eq:degenerateLSI}. 
\end{rem}

\begin{proof}
As in the proof of Proposition~\ref{prop:doublepuit}, we see that, for any $\varepsilon>0$, $\alpha(m) = f(m)/m$ is continuous on $\R_+\setminus[-\varepsilon,\varepsilon]$ with values in $(0,1)$, from which $\alpha_\varepsilon:= \sup_{|m|\geqslant \varepsilon} \alpha(m) <1$. Then, for $m \notin[-\varepsilon,\varepsilon]$, $|m|\leqslant |m-f(m)|+|f(m)| \leqslant |m-f(m)| + \alpha_{\varepsilon} |m|$, which proves that \eqref{eq:Contractdoublewell-degenerate} holds for such $m$ with $\beta = 1/(1-\alpha_\varepsilon)$.

Denoting $s = -f^{(3)}(0)>0$, we get that $f(m) = m - s m^3 + o(m^3)$ as $m\rightarrow 0$. Let $\varepsilon>0$ be such that $|f(m)-m+s m^3| \leqslant s|m|^3 /2$ for all $m \in[-\varepsilon,\varepsilon]$. Then, for such $m$,
\[|m|^3  \leqslant 2\frac{|f(m)-m|}{s}\,,\]
which concludes the proof of \eqref{eq:Contractdoublewell-degenerate}.

The proof of \eqref{eq:LSIdoublewell-degenerate} is then exactly the same as the proof of   \eqref{eq:degenerateLSI} from \eqref{eq:degenerate_condition} \rev{(since $\mathcal F(\rho_{m_*}) = \inf \mathcal F$ with $m_*=0$)}.
\end{proof}

\begin{rem}\label{rem:double-puit-degenere}
The second part of Proposition~\ref{prop:double-well} is a consequence of  Proposition~\ref{prop:doublewelldegenerate}, following the arguments that led to \eqref{eq:EntropyDecayDegenerate} (the only difference being that it is not necessary to check that the solution starts and remains in some neighborhood of the stationary solution since \eqref{eq:LSIdoublewell-degenerate} holds uniformly over $\mathcal P_2(\R^d)$).
\end{rem}

\subsubsection{Localization in the low-temperature regime}\label{sec:localization}

Consider now the granular  media  equation \eqref{eq:granular_media} on $\R^d$ with $W(x,y) = |x-y|^2$ (i.e. we take $\theta=1$ for simplicity, which can always be enforced up to rescaling $V$ and $\sigma^2$). Let $a\in\R^d$ be a non-degenerate local minimizer of $V$ such that $a$ is a global minimizer of $x\mapsto h_0(x)=V(x)+|x-a|^2$. It is proven in \cite{tugaut2014self} that, under additional technical conditions, there exists a family $(\rho_{*,\sigma})_{\sigma>0}$ such that $\mathcal W_2(\rho_{*,\sigma},\delta_a) \rightarrow 0 $ as $\sigma\rightarrow 0$ and, for all $\sigma>0$, $\rho_{*,\sigma}$ is a stationary solution of  \eqref{eq:granular_media}  (at temperature $\sigma$). We want to apply Corollary~\ref{cor:|nabla_f|<1} in this context to get that $\rho_{*,\sigma}$ is stable and get local convergence rates.

In fact we will work in a slightly more general setting. Consider the general case \eqref{eq:EHilbert}. For simplicity, we restrict the study to the finite-dimensional case where $\hil = \R^p$ for some $p\geqslant 1$ and  $R(\psi)=-|\psi|^2$. Moreover, we write $V(x)= V_0(x) + R\po \varphi(x)\pf$. In other words, in all this section,
\begin{eqnarray*}
\mathcal E(\mu) &= & \int_{\R^d} V(x) \mu(\dd x) + R\po \varphi(\mu)\pf - \int_{\R^d} R\po \varphi(x)\pf\mu(\dd x) \\
&=& \int_{\R^d} V(x)\mu(\dd x) + \frac12 \int_{\R^d} |\varphi(x) - \varphi(y)|^2 \mu(\dd x)\mu(\dd y)\,.
\end{eqnarray*}
Denoting by $g_{\sigma}$ the function defined in \eqref{eq:gsigma} to emphasize the dependency in the temperature, we see that, for any $\psi\in\hil$,
\begin{eqnarray*}
g_{\sigma}(\psi) &=& -|\psi|^2 + 2 |\psi|^2  - \sigma^2 \ln \int_{\R^d} e^{- \frac{1}{\sigma^2}\co  V(x) + |\varphi(x)|^2 -2 \psi  \cdot \varphi(x)\cf }\dd x\\
& = &  - \sigma^2 \ln \int_{\R^d} e^{- \frac{1}{\sigma^2}\co  V(x) + |\varphi(x)-\psi|^2 \cf }\dd x\,.
\end{eqnarray*}
As $\sigma\rightarrow 0$,
\[g_{\sigma}(\psi) \rightarrow g_0(\psi) := \inf_{x\in\R^d}\{ V(x) + |\varphi\po x\pf -\psi|^2\} \,, \]
and this convergence holds uniformly for $\psi$ in compact sets of $\R^p$.
We are interested in critical points $\rho_{*,\sigma}$ which are localized at low temperature at some point $a\in\R^d$, in the sense that $\rho_{*,\sigma}\rightarrow \delta_a$ as $\sigma$ vanishes. This implies that $\varphi(\rho_{*,\sigma}) \rightarrow \varphi(a)$. On the other hand, for a fixed $\psi$, if $x\mapsto V(x) + |\varphi(x)-\psi|^2$ has a unique global minimum $x_0$, then $\rho_{\psi}$ converges as $\sigma$ vanishes to a Dirac mass at $x_0$. Hence, we expect $\rho_{*,\sigma}$ to converge to a Dirac mass at the minimizer of $x\mapsto V(x)+|\varphi(x)-\varphi(a)|^2$, and thus we need this minimizer to be $a$. We retrieve the condition of \cite{tugaut2014self}.

\begin{rem}
If we do not assume that $R(\psi)=-|\psi|^2$ but still write $V(x) = V_0(x) +R(\varphi(x))$, the previous computation gives 
\[
g_{\sigma}(\psi) =   - \sigma^2 \ln \int_{\R^d} e^{- \frac{1}{\sigma^2} h(x,\psi) }\dd x\,,
\]
with a modulated energy 
\[h(x,\psi) =   R(\psi) - R(\varphi(x)) + \na R(\psi) \cdot \po\varphi(x)  - \psi\pf  +   V(x) = M\po\psi,\varphi(x)\pf + V(x)  \,, \]
with the notation of Remark~\ref{rem:Bregman}. As $\sigma$ vanishes, $g_\sigma(\psi)$ converges toward $\inf_{x\in \R^d} h(x,\psi)$.
\end{rem}

In view of the previous discussion, we work under the following setting:

\begin{assu}\label{assu:low_temp}
In the case \eqref{eq:EHilbert}, $\hil=\R^p$, $R(\psi)=-|\psi|^2$, $V_0(x) = V(x) - R\po \varphi(x)\pf$, $V\in\mathcal C^3(\R^d,\R)$, $\varphi\in\mathcal C^3(\R^d,\R^p)$, the following conditions hold:
\begin{itemize}
    \item  $a\in\R^d$ is a non-degenerate local minimizer of $V$ (i.e. $\na^2 V(a)>0$) and it is the unique global minimizer of $x\mapsto V(x) + |\varphi(x)-\varphi(a)|^2$.
    \item  $V$ goes to infinity at infinity and $\int_{\R^d} |x|^2 e^{- \beta V(x)} \dd x < \infty$ for some $\beta>0$. 
\end{itemize}

\end{assu}

\begin{prop}\label{prop:multiwell_new}
Under Assumptions~\ref{assu:fixedpoint} and \ref{assu:low_temp}, assuming moreover that \eqref{eq:onesided} holds for some $\kappa_1>0$, there exist $r_0,\sigma_0^2>0$ and a family $(\rho_{*,\sigma})_{\sigma\in(0,\sigma_0]}$ with $\mathcal W_2(\rho_{*,\sigma},\delta_a) \rightarrow 0$ as $\sigma\rightarrow 0$ such that the following holds: for each $\sigma\in(0,\sigma_0]$, $\rho_{*,\sigma}$ is a stationary solution of the McKean-Vlasov equation \eqref{eq:granular_media_2} at temperature $\sigma^2$ and there exist $C,\oeta>0$ such that for all initial condition $\rho_0 \in  \mathcal B_{\mathcal W_2}(\delta_a,r_0)$, along the flow \eqref{eq:granular_media_2}  at temperature $\sigma^2$, for all $t\geqslant 1$,
\begin{equation}
    \label{eq:WHFlocalization}
    \mathcal W_2^2(\rho_t,\rho_{*,\sigma}) + \sigma^2 \mathcal H(\rho_t|\rho_{*,\sigma}) + \mathcal F(\rho_t) - \mathcal F(\rho_{*,\sigma}) \leqslant C e^{-t/\oeta} \mathcal W_2^2(\rho_0,\rho_{*,\sigma})\,.
\end{equation}
    
    Moreover,  if we assume furthermore that $x\mapsto V(x) + |\varphi(x)-\psi|^2$ is strongly convex uniformly over $\psi$ in a neighborhood of $\varphi(a)$, then the previous statement holds with $\oeta,C$ which are independent from  $\sigma\in(0,\sigma_0]$.
\end{prop}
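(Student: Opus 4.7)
The plan is to apply Corollary~\ref{cor:|nabla_f|<1} to obtain~\eqref{eq:loc_LSInonlineaire} near some fixed point $\psi_{*,\sigma}$ of $f$ lying near $\varphi(a)$, then invoke Theorem~\ref{thm:main}. The three tasks are: (i) constructing $\psi_{*,\sigma}$; (ii) showing $|\na_\hil f(\psi_{*,\sigma})| < 1$ for $\sigma$ small; (iii) in the uniformly strongly convex case, tracking all constants in $\sigma$.

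First I would establish a Laplace-type expansion of $\rho_\psi$ uniformly in $\psi$ in a small neighborhood $\mathcal N$ of $\varphi(a)$. By Assumption~\ref{assu:low_temp}, the potential $U_\psi(x) := V(x) + |\varphi(x) - \psi|^2$ admits, for $\psi$ sufficiently close to $\varphi(a)$, a unique non-degenerate global minimum $x_\psi$ close to $a$, since $\na^2 U_{\varphi(a)}(a) = \na^2 V(a) + 2 A^T A$ is positive definite with $A := D\varphi(a)$; the implicit function theorem gives $\psi \mapsto x_\psi \in \mathcal C^2(\mathcal N)$. Standard Laplace asymptotics (using the growth and integrability of $V$ in Assumption~\ref{assu:low_temp} to handle tails) yield, as $\sigma \to 0$, uniformly in $\psi \in \mathcal N$,
\begin{equation*}
f(\psi) = \varphi(x_\psi) + O(\sigma^2), \qquad \frac{1}{\sigma^2}\mathrm{covar}_{\rho_\psi}(\varphi(X)) = D\varphi(x_\psi) (\na^2 U_\psi(x_\psi))^{-1} D\varphi(x_\psi)^T + O(\sigma).
\end{equation*}
Since $\psi \mapsto \varphi(x_\psi)$ is smooth and fixes $\varphi(a)$, a Banach fixed-point argument (or Brouwer's theorem on a small closed ball) applied to $f$ produces, for every $\sigma \in (0,\sigma_0]$, a fixed point $\psi_{*,\sigma}$ of $f$ with $\psi_{*,\sigma} \to \varphi(a)$, so that $\rho_{*,\sigma} := \rho_{\psi_{*,\sigma}}$ satisfies $\mathcal W_2(\rho_{*,\sigma},\delta_a) \to 0$.

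Next, \eqref{eq:u1naR2fu2} combined with $\na_\hil^2 R = -2 I$ reads $\na_\hil f(\psi) = (2/\sigma^2)\mathrm{covar}_{\rho_\psi}(\varphi(X))$, so the Laplace expansion together with $\psi_{*,\sigma} \to \varphi(a)$ gives
\begin{equation*}
\na_\hil f(\psi_{*,\sigma}) \longrightarrow M := 2 A(\na^2 V(a) + 2 A^T A)^{-1} A^T.
\end{equation*}
Letting $s_1,\ldots,s_{\min(p,d)}\geqslant 0$ denote the singular values of $B := A(\na^2 V(a))^{-1/2}$, the identity $M = 2 B(I + 2 B^T B)^{-1} B^T$ and the singular value decomposition of $B$ show that $M$ is symmetric positive semi-definite with operator norm $\max_i 2 s_i^2/(1 + 2 s_i^2) < 1$. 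Hence $|\na_\hil f(\psi_{*,\sigma})| < 1$ for $\sigma_0$ small enough, and Corollary~\ref{cor:|nabla_f|<1} yields the local \eqref{eq:loc_LSInonlineaire} on a $\mathcal W_2$-ball $\mathcal A_\sigma$ around $\rho_{*,\sigma}$. Assumption~\ref{assu:fixedpoint}, \eqref{eq:onesided} and Remark~\ref{rem:LellL'} ensure Assumption~\ref{assu:total} is met, and Theorem~\ref{thm:main} then delivers~\eqref{eq:WHFlocalization} for initial data in a ball $\mathcal B_{\mathcal W_2}(\rho_{*,\sigma},\delta_\sigma')$; since $\rho_{*,\sigma} \to \delta_a$, there exists $r_0 > 0$ with $\mathcal B_{\mathcal W_2}(\delta_a, r_0) \subset \mathcal B_{\mathcal W_2}(\rho_{*,\sigma},\delta_\sigma')$ for all sufficiently small $\sigma$, which yields the main claim.

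The hard part is the uniformity in $\sigma \in (0,\sigma_0]$ under the additional uniform strong convexity assumption. If $\na^2 U_\psi \geqslant c I$ for $\psi$ near $\varphi(a)$, Bakry--Émery yields a uniform log-Sobolev constant $\eta \leqslant \sigma^2/c$ for the family $(\rho_\psi)$, and Brascamp--Lieb gives
\begin{equation*}
\frac{1}{\sigma^2}\mathrm{covar}_{\rho_\psi}(\varphi) \leqslant \int_{\R^d} D\varphi (\na^2 U_\psi)^{-1} D\varphi^T \dd \rho_\psi;
\end{equation*}
combined with the uniform Laplace convergence towards $M$ (whose operator norm is $<1$), this allows one to fix a contraction constant $\alpha < 1$ in~\eqref{eq:contract_fixedpoint} independent of $\sigma \in (0,\sigma_0]$. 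Plugging $\eta = O(\sigma^2)$ into Proposition~\ref{prop:New_fixed_point} gives
\begin{equation*}
\mathcal F(\rho) - \mathcal F(\rho_{*,\sigma}) \leqslant \frac{\sigma^2}{c}\left(\sigma^2 + \frac{4\sigma^2 \theta \ell^2}{c(1-\alpha)^2}\right) \mathcal I(\rho|\Gamma(\rho)) = \sigma^4 \oeta\, \mathcal I(\rho|\Gamma(\rho))
\end{equation*}
with $\oeta$ independent of $\sigma$; revisiting the dependence on $\sigma$ of the constant $q_1$ in~\eqref{eq:qt} and of the constants in Theorem~\ref{thm:main} then confirms that $C$, $\oeta$ and $r_0$ may all be chosen uniformly in $\sigma \in (0,\sigma_0]$.
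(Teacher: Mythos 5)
Your proposal is correct and follows essentially the same route as the paper: Laplace expansion of $\rho_\psi$ near $\varphi(a)$, identification of $\nabla_\hil f(\psi)$ with $\frac{2}{\sigma^2}\mathrm{covar}_{\rho_\psi}(\varphi)$, proof that its limit has operator norm $<1$, then Corollary~\ref{cor:|nabla_f|<1} and Theorem~\ref{thm:main}. Two small cosmetic differences are worth noting. First, for the existence of the fixed point $\psi_{*,\sigma}$, you use a Banach/Brouwer argument on a small ball around $\varphi(a)$ (which works, provided the Laplace control on $\nabla_\hil f$ and on $f(\varphi(a))-\varphi(a)$ is indeed uniform), while the paper proceeds via the localization of a minimizer of $g_\sigma$, i.e. it shows that $g_\sigma$ attains its minimum over $\mathcal B(\varphi(a),\varepsilon)$ in the interior and then uses \eqref{eq:nag} to conclude this minimizer is a fixed point of $f$; both constructions produce the same $\psi_{*,\sigma}\to\varphi(a)$. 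Second, for the bound $|M|<1$ you use an SVD of $B=A(\nabla^2 V(a))^{-1/2}$, whereas the paper gives a short direct estimate using the positivity of $\nabla^2 V(a)$; again, equivalent. The genuine content you compress — and which the paper devotes an entire step to — is the rigorous justification that the Laplace approximation for $\frac{1}{\sigma^2}\mathrm{covar}_{\rho_\psi}(\varphi)$ holds \emph{uniformly} over $\psi$ in a $\sigma$-independent neighborhood of $\varphi(a)$ (this is nontrivial because both the shifted minimizer $z_\sigma$ and the modulated potential $h_\sigma$ depend on $\sigma$, and the tail integrals must be controlled using the growth and integrability assumptions on $V$). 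Your sketch simply asserts "standard Laplace asymptotics", so this is where a referee would push back, but the structure of the argument is sound and the uniformity claim is indeed true; the Brascamp--Lieb inequality you invoke for the uniform-convexity case is not strictly necessary once the uniform Laplace convergence is established, though it is a legitimate a priori control.
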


\begin{rem}
In particular, if $V$ has several local minimizers $a_1,\dots,a_n$ and the interaction is strong enough so that $a_j$ is the unique global minimizer of $V+|\varphi-\varphi(a_j)|^2$ for several $j$, then, at a sufficiently low temperature, there are several stable stationary solutions to the granular media equation. By contrast, as discussed in Section~\ref{sec:convex_interaction_faible}, in the same setting, at sufficiently high temperature or sufficiently weak interaction (i.e. multiplying $\varphi$ by a sufficiently small factor $\varepsilon>0$), uniqueness holds. This shows that phase transitions occur as the temperature varies. See \cite{Pavliotis,carrillo2020long} on this topic. 
\end{rem}

\begin{proof}
First, to get the existence of $\rho_{*,\sigma}$, we reason in a first step as in  Proposition~\ref{prop:g_minima}, namely we identify that $g_{\sigma}$ should have a local minimizer close to $\varphi(a)$ for $\sigma$ small enough. Second, in the rest of the proof, we show that  Corollary~\ref{cor:|nabla_f|<1} applies. This  gives a local non-linear LSI and thus a convergence rate thanks to Theorem~\ref{thm:main} (since, as discussed in Remark~\ref{rem:LellL'}, in the present case where $R$ is quadratic, the conditions \eqref{eq:convexity-c_W2} and \eqref{eq:Lipshitz} of Assumption~\ref{assu:total} are implied by Assumption~\ref{assu:fixedpoint}).

\medskip

\noindent\textbf{Step 1.} Fix $r\in(0,1]$ (which later on will be assumed sufficiently small, but always independently from $\sigma$). For $\psi \in\R^p$, set
\[\zeta(\psi,r):= \inf\{V(x)+|\varphi(x)-\psi|^2,\ |x-a|\geqslant r\} \,.\]
Let $M\geqslant 1$ be such that $V(x)\geqslant \sup\{V(y),\ y\in\mathcal B(a,1)\} + 
(\ell+1)^2$ for all $x\in\R^d$ with $|x|\geqslant M$. Then, for $\psi \in \mathcal B(\varphi(a),1)$ and $x,y\in\R^d$ with $|x|\geqslant M$ and $|y-a|\leqslant 1$,
\begin{align*}
    V(y) + |\varphi(y)-\psi|^2 &\leqslant V(y) + \left(|\varphi(y)-\varphi(a)| + |\varphi(a)-\psi|\right)^2\\
    &\leqslant V(y) + \left(\ell |y-a| + |\varphi(a)-\psi|\right)^2 \\
    & \leqslant V(y) + (\ell+1)^2 \\
    & \leqslant  V(x)  \\
    & \leqslant V(x) +  |\varphi(x)-\psi|^2\,. 
\end{align*} 
As a consequence, since $r\in(0,1]$, for  $\psi \in \mathcal B(\varphi(a),1)$,
\begin{eqnarray*}
\zeta(\psi,r) &=& \inf\{V(x)+|\varphi(x)-\psi|^2,\ |x-a|\geqslant r,\ |x|\leqslant M\} \\
& \geqslant & \zeta\po\varphi(a),r\pf - 2 |\psi - \varphi(a)| \sup_{|x|\leqslant M}|\varphi(x)-\varphi(a)|\,,
\end{eqnarray*}
where, for the second line, we have used that for $x\in\mathcal B(0,M)$,
\begin{align*}
    V(x)+|\varphi(x)-\psi|^2 & = V(x)+|\varphi(x)-\varphi(a)|^2 - 2 \po \varphi(x)-\varphi(a)\pf \cdot \po \psi-\varphi(a)\pf   + |\psi- \varphi(a)|^2 \\
    & \geqslant V(x)+|\varphi(x)-\varphi(a)|^2- 2 |\psi - \varphi(a)| \sup_{|x|\leqslant M}|\varphi(x)-\varphi(a)|\,.
\end{align*}
On the other hand, since $a$ is the unique global minimizer of $x\mapsto V(x)+|\varphi(x)-\varphi(a)|^2$, $\zeta(\varphi(a), r) > V(a)$. Thus, we can find $\varepsilon_0>0$ (which depends on $r$) such that for all $\varepsilon\in(0,\varepsilon_0]$ and  $\psi \in \mathcal B(\varphi(a),\varepsilon)$,
\[ \zeta\po \psi,r\pf > V(a) + \varepsilon^2\]
(notice that the condition becomes weaker as $\varepsilon$ decreases). As a consequence, for all  $\psi \in \mathcal B(\varphi(a), \varepsilon )$,
\begin{equation}
\label{eq:g0psi}
    g_0(\psi) = \inf_{x\in\mathcal B(a,r)}\{ V(x) + |\varphi\po x\pf -\psi|^2\}
\end{equation}
(since, for $x\notin \mathcal B(a,r)$, $V(x)+|\varphi(x)-\psi|^2 \geqslant \zeta(\psi,r) > V(a) + |\varphi(x)-\psi|^2$). 
Assuming that $r$ is small enough so that $a$ is the unique global minimizer of $V$ over $\mathcal B(a,r)$, we get that $g_0(\psi)>V(a)=g_0\po \varphi(a)\pf$ for all $\psi \in \mathcal B(\varphi(a), \varepsilon )\setminus \{\varphi(a)\}$, and in particular by continuity
\[\inf\{g_0(\psi),\ |\psi - \varphi(a)|=\varepsilon\} - V(a)>0\,.\]
By uniform convergence on compact sets of $g_{\sigma}$ toward $g_0$ as $\sigma$ vanishes, there exists $\sigma_0>0$ (depending on $\varepsilon$) such that 
\[\kappa := \inf\{g_{\sigma}(\psi),\ |\psi - \varphi(a)|=\varepsilon,\sigma\in(0,\sigma_0]\} - V(a)>0\,,\]
while $g_{\sigma}\po \varphi(a)\pf \rightarrow V(a)$ as $\sigma\rightarrow 0$, and in particular $g_{\sigma}\po \varphi(a)\pf \leqslant V(a)+\kappa/2$ for $\sigma$ small enough. This means that, then, the minimum of $g_{\sigma}$ over $\mathcal B\po \varphi(a),\varepsilon\pf$ is not attained at its boundary, which implies that $g_{\sigma}$ admits a critical point within this ball. Since we can take $\varepsilon$ arbitrarily small, we can take at any $\sigma\in(0,\sigma_0]$ such a critical point $\psi_{*,\sigma}$ in such a way that  $\psi_{*,\sigma} \rightarrow \varphi(a)$ as $\sigma\rightarrow 0$. Thanks to \eqref{eq:nag}, $\rho_{*,\sigma}:= \rho_{\psi_{*,\sigma}}$ is a stationary solution to \eqref{eq:granular_media_2}. This concludes the first step of the proof. 

Before proceeding with Step 2 of the proof, let us discuss a few points. First,
\[\mathcal W_2^2(\rho_{*,\sigma},\delta_a) = \frac{\int_{\R^d} |x-a|^2  e^{- \frac{1}{\sigma^2}\co  V(x) + |\varphi(x)-\psi_{*,\sigma}|^2 \cf }\dd x }{\int_{\R^d}  e^{- \frac{1}{\sigma^2}\co  V(x) + |\varphi(x)-\psi_{*,\sigma}|^2 \cf }\dd x}\,.\]
Using that $\psi_{*,\sigma}\rightarrow \varphi(a)$ and that $a$ is the global minimizer of $V+|\varphi-\varphi(a)|^2$, it is then not difficult to see that $\mathcal W_2^2(\rho_{*,\sigma},\delta_a)$ vanishes with $\sigma$ (see also the end of the proof for details of the argument in a more complicated case).

It remains to show that Corollary~\ref{cor:|nabla_f|<1} applies. To emphasize the dependency on the temperature, we write $f_{\sigma}$ the function \eqref{eq:f}. We have to prove that $|\na f_{\sigma}(\psi_{*,\sigma})|<1$ for $\sigma$ small enough \rev{(thanks to Remark~\ref{rem:minimizer}, this will imply that $\rho_{*,\sigma}$ is a local minimizer of $\mathcal F$)}. From \eqref{eq:u1naR2fu2} and \eqref{eq:u1naR2fu2bis}, 
\[|\na f_{\sigma}(\psi_{*,\sigma})| = \frac{2}{\sigma^2} \sup_{u\in\mathbb S^{p-1}} \mathrm{var}_{\rho_{*,\sigma}}\po u\cdot \varphi(X)\pf \,.\]
Note that by the variational characterization of the variance, for $u\in\mathbb S^{p-1}$,
 \[\mathrm{var}_{\rho_{*,\sigma}}\po u\cdot \varphi(X)\pf \leqslant    \int_{\R^d} |u\cdot \po \varphi(x)-\varphi(z)\pf |^2 \rho_{*,\sigma}(\dd x) \]
for any $z\in\R^d$. Having in mind the Laplace approximation of $\rho_{*,\sigma}$ as $\sigma$ vanishes, it is natural to take $z$ as the minimizer of $x\mapsto h_{\sigma}(x):= V(x) + |\varphi(x)-\psi_{*,\sigma}|^2$. In the rest of the proof, first, we justify that, for $\sigma$ small enough, $h_{\sigma}$ admits a unique minimizer $z_\sigma$, which converges to $a$ as $\sigma$ vanishes (this is Step 2) and then (in Step 3), we apply the Laplace approximation (which still works although $h_\sigma$ and $z_\sigma$ depends on $\sigma$) to get that,
\begin{equation}\label{eq:lim2/sigma}
\lim_{\sigma\rightarrow 0}\frac{2}{\sigma^2} \sup_{u\in\mathbb S^{p-1}}   \int_{\R^d} |u\cdot \po \varphi(x)-\varphi(z_\sigma)\pf |^2 \rho_{*,\sigma}(\dd x) < 1\,.
\end{equation}
This implies that the same holds for $\sigma$ small enough, which concludes the proof. The technical justification of the Laplace approximation is postponed to Step 4.

\medskip

\noindent\textbf{Step 2.} We write $\na \varphi=(\partial_{x_i} \varphi_j)_{(i,j)\in \cco 1,d\ccf\times\cco 1,p\ccf}$, where $i$ (resp. $j$) is the index of the line (resp. the column).  Denoting $h_0(x)=V(x)+|\varphi(x)-\varphi(a)|^2$, we compute that 
\begin{eqnarray*}
\na^2 h_\sigma(x) &=& \na^2 V(x) + 2 \na \varphi(x) \po \na \varphi(x)\pf^T + \sum_{j=1}^p  \na^2 \varphi_j(x) \po \varphi_j(x)-\psi_{j,*,\sigma}\pf \\
&= & \na^2 h_0(x) + \epsilon(x,\sigma)
\end{eqnarray*}
with 
\[\epsilon(x,\sigma)  = \sum_{j=1}^p  \na^2 \varphi_j(x) \po \varphi_j(a)-\psi_{j,*,\sigma}\pf \,, \]
so that $\epsilon(x,\sigma)$ vanishes with $\sigma$ uniformly over $x\in \mathcal B(a,r)$. Besides,
\[\na^2 h_0(a) = \na^2 V(a) + 2  \na \varphi(a) \po \na \varphi(a)\pf^T \]
is positive definite, and thus we can assume that $r$ is small enough so that $\na^2 h_0(x)$ is uniformly bounded below by a positive constant over $x\in\mathcal B(a,r)$. Thanks to the uniform convergence of $\na^2 h_{\sigma}$ to $\na^2 h_0$ over $\mathcal B(a,r)$, there is $\sigma_0',\kappa>0$ such that for all $\sigma\in(0,\sigma_0']$ and $x\in\mathcal B(a,r)$, $\na^2 h_\sigma(x) \geqslant 2\kappa$. On the other hand, we have seen when establishing \eqref{eq:g0psi} that there exists $\kappa'>0$ such that for $\sigma$ small enough, 
\begin{equation}
    \label{eq:kappa'}
    \inf\{h_{\sigma}(x),\ |x-a|\geqslant r\} - \inf\{h_{\sigma}(x),\ |x-a|\leqslant r\} \geqslant \kappa'
\end{equation}
(here we use that for any $\varepsilon>0$, $\psi_{*,\sigma} \in\mathcal B(\varphi(a),\varepsilon)$ for $\sigma$ small enough). The infimum of $h_\sigma$ is thus attained (for $\sigma$ small enough) in $\mathcal B(a,r)$, where it is strongly convex, and thus it admits a unique global minimizer $z_\sigma$. Since $r$ can be taken arbitrarily small (which changes the threshold $\sigma_0'$), we get that $z_{\sigma} \rightarrow a$ as $\sigma $ vanishes. Moreover, for $\sigma$ small enough,  for all $x\in\mathcal B(a,r)$,
\begin{equation}
    \label{eq:hkappa}
    h_\sigma(x) \geqslant h_{\sigma}(z_\sigma) + \kappa |x-z_{\sigma}|^2\,.
\end{equation}
  Since $h_\sigma$ admits a unique global maximum which is non-degenerate, the Laplace approximation heuristics suggest that expectations with respect to $\rho_{*,\sigma} \propto \exp(-\frac{1}{\sigma^2} h_\sigma)$ are equivalent to expectations with respect to the Gaussian measure $\hat \rho_\sigma\propto \exp(-\frac{1}{\sigma^2} \hat h_\sigma)$ with
\[\hat h_{\sigma}(x) = h_{\sigma}(z_\sigma) + \frac12(x-z_{\sigma})^T \na^2 h_{\sigma}(z_\sigma) (x-z_{\sigma})\,.\]
As mentioned above, we postponed this technical justification to the end of the proof and, for now, take for granted that
\begin{equation}
    \label{eq:Laplaceapprox}
    \int_{\R^d}   \po \varphi-\varphi(z_\sigma)\pf \po \varphi-\varphi(z_\sigma)\pf ^T  \rho_{*,\sigma} \underset{\sigma\rightarrow 0}\sim \int_{\R^d}   \po \varphi-\varphi(z_\sigma)\pf \po \varphi-\varphi(z_\sigma)\pf ^T  \hat \rho_{\sigma} \,.
\end{equation}

\noindent\textbf{Step 3.} From this approximation, we have now to study the limit in \eqref{eq:lim2/sigma}. First, writing
\begin{align*}
    &\frac{1}{\sigma^2}\int_{\R^d} (\varphi(x)-\varphi(z_\sigma))(\varphi(x)-\varphi(z_\sigma))^T \hat{\rho}_\sigma(\dd x)\\
    &= \mathbb{E}\left[\left(\frac{\varphi(z_\sigma + \sigma \sqrt{D_\sigma^{-1}} G)-\varphi(z_\sigma)}{\sigma}\right)\left(\frac{\varphi(z_\sigma + \sigma \sqrt{D_\sigma^{-1}} G)-\varphi(z_\sigma)}{\sigma}\right)^T\right],
\end{align*}
with $G$ a standard $d$-dimensional Gaussian variable and  $D_\sigma := \nabla^2 h_\sigma(z_\sigma)$, the almost sure convergence
\begin{equation*}
    \lim_{\sigma \to 0} \frac{\varphi(z_\sigma + \sigma \sqrt{D_\sigma^{-1}} G)-\varphi(z_\sigma)}{\sigma} = (\nabla \varphi(a))^T \sqrt{D_0^{-1}} G,
\end{equation*}
together with the bound
\begin{equation*}
    \left|\frac{\varphi(z_\sigma + \sigma \sqrt{D_\sigma^{-1}} G)-\varphi(z_\sigma)}{\sigma}\right|^2 \leq \ell^2 \left|\sqrt{D_\sigma^{-1}} G\right|^2 \underset{\sigma\rightarrow 0}{\longrightarrow}  \ell^2 \left|\sqrt{D_0^{-1}} G\right|^2 
\end{equation*}
give by dominated convergence that
\begin{align}
    &\lim_{\sigma \to 0} \mathbb{E}\left[\left(\frac{\varphi(z_\sigma + \sigma \sqrt{D_\sigma^{-1}} G)-\varphi(z_\sigma)}{\sigma}\right)\left(\frac{\varphi(z_\sigma + \sigma \sqrt{D_\sigma^{-1}} G)-\varphi(z_\sigma)}{\sigma}\right)^T\right]\nonumber \\
    &= \mathbb{E}\left[\left((\nabla \varphi(a))^T \sqrt{D_0^{-1}} G\right)\left((\nabla \varphi(a))^T \sqrt{D_0^{-1}} G\right)^T\right] \nonumber\\
    &= (\nabla \varphi(a))^T D_0^{-1} \nabla \varphi(a)\\
    &= A^T \po 2 A A^T + \na^2 V(a) \pf^{-1} A\,,\label{eq:A2AAVA}
\end{align}
where $A:=\na \varphi(a)$. 
Thanks to \eqref{eq:Laplaceapprox},
\[\frac{2}{\sigma^2} \int_{\R^d}   \po \varphi(x)-\varphi(z_\sigma)\pf \po \varphi(x)-\varphi(z_\sigma)\pf ^T  \rho_{*,\sigma}(\dd x) \underset{\sigma\rightarrow0}{\longrightarrow} 2 A^T \po 2 A A^T + \na^2 V(a) \pf^{-1} A\,. \] 

It remains to show that this limit is strictly smaller than $I_p$ (in the sense of quadratic forms), from which the left-hand side will be uniformly strictly less than $I_p$ for $\sigma$ small enough, which will conclude.

Let $u\in\mathbb S^{p-1}$ and $h=(2AA^T + \na^2 V(a))^{-1} Au$.  Then
\[h^TAu = h^T  (2AA^T + \na^2 V(a))  h \geqslant (2+c) |A^T h|^2\]
for some $c>0$, using that $\na^2 V(a)$ is definite positive. Then, 
\[h^TAu   \leqslant |u| |A^T h | \leqslant \frac{1}{\sqrt{2+c}}|u| \sqrt{h^TAu }\,,\]
and thus
\[
2 u^T A^T \po 2 A A^T + \na^2 V(a) \pf^{-1} A u
=2 u^T A^T h   \leqslant \frac{2}{2+c}\,.\]
As discussed above, this concludes the proof.

Notice that $c$ is independent from $\sigma$, and thus, in Proposition~\ref{prop:New_fixed_point}, 
\eqref{eq:contract_fixedpoint} holds on $\mathcal A'=\mathcal B\po \varphi(a),r\pf$ with $r>0$ and $\alpha\in[0,1)$ independent from $\sigma\in(0,\sigma_0'']$ for some $\sigma_0''>0$. If, moreover, $x\mapsto V(x) + |\varphi(x)-\psi|^2$ is $k$-strongly convex for some $k>0$ for all $\psi \in\mathcal A'$, the Bakry-Emery criterion shows that $\Gamma(\rho)$ satisfies an LSI with constant $\eta = k \sigma^2$ for all $\rho$ such that $\varphi(\rho) \in \mathcal A'$, and thus for all $\rho \in \mathcal A:= \mathcal B_{\mathcal W2}(\delta,r_0)$ provided $r_0$ is small enough (independently from $\sigma$). Following the proof of Proposition~\ref{prop:New_fixed_point} (which in fact does not require \eqref{eq:LSIlineaire_locale} to hold  globally on $\mathcal P(\R^d)$, but only on $\mathcal A$), we obtain for $\rho\in \mathcal A$ the local non-linear LSI
\[\mathcal F(\rho) - \mathcal F(\rho_{*,\sigma}) \leqslant k \sigma^4 \po 1   +  \frac{4 k  \theta \ell^2}{(1-\alpha)^2}\pf\mathcal I\po \rho|\Gamma(\rho)\pf\,,\]
where $\theta,\ell,\alpha,k$ are independent from $\sigma\in(0,\sigma_0'']$, i.e. \eqref{eq:loc_LSInonlineaire} with $\oeta$ independent from $\sigma$. Similarly, using that $\eta=k\sigma^2$, we get that $q_1$ defined in \eqref{eq:qt} with $t=1$ is bounded independently from $\sigma$ small enough, for initial conditions $\rho_0 \in \mathcal A$. Applying Theorem~\ref{thm:main} shows that \eqref{eq:WHFlocalization} holds with $C,\oeta$ independent from $\sigma\in(0,\sigma_0'']$ in this case (again, the proof only uses an LSI for $\Gamma(\rho)$ uniformly over $\mathcal A$, not over $\mathcal P(\R^d)$).

\medskip

\noindent\textbf{Step 4.} We now turn to the justification of \eqref{eq:Laplaceapprox}. Write
$\chi_0(x)=1$ and, omitting the dependency in $\sigma$, $\chi_1(x)= \po \varphi(x)-\varphi(z_\sigma)\pf \po \varphi(x)-\varphi(z_\sigma)\pf ^T $ (notice that $|\chi_1(x)|\leqslant C|x|^2+C$ for some $C$ independent from $\sigma\in(0,\sigma_0]$). For $i\in\{0,1\}$ and $k\in\{1,2,3,4\}$ we write
\[I_{i,k} = \int_{A_k} \chi_i   \exp\po -\frac{1}{\sigma^2}\co h_\sigma -  h_\sigma(z_\sigma)\cf \pf \,,\qquad \hat I_{i,k} = \int_{A_k} \chi_i  \exp\po -\frac{1}{\sigma^2}\co \hat h_\sigma -  h_\sigma(z_\sigma)\cf \pf  \,,\]
with
\[
\begin{array}{ll}
    A_1 = \{|x-z_{\sigma}|\leqslant \sqrt{\sigma}\}\qquad & A_2 = \{\sqrt{\sigma} < |x-z_{\sigma}|\leqslant 2r \}\\
A_3 = \{2r < |x-z_{\sigma}|\leqslant M\}\qquad 
&A_4 = \{M < |x-z_{\sigma}| \}\,,
\end{array}\]
where $r$ is small enough for  \eqref{eq:kappa'} to hold for some $\kappa'$  and for  \eqref{eq:hkappa} to hold for all $x\in\mathcal B(a,3r)$ for some $\kappa$ (with $r,\kappa,\kappa'>0$ independent from $\sigma$) for all $\sigma$ small enough, and $M>1$ is large enough so that 
\begin{equation}
    \label{eq:M2}
    \forall x\notin \mathcal B(a,M-1)\,,\qquad \frac12 V(x) \geqslant V(a)+2
\end{equation}
For $\sigma$ small enough, $|z_\sigma-a|<r$, and thus $\mathcal B(a,r) \subset \mathcal B(z_\sigma,2r)  $, which together with 
\eqref{eq:kappa'} gives that 
\[    \inf\{h_{\sigma}(x),\ |x-z_\sigma |\geqslant 2 r\}  \geqslant  h_\sigma(z_{\sigma}) + \kappa'\,. \] 
As a consequence, for $i\in\{0,1\}$, using that $\varphi$ is bounded uniformly over $\mathcal B(a,M+1)$ (hence over $\mathcal B(z_{\sigma},M)$ for $\sigma$ small enough)
\[I_{i,3}  = \underset{\sigma\rightarrow 0}{\mathcal O}\po e^{-\frac{\kappa'}{\sigma^2}} \pf\,.  \]
Similarly, for $\sigma$ small enough, $\mathcal B(z_\sigma,2r)\subset \mathcal B(a,3r)$ and thus, thanks to \eqref{eq:hkappa}  for $x\in\mathcal B(a,2r)$, using that $\varphi$ is bounded uniformly over $\mathcal B(a,3r)$, for $i\in\{0,1\}$,
\[I_{i,2}   = \underset{\sigma\rightarrow 0}{\mathcal O}\po e^{-\frac{\kappa}{\sigma}} \pf\,.  \]
Next, using that $\mathcal B(a,M-1)\subset \mathcal B(z_\sigma,M)$ and $V(a)+1 \geqslant h_\sigma(z_\sigma) $ for $\sigma$ small enough, \eqref{eq:M2} implies that
\[    \forall x\notin \mathcal B(z_\sigma,M)\,,\qquad   h_\sigma(x) \geqslant V(x) \geqslant \frac12 V(x)+1+ h_\sigma(z_\sigma)\,.\] 
This gives, for $\sigma\leqslant 1/(2\beta)$,
\[I_{i,4} \leqslant e^{-\frac{1}{\sigma^2}} \int_{|x-z_\sigma|> M}  |\chi_i(x)| e^{-\frac{V(x)}{2\sigma^2} }\dd x  \leqslant e^{-\frac{1}{\sigma^2}} \int_{\R^d}  |\chi_i(x)| e^{-\beta V(x) }\dd x  = \underset{\sigma\rightarrow 0}{\mathcal O}\po e^{-\frac{1}{\sigma^2}} \pf\,, \]
since $e^{-\beta V}$ admits a second moment (here for simplicity we have assumed without loss of generality that $V\geqslant 0$).

Finally, using that $\na^{(3)} h_\sigma$  converges $\na^{(3)} h_0$ uniformly over compact sets, we get that there exists $C>0$ such that
\[|h_\sigma(x) - \hat h_\sigma(x)| \leqslant C |x-z_\sigma|^3\]
for all $x\in\mathcal B(a,1)$ (hence all $x\in\mathcal B(z_\sigma,\sqrt{\sigma})$) for $\sigma$ small enough. From this, for $i\in\{0,1\}$,
\[I_{i,1} = \hat I_{i,1} \po 1+ \underset{\sigma\rightarrow 0}{\mathcal O}\po\sigma  \pf\pf\,.   \] 
On the other hand, denoting $c_\sigma = (2\pi)^{d/2}\sqrt{\mathrm{det}[\po \na^2 h_\sigma(z_\sigma)\pf^{-1}]}$ (which converges to some $c_0>0$ as $\sigma\rightarrow 0$),
 by usual computations for Gaussian distributions,
 \[
 I_0 := \sum_{k=1}^4 I_{0,k}  =    \hat I_{0,1} \po 1+ \underset{\sigma\rightarrow 0}{\mathcal O}\po\sigma  \pf\pf +  \underset{\sigma\rightarrow 0}{\mathcal O}\po e^{-\frac{\kappa}{\sigma}} \pf = \sigma^d c_\sigma +  \underset{\sigma\rightarrow 0}{\mathcal O}\po \sigma^{d+1} \pf  \,,\]
and then
\begin{eqnarray*}
\frac{\sum_{k=1}^4 I_{1,k}}{I_0} & =& \po \sigma^d c_\sigma +  \underset{\sigma\rightarrow 0}{\mathcal O}\po \sigma^{d+1} \pf \pf^{-1} \po  \hat I_{1,1} \po 1+ \underset{\sigma\rightarrow 0}{\mathcal O}\po\sigma  \pf\pf +  \underset{\sigma\rightarrow 0}{\mathcal O}\po e^{-\frac{\kappa}{\sigma}} \pf\pf  \\
 & = &  \frac{\hat I_{1,1}}{\sigma^d c_\sigma } \po 1+ \underset{\sigma\rightarrow 0}{\mathcal O}\po\sigma  \pf\pf +  \underset{\sigma\rightarrow 0}{\mathcal O}\po e^{-\frac{\kappa}{2\sigma}} \pf \\
 & = &  \frac{\sum_{k=1}^4 \hat I_{1,k}}{\sigma^d c_\sigma } \po 1+ \underset{\sigma\rightarrow 0}{\mathcal O}\po\sigma  \pf\pf +  \underset{\sigma\rightarrow 0}{\mathcal O}\po e^{-\frac{\kappa}{2\sigma}} \pf \\
 & = &  \sigma^2  \po\na \varphi(a)\pf^T  \po \na^2 h_{0}(a)\pf^{-1}  \na \varphi(a)  + \underset{\sigma\rightarrow 0}o\po \sigma^2\pf\,,
\end{eqnarray*}
as we computed in \eqref{eq:A2AAVA}. This concludes the proof of \eqref{eq:Laplaceapprox}, hence of Proposition~\ref{prop:multiwell_new}. 
\end{proof}

\begin{rem}\label{rem:theta>-V''}
In the case \eqref{eq:granular_media} with $W(x,y)=\theta|x-y|^2$, taking $\theta > - \inf \na^2 V/2$ (as in \cite[Theorem 2.3]{tugaut2023steady}), we get that  \eqref{eq:WHFlocalization} holds with $\oeta,C$ that are uniform over $\sigma$ small enough in a neighborhood of $\delta_a$. At first this seems to be a desirable property for optimization (and in fact this is the idea underlying consensus-based optimization). This should be mitigated by the following observations.

First, this condition implies that $x\mapsto b_a(x)= V(x) + \theta|x-a|^2$ is strongly convex for all $a$ and thus, any local minimizer  $a$ of $V$ being a critical point of $b_a$, it is then the unique global minimizer of $b_a$. Hence, the Wasserstein gradient descent can be trapped in any local well of $V$ (and thus it is not very different from a basic gradient descent for $V$ in $\R^d$). By contrast, if $\theta$ is taken smaller,  the condition that $a$ has to be a global minimizer of $b_a$ for a localization to occur  can be used as a way to  discard shallow local wells and select better solutions.

Besides, the decay  \eqref{eq:WHFlocalization} holds for solutions initialized in $\mathcal B_{\mathcal W_2}(\delta_a,r_0)$ for some small $r_0$. Now, assume for instance that we start close to $\delta_{a'}$ where $a'$ is a local minimizer of $V$ but not a global minimizer of $b_{a'}$, and $a\neq a'$ is the global minimizer of $V$ and also the global minimizer of $b_{x}$ for all $x\in\R^d$. For $\sigma$ small enough, we expect all solutions to converge to a stationary solution close to $\delta_a$. In particular, eventually, any trajectory will lie in  $\mathcal B_{\mathcal W_2}(\delta_a,r_0)$ and thus the convergence rate $\oeta$ in \eqref{eq:WHFlocalization} is independent from $\sigma$, as in Remark~\ref{rem:Ainfini}. However the constant $C$ highly depends on the initial condition, reflecting the time needed to reach  $\mathcal B_{\mathcal W_2}(\delta_a,r_0)$. Starting close to $\delta_{a'}$, the additional convexity due to the interaction will increase the energy barrier the process has to overcome to move from $a'$ to $a$. More quantitatively, the Arrhenius law indicates that the typical time for the solution to put most of its mass around $a$ will be of order $e^{D/\sigma^2}$ where $D$ is larger than in the case $\theta=0$. In other words, an attractive interaction worsen the metastability of the process. If the goal is not to improve the local convergence rate but to enhance the exploration of the space by lowering energy barriers (as in the Adaptive Potential algorithms and related methods~\cite{ABP}), repulsive interaction (i.e. $\theta<0$) seems more indicated, as studied in \cite{CdRandco}.
\end{rem}

\section{Vlasov-Fokker-Planck equation}\label{sec:VFP}

In this section, we consider the kinetic Vlasov-Fokker-Planck equation, which reads 
\begin{equation}
\label{eq:VFP}
\partial_t \rho_t + v\cdot \na_x \rho_t  = \na_v\cdot \po \sigma^2\nabla_v \rho_t + \po v+ \na E_{\rho_t^x}\pf \rho_t\pf\,,
\end{equation}
where $\rho_t(x,v)$ is the probability density of particles at position $x\in\R^d$ with velocity $v\in\R^d$, $\rho^x(x)=\int_{\R^d} \rho(x,v)\dd v$ is the marginal density of the position, $E_{\rho^x}$ is the linear functional derivative of an energy $\mathcal E$ as in Section~\ref{sec:settings} (it only depends on the position $x$ and the marginal density $\rho^x$). 

Entropic long-time convergence \rev{rates} for \eqref{eq:VFP} have been established in some cases (with the energy corresponding to the granular media case \eqref{eq:def_F}) in \cite{MONMARCHE20171721,guillin2021uniform} (using uniform LSI for the associated particle system), \cite{Songbo2} (with global non-linear LSI) or \cite{ren2021exponential} (with uniform conditional LSI for the associated particle system and a smallness assumption on the interaction). All these works establish global convergence toward a unique stationary solution and thus do not cover cases with several stationary solutions. \rev{In this latter situation, qualitative convergence to the set of stationary solutions is studied in \cite{DuongTugaut}.}

The Vlasov-Fokker-Planck is not the gradient flow of some free energy functional. However, we can still get a local convergence under the same assumptions as in the elliptic case \eqref{eq:granular_media}. This is the main result of this section, stated in Theorem~\ref{thm:mainVFP} below.

The free energy associated to \eqref{eq:VFP} is
\[\mathcal F_{k}(\rho ) = \sigma^2 \mathcal H(\rho) + \mathcal E(\rho^x) + \mathcal E_{k}(\rho)\]
with the kinetic energy 
\[\mathcal E_{k}(\rho) = \int_{\R^{2d}} \frac{|v|^2}{2}\rho (x,v)\dd x\dd v\,.\]
In this section, Assumptions~\ref{assu:F}, \ref{assu:lfdE} and \ref{assu:loc-eq}  remain in force (which implies that $\mathcal F_k$ is also lower bounded, see Remark~\ref{rem:elliptic_to_kinetic} below). The corresponding local equilibria are given by
\begin{equation}
\label{eq:GammaVFP}
\Gamma_k(\rho) = \Gamma(\rho^x) \otimes \mathcal N(0, \sigma^2 I_d)
\end{equation}
where $\Gamma(\rho^x)$ is the corresponding overdamped local equilibrium \eqref{eq:Gamma} and $\mathcal N(0,\sigma^2 I_d)$ stands for the centered  $d$-dimensional Gaussian distribution with variance $\sigma^2I_d$. In other words,
\[\Gamma_k(\rho) \propto \exp \po - \frac{1}{\sigma^2}\po  E_{\rho^x}(x) + \frac{1}{2}|v|^2\pf \pf \dd x \dd v \,. \]
 Along the flow \eqref{eq:VFP}, under suitable regularity conditions, 
\begin{equation}
\label{eq:dissipationVFP}
\frac{\dd}{\dd t}  \mathcal F_{k}(\rho_t) = - \sigma^4 \int_{\R^{2d}} \left| \na_v \ln \frac{\rho_t}{\Gamma_k(\rho_t)} \right|^2 \dd \rho_t  \,.
\end{equation}
Hence, $t\mapsto \mathcal F_k(\rho_t)$ is non-increasing, but the free energy dissipation can vanish even if $\rho\neq \Gamma_k(\rho)$. To avoid ambiguity we write $\mathcal K_k = \{\rho_*\in\mathcal P_2(\R^{2d}),\ \mathcal F_k(\rho_*)<\infty,\  \Gamma_k(\rho_*)=\rho_*\}$. There is a one-to-one correspondence between $\mathcal K_k$ (the set of critical points of $\mathcal F_k$ in $\mathcal P_2(\R^{2d})$) and $\mathcal K$ (the set of critical points of $\mathcal F=\sigma^2 \mathcal H + \mathcal E$ in $\mathcal P_2(\R^{d})$), since $\rho_*\in\mathcal K_k$ if and only if $\rho_*^x \in\mathcal K$ and $\rho_*=\rho_*^x \otimes \mathcal{N}(0,\sigma^2 I_d)$. 

As in the elliptic case, we do not address well-posedness issues and work under the following conditions.

 \begin{assu}\label{assu:general_kinetic}
 For all $\rho_0\in\mathcal P_2(\R^{2d})$, \eqref{eq:VFP} has a unique strong solution, continuous in time for $\mathcal W_2$, such that, for all $t>0$, $\rho_t$ has a continuous positive density, $\mathcal F_k(\rho_t)$, $\mathcal H(\rho_t|\Gamma_k(\rho_t))$ and  $\mathcal I(\rho_t|\Gamma_k(\rho_t))$ are finite and \eqref{eq:dissipationVFP} holds, with $t\mapsto \mathcal I(\rho_t|\Gamma(\rho_t))$ continuous over $\R_+^*$. 
 \end{assu}
 
 To recover an entropy dissipation with a  full Fisher information instead of~\eqref{eq:dissipationVFP}, we work with a modified free energy of the form 
 \begin{equation}
 \label{eq:defLa}
\mathcal L(\rho_t) = \mathcal F_{k}(\rho_t)  +  a \sigma^4  \int_{\R^{2d}} \left|(\na_x+\na_v) \ln \frac{\rho_t}{\Gamma_k(\rho_t)} \right|^2 \dd\rho_t\,,
 \end{equation}
 for some $a>0$ to be fixed.
 
 \begin{lem}\label{lem:dissipationVFP}
  Assume~\eqref{eq:Lipshitz} and that 
\begin{equation}
\label{eq:M}
M := \sup_{\rho\in\mathcal P_2(\R^d)} \|\na^2 E_{\rho}\|_\infty <\infty\,.\tag{$x-$\textbf{Lip}}
\end{equation}
Then, taking $a=\frac12 \po 1+ 2M^2 + 2\frac{L^2}{\sigma^4} \pf ^{-1}$ in the definition of $\mathcal L$, for all $t> 0$,
\[\frac{\dd}{\dd t} \mathcal L(\rho_t) \leqslant - \frac{\sigma^4 a}{2} \mathcal I\po \rho_t|\Gamma_k(\rho_t)\pf\,.\]
 \end{lem}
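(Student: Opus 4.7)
The plan is a Villani-type hypocoercivity argument: set $h_t := \ln(\rho_t/\Gamma_k(\rho_t))$, so that \eqref{eq:dissipationVFP} already gives $\partial_t\mathcal F_k(\rho_t) = -\sigma^4\int|\na_v h_t|^2 d\rho_t$, and the task reduces to estimating the time derivative of the hypocoercive correction
\begin{equation*}
I_+(\rho_t) := \int_{\R^{2d}}|(\na_x+\na_v)h_t|^2 d\rho_t,
\end{equation*}
so that the sum $\partial_t\mathcal L = \partial_t\mathcal F_k + a\sigma^4\partial_t I_+$ absorbs the full Fisher information $\mathcal I(\rho_t|\Gamma_k(\rho_t)) = \int(|\na_x h_t|^2+|\na_v h_t|^2)d\rho_t$ on the right-hand side with a coefficient of at least $\sigma^4/4$, the missing piece being the $x$-Fisher information.

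Concretely I would expand $|(\na_x+\na_v)h|^2 = |\na_x h|^2+|\na_v h|^2+2\,\na_x h\cdot\na_v h$ and differentiate each of the three resulting integrals along the VFP equation, using $\na_v h_t = \na_v\ln\rho_t + v/\sigma^2$ and $\na_x h_t = \na_x\ln\rho_t + \na E_{\rho_t^x}(x)/\sigma^2$. Three structural phenomena then dominate the calculation: (a) the friction $-v$ in the VFP drift yields a non-positive contribution in $\int|\na_v h_t|^2 d\rho_t$; (b) the Ornstein-Uhlenbeck part in $v$ yields, via a Bochner identity, non-negative velocity-Hessian terms that can be discarded; and, crucially, (c) the commutator $[\na_v,v\cdot\na_x] = \na_x$ applied to the cross-term $\int\na_x h_t\cdot\na_v h_t\,d\rho_t$ produces the missing coercive contribution $-\int|\na_x h_t|^2 d\rho_t$, exactly as in Villani's $H^1$ hypocoercivity framework for the linear kinetic Fokker-Planck equation.

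The main obstacle is controlling the error terms, which are of two different natures. The \emph{deterministic} errors come from commuting $\na_x$-derivatives through $\ln\Gamma_k(\rho_t) = -E_{\rho_t^x}(x)/\sigma^2 - |v|^2/(2\sigma^2) + \mathrm{const}$, producing terms containing the Hessian $\na^2 E_{\rho_t^x}$; these are bounded pointwise by $M$ thanks to \eqref{eq:M} and absorbed into the coercive quantities of (a) and (c) by Young's inequality, at the cost of $M^2$-factors in the positive coefficient in front of $\int|\na_v h_t|^2 d\rho_t$. The \emph{nonlinear} errors come from the fact that $\Gamma_k(\rho_t)$ itself evolves with $\rho_t^x$: using the velocity-integrated continuity equation $\partial_t\rho_t^x = -\na_x\cdot\int_{\R^d} v\rho_t dv$, the contribution $\partial_t\ln\Gamma_k(\rho_t)$ to $\partial_t h_t$ can be rewritten as a function of $x$ alone, so that its gradient combines additively with the existing $\na_x$-terms and is controlled by the same mechanism (together, again, with the uniform Hessian bound \eqref{eq:M}).

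Collecting all the terms yields an inequality of the form $\partial_t I_+(\rho_t)\leq C_v(M)\int|\na_v h_t|^2 d\rho_t - c_x\int|\na_x h_t|^2 d\rho_t$ with explicit constants depending only on $M$; the specific choice $a=(3+4M^2)^{-1}$ is precisely what makes the positive $\int|\na_v h_t|^2 d\rho_t$-contribution in $a\sigma^4\partial_t I_+$ be dominated by the negative $-\sigma^4\int|\na_v h_t|^2 d\rho_t$ coming from \eqref{eq:dissipationVFP} up to the advertised margin, while simultaneously giving at least $\sigma^4/4$ in front of $\int|\na_x h_t|^2 d\rho_t$. The hard part is the bookkeeping: tracking the exact constants so that the single factor $a = (3+4M^2)^{-1}$ realizes both balances simultaneously, which is what forces the specific form of the correction $(\na_x+\na_v)$ (as opposed to, say, $\na_x$ alone, with a separate cross-term coefficient).
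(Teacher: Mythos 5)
Your proposal is correct and follows essentially the same Villani-type $H^1$-hypocoercivity route as the paper: the $(\nabla_x+\nabla_v)$-twisted Fisher information, the commutator producing the coercive $\nabla_x$-contribution, and the uniform Hessian bound \eqref{eq:M} absorbing both the potential-Hessian error and the nonlinear error coming from the time-dependence of $\Gamma_k(\rho_t)$. The paper does not carry out the bookkeeping you sketch but instead invokes the ready-made matrix inequality of \cite[Proposition 3]{NonSymVFP} and specializes it to the matrix with all blocks equal to $I_d/\sqrt{2}$ (i.e.\ your $\nabla_x+\nabla_v$ twist) with the Young parameter $r=1/(2M)$, so the two arguments coincide at the level you present.
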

 \begin{proof}
This is a computation from the proof of \cite[Theorem 2.1]{Songbo2}, that we recall now  (we follow a presentation similar to the proof of \cite[Proposition 3]{NonSymVFP}). For conciseness, write $\hat \rho_t = \Gamma_k(\rho_t) $, $h_t = \rho_t/\hat \rho_t$ and let $A$ be  a $2d\times2d$ matrix of the form  
 \[A= \begin{pmatrix}
a_{11} I_d & a_{12} I_d \\ a_{21}I_d & a_{22} I_d
\end{pmatrix}\,.\]
For $t_1, t_2 > 0$, set
\[
	u(t_1,t_2) = \int_{\R^{2d}} \left|A \na \ln \frac{\rho_{t_1}}{\hat{\rho}_{t_2}} \right|^2 \dd\rho_{t_1},
\]
with $\nabla = \begin{pmatrix} \nabla_x\\ \nabla_v\end{pmatrix}$. For $t>0$, we decompose
\[
	\frac{\dd}{\dd t} \int_{\R^{2d}} \left|A \na \ln h_t \right|^2 \dd\rho_t = \partial_{t_1} u(t,t) + \partial_{t_2} u(t,t) = (\star)+(\star\star)\,.
\]
\rev{\emph{Step 1 : bounding ($\star$).}} Since $\hat \rho_t$ is the invariant measure of the linear kinetic Fokker-Planck equation with fixed energy equal to $E_{\rho_t^x}$ with generator 
\[L_{\rho_t}   =  v\cdot \na_x- (\na E_{\rho_t^x}+ v)\cdot\na_v +  \sigma^2 \Delta_v \,,\]
 the first part is treated with classical computations (e.g. \cite[Lemma 8]{Gamma}, and below) leading to 
\begin{equation}
\label{eq:hypocoercivite}
(\star) \leqslant  2 \int_{\R^{2d}}  \na \ln h_t \cdot A^T A J   \na  \ln h_t  \dd\rho_t \,,
\end{equation}
where
\[J(x) = \begin{pmatrix}
0 &   \na^2 E_{\rho_t^x}(x) \\- I_d & -I_d 
\end{pmatrix}  =: J_0 + \begin{pmatrix}
0 &   \na^2 E_{\rho_t^x}(x) \\0 & 0 
\end{pmatrix} \]
is the Jacobian matrix of the drift of 
\[L_{\rho_t}^* =- v\cdot \na_x + (\na E_{\rho_t^x}- v)\cdot\na_v + \sigma^2  \Delta_v\,,\]
the dual of $L_{\rho_t}$ in $L^2 (\hat \rho_t)$. For the reader's convenience, let us give a self-contained proof of \eqref{eq:hypocoercivite} (keeping computations formal and referring to \cite{Songbo2} for technical justifications). We start by some preliminary considerations. First, notice that, for a smooth function $\varphi$ with compact support,
\[\int_{\R^{2d}} \varphi \partial_t \rho_t = \int_{\R^{2d}} L_{\rho_t} \varphi \rho_t = \int_{\R^{2d}}  \varphi  L_{\rho_t}^* \po \frac{\rho_t}{\hat \rho_t}\pf \hat\rho_t\,.   \]
This being true for an arbitrary $\varphi$, this amounts to
\begin{equation}
\label{eq:hypoco1}
\frac{\partial_t \rho_t}{\hat \rho_t} = L_{\rho_t}^*h_t\,,\qquad \text{hence}\qquad \frac{\partial_t \rho_t}{ \rho_t} = \frac{L_{\rho_t}^*h_t}{h_t}\,. 
\end{equation}
Moreover, denoting for smooth functions $f,g$
\[\Gamma(f,g) = \sigma^2 \na_v f\cdot \na_v g\,,\]
which for all $t\geqslant 0$ is the carr\'e du champs associated to $L_{\rho_t}$ since
\[\Gamma(f,g) = \frac12 \co L_{\rho_t}(fg) - f L_{\rho_t}(g)- gL_{\rho_t}(f)\cf\]
(and similarly if $L_{\rho_t}$ is replaced by $L_{\rho_t}^*$), we see that, for a positive smooth $f$,
\begin{equation}
\label{eq:hypoco-log}
L_{\rho_t}^* \po \ln f\pf = \frac{L_{\rho_t}^* f }{f} - \frac{\Gamma(f,f)}{f^2} = \frac{L_{\rho_t}^* f }{f} -\Gamma(\ln f)\,.
\end{equation}  
Next, using that $\hat{\rho}_t$ is invariant by $L_{\rho_t}$, it holds $\int_{\R^{2d}} L_{\rho_t} \varphi \hat\rho_t =  0$ for all nice $\varphi$. In particular, for any $g$,
\begin{equation}\label{eq:intLrho}
    \begin{aligned}
        \int_{\R^{2d}} L_{\rho_t}(g) \rho_t & =  \int_{\R^{2d}} L_{\rho_t}(g) h_t \hat \rho_t  \\
        & = - \int_{\R^{2d}} \co g  L_{\rho_t}( h_t ) + 2\Gamma(g,h_t)\cf  \hat\rho_t  \ = \ - \int_{\R^{2d}} \co h_t  L_{\rho_t}^*( g) + 2\Gamma(g,h_t)\cf  \hat\rho_t.
    \end{aligned}
\end{equation}
Now, since for $t_1, t_2 > 0$,
\begin{align*}
	\partial_{t_1} u(t_1,t_2) &= \int_{\R^{2d}} \left(\frac{\dd}{\dd t_1} \left|A \nabla \ln \frac{\rho_{t_1}}{\hat{\rho}_{t_2}}\right|^2\right) \rho_{t_1} + \int_{\R^{2d}} \left|A \nabla \ln \frac{\rho_{t_1}}{\hat{\rho}_{t_2}}\right|^2 \partial_t \rho_{t_1}\\
	&= \int_{\R^{2d}} \left(2 A \nabla \ln \frac{\rho_{t_1}}{\hat{\rho}_{t_2}} \cdot A \nabla \frac{\partial_t \rho_{t_1}}{\hat{\rho}_{t_2}}\right) \rho_{t_1} + \int_{\R^{2d}} L_{\rho_{t_1}}\left(\left|A \nabla \ln \frac{\rho_{t_1}}{\hat{\rho}_{t_2}}\right|^2\right) \rho_{t_1},
\end{align*}
we get, for $t>0$,
\[
	(\star) = \int_{\R^{2d}} \left(2 A \nabla \ln h_t \cdot A \nabla \frac{\partial_t \rho_t}{\hat{\rho}_t}\right) \rho_t + \int_{\R^{2d}} L_{\rho_t}\left(\left|A \nabla \ln h_t\right|^2\right) \rho_t.
\]
Therefore, writing $\Phi=| A \na \ln h_t |^2$, using \eqref{eq:hypoco1} and the equality~\eqref{eq:intLrho} with $g=\Phi$, and then~\eqref{eq:hypoco-log}, 
\begin{align}
    (\star) 
& = \int_{\R^{2d}} 2 A \na \ln h_t \cdot A\na \po \frac{L_{\rho_t}^*h_t}{h_t} \pf   \rho_t - \int_{\R^{2d}} \co h_t  L_{\rho_t}^*( \Phi) + 2\Gamma(\Phi,h_t)\cf  \hat\rho_t \nonumber  \\
& = \int_{\R^{2d}} 2 A \na \ln h_t \cdot A\na \po L_{\rho_t}^*(\ln h_t) + \Gamma(\ln h_t) \pf   \rho_t - \int_{\R^{2d}} \co h_t  L_{\rho_t}^*( \Phi) + 2\Gamma(\Phi,h_t)\cf  \hat\rho_t. \label{eq:hypoco2}
\end{align}
Focusing on the first term, and noticing that
\[\na L_{\rho_t}^* f = L_{\rho_t}^* \na f + J \na f\,,\]
we see that, denoting by $(B)$ the right-hand side of \eqref{eq:hypocoercivite}
\begin{eqnarray*}
\int_{\R^{2d}} 2 A \na \ln h_t \cdot A\na  L_{\rho_t}^*(\ln h_t)  \rho_t & = & (B) +  \int_{\R^{2d}} 2 A \na \ln h_t \cdot L_{\rho_t}^* \po A \na  \ln h_t\pf    \rho_t  \\
& = &  (B) +  \int_{\R^{2d}} \po L_{\rho_t}^* (\Phi) - 2\Gamma(A\na \ln h_t) \pf    \rho_t  \,.
\end{eqnarray*}
(In the first line, $L_{\rho_t}^* \po A \na  \ln h_t\pf$ is understood as applying coordinate-wise $L_{\rho_t}^*$ to $ A \na  \ln h_t$, and similarly $\Gamma$ in the second line is the sum over the coordinates of $\Gamma$ applied coordinate-wise). Notice that the first term of this integral cancels out with the first term of the second integral of \eqref{eq:hypoco2}. Moreover, by chain rule, for $i\in\cco 1,d\ccf$,
\[
A \na \ln h_t \cdot A\na \po |\partial_{v_i}\ln h_t|^2 \pf  = 2 \partial_{v_i}\ln h_t  A \na \ln h_t \cdot A\na \partial_{v_i}\ln h_t  =   \partial_{v_i}\ln h_t \partial_{v_i}(\Phi)\,.
\]
Summing over $i$ and multiplying by $\sigma^2$ reads 
\[
A \na \ln h_t \cdot A\na \po \Gamma\po \ln h_t\pf \pf    =   \Gamma\po \ln h_t,\Phi\pf = \frac{\Gamma\po  h_t,\Phi\pf}{h_t} \,.
\]
This means that the second terms of the two integrals in~\eqref{eq:hypoco2} cancel out.  We have obtained that
\[(\star) = (B) -  2\int_{\R^{2d}} \Gamma(A\na \ln h_t)    \rho_t\,.  \]
The integral being non-positive, this concludes the proof of~\eqref{eq:hypocoercivite}.

Writing $a_1= \sqrt{a_{11}^2+a_{21}^2}$, we bound
\begin{eqnarray*}
(\star)  & \leqslant &   2\int_{\R^{2d}} \co   \na \ln h_t \cdot A^T A J_0   \na  \ln h_t   + a_1M\left| A  \na \ln h_t \right| \left|  \na_v \ln h_t\right|\cf \dd\rho_t \\
 & \leqslant &  \int_{\R^{2d}} \co   \na \ln h_t \cdot A^T A\po 2 J_0 + r I_{2d}\pf   \na  \ln h_t   + \frac{a_1^2M^2}{r}  \left|  \na_v \ln h_t\right|^2\cf \dd\rho_t \,,
\end{eqnarray*}
for any $r>0$.

\rev{\emph{Step 2 : bounding ($\star\star$).}} We now deal with $(\star\star)$, that we bound first as
\begin{eqnarray*}
(\star\star) & \leqslant & 2 \int_{\R^{2d}} \left|A \na \ln h_t \right|\left|\frac{\dd}{\dd t} A\na \ln  \hat\rho_t\right| \dd\rho_t \\
& = & 2 a_1\int_{\R^{2d}} \left|A \na \ln h_t \right|\left|\frac{\dd}{\dd t} \na_x \ln  \hat\rho_t\right| \dd\rho_t \\
 & \leqslant & r' \int_{\R^{2d}} \left|A \na \ln h_t \right|^2 \dd\rho_t + \frac{a_{1}^2}{r'}  \int_{\R^{2d}} \left|\frac{\dd}{\dd t} \na_x \ln  \hat\rho_t\right|^2 \dd\rho_t \,,
\end{eqnarray*}
for any $r'>0$. By the explicit expression of $\hat\rho_t$ and \eqref{eq:Lipshitz}, for $t,s\geqslant 0$,
\[
\left|\na_x \ln  \hat\rho_t - \na_x \ln  \hat\rho_{t+s}\right| = \frac{1}{\sigma^2} \left| \na_x E_{\rho_t^x} - \na_x E_{\rho_{t+s}^x} \right|   
 \leqslant  \frac{L}{\sigma^2} \mathcal W_2\po \rho_t^x,\rho_{t+s}^x\pf\,.
\]
Using the continuity equation on $\rho_t^x$ obtained by integrating \eqref{eq:VFP} over the variable $v$ and using the Benamou-Brenier formulation of the Wasserstein distance~\cite{BenamouBrenier},  as detailed in the proof of \cite[Theorem 2.1]{Songbo2}, we get that 
\[\limsup_{s\rightarrow 0}  s^{-1} \mathcal W_2^2\po \rho_t^x,\rho_{t+s} ^x\pf \leqslant \int_{\R^{2d}} \left| \na_v \ln h_t \right|^2 \dd\rho_t\,,  \]
from which 
\[\left|\frac{\dd}{\dd t} \na_x \ln  \hat\rho_t\right|^2 \leqslant  \frac{L^2}{\sigma^4} \int_{\R^{2d}} \left| \na_v \ln h_t \right|^2 \dd\rho_t \,.\]

\rev{\emph{Step 3 : Conclusion.}} Gathering all previous bounds on $(\star)$ and $(\star\star)$ yields
\[\partial_t \int_{\R^{2d}} \left|A \na \ln h_t \right|^2 \dd\rho_t \leqslant  \int_{\R^{2d}}\na \ln h_t  \cdot B \na \ln h_t \dd\rho_t    \]
with
\[ B = A^T A \begin{pmatrix}
(r+r')I_d & 0 \\ -2 I_d & (r+r'-2) I_d
\end{pmatrix} + a_1^2 \po \frac{M^2}{r} + \frac{ L^2}{r'\sigma^4} \pf \begin{pmatrix}
0 & 0 \\ 0 & I_d
\end{pmatrix}\,.\]
Taking  $a_{ij}=1/\sqrt{2}$ for all $i,j\in\{1,2\}$ we get that 
\begin{eqnarray*}
 B & =  &   \begin{pmatrix}
(r+r'-2) I_d & (r+r'-2) I_d \\ (r+r'-2) I_d  & \po r+r'-2 + \frac{M^2}{r}+ \frac{L^2}{r'\sigma^4}\pf  I_d
\end{pmatrix}\\
& \preceq & \begin{pmatrix}
- \frac12 I_d & 0 \\ 0  & \po 1+ 2M^2 + 2\frac{L^2}{\sigma^4} \pf  I_d
\end{pmatrix}
\end{eqnarray*}
taking $r=r'=1/2$, where $\preceq$ stands for the order over symmetric matrices and we used that $ -2 xv \leqslant \frac{1}{2}x^2 +  2 v^2 $ to bound the cross term. In other words, we have obtained that
\begin{multline*}
\frac{\dd}{\dd t} \int_{\R^d} \left|(\na_x+\na_v) \ln h_t \right|^2 \dd\rho_t \leqslant - \frac12 \int_{\R^d} \left|\na_x \ln h_t \right|^2 \dd\rho_t
 \\
 + \po 1+ 2M^2 + 2\frac{L^2}{\sigma^4} \pf  \int_{\R^d} \left|\na_v \ln h_t \right|^2 \dd\rho_t\,.    
\end{multline*}
Conclusion follows from \eqref{eq:dissipationVFP} (the definition of $a$ ensuring that the last term above is compensated by half the dissipation term of \eqref{eq:dissipationVFP}).
 \end{proof}
 As a corollary, assuming in the kinetic case the local non-linear LSI 
 \begin{equation}
 \label{eq:nonlinLSIkinetic}
\forall \rho\in\mathcal A\,,\qquad \mathcal F_{k}(\rho) - \mathcal F_{k}(\rho_*) \leqslant \oeta \sigma^4 \mathcal I(\rho|\Gamma_k(\rho))\tag{\textbf{k-NL-LSI}}
 \end{equation}
for some $\mathcal A\subset \mathcal P_2(\R^{2d})$,  $\oeta>0$ and $\rho_*\in\mathcal K_k$, which, using that $|(\na_x+\na_v) f|^2 \leqslant 2|\na f|^2$, in turn implies 
 \begin{equation*}
\forall \rho\in\mathcal A\,,\qquad \mathcal L(\rho) - \mathcal L(\rho_*) \leqslant \po \oeta +2a\pf \sigma^4 \mathcal I(\rho|\Gamma_k(\rho)) \,,
 \end{equation*}
we get an exponential decay of $\mathcal L(\rho_t)-\mathcal L(\rho_*)$ for times $t\leqslant T_{\mathcal A}(\rho_0)$. 

\begin{rem}\label{rem:elliptic_to_kinetic}
For $\rho \in \mathcal{P}_2(\R^{2d})$,
\begin{eqnarray*}
\mathcal F_k(\rho) & = & \sigma^2 \mathcal H \po \rho|\Gamma_k(\rho)\pf - \sigma^2 \ln \po Z_{\rho^x} (2\pi\sigma^2)^{d/2}\pf + \mathcal E(\rho^x) - \int_{\R^d} \rho^x E_{\rho^x} \\
& = & \sigma^2 \mathcal H \po \rho|\Gamma_k(\rho)\pf - \frac d2\sigma^2 \ln \po 2\pi\sigma^2\pf + \mathcal G\po \rho^x\pf 
\end{eqnarray*}
with
\[\mathcal G(\rho^x) = \mathcal E(\rho^x) - \int_{\R^d} \rho^x  E_{\rho^x} - \sigma^2 \ln Z_{\rho^x}\,,  \]
which is also such that, in the elliptic case, $\mathcal F(\rho^x) = \sigma^2 \mathcal H(\rho^x|\Gamma(\rho^x)) + \mathcal G(\rho^x)$. In particular, by the extensivity property of the relative entropy, $\mathcal F_k(\rho) \geqslant \mathcal F(\rho^x) - d\sigma^2/2 \ln (2\pi\sigma^2) $.

If $\Gamma(\rho^x)$ satisfies a LSI uniformly over $\rho$, so does $\Gamma_k(\rho)$ by tensorization of LSI, and then both \eqref{eq:nonlinLSIkinetic} and \eqref{eq:loc_LSInonlineaire} follow from a bound of the form $\mathcal G(\rho^x)-\mathcal G(\rho_*^x) \leqslant \oeta' \mathcal I(\rho^x|\Gamma(\rho^x)) $ for some $\oeta'>0$, as has been established in various situations in  Section~\ref{sec:LSIparametric}.
\end{rem}

In order to follow the argument that led in the elliptic case to Theorem~\ref{thm:main}, we need to provide an analogue of  Corollary~\ref{Cor:FW2} in the present hypoelliptic case. This is done by   using the Wasserstein-to-entropy short time regularization of \cite[Theorem 4.1]{QianRenWang} instead of Proposition~\ref{prop:Wang} and the entropy-to-Fisher short time regularization of \cite[Proposition 5.5]{Songbo2}, as we now detail.

The following is from \cite[Proposition 5.5]{Songbo2} (the proof is based on computations similar to Lemma~\ref{lem:dissipationVFP}).
\begin{prop}\label{prop:FisherToEntropy}
Assuming \eqref{eq:Lipshitz} and \eqref{eq:M}, there exists $\varepsilon\in(0,1]$ (which depends only on $M,L$ and $\sigma^2$) such that for any solution of \eqref{eq:VFP} with $\mathcal F_k(\rho_0) <\infty$, writing
\[ \mathcal E(t,\rho) := \mathcal F_{k}(\rho)  +  \varepsilon t   \int_{\R^{2d}} \co \left|\na_v  \ln \frac{\rho}{\Gamma_k(\rho)} \right|^2 +  \left|(\varepsilon t \na_x+  \na_v) \ln \frac{\rho}{\Gamma_k(\rho)} \right|^2  \cf \dd \rho\,,\]
then $t \mapsto \mathcal E(t,\rho_t)$ is non-increasing over $t\in[0,1]$.
\end{prop}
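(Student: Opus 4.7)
The strategy is the same Villani-Herau-Nier modified entropy method already carried out in Lemma~\ref{lem:dissipationVFP}, except that the twisting matrix is now allowed to depend on $t$ and to degenerate at $t=0$. Writing $h_t = \rho_t / \Gamma_k(\rho_t)$, the additional terms in $\mathcal{E}(t,\rho_t)$ can be regrouped as a single quadratic form $\int_{\R^{2d}} \nabla \ln h_t \cdot A(t) \nabla \ln h_t \, \dd \rho_t$, with the block matrix
\[
A(t) = \varepsilon t \begin{pmatrix} (\varepsilon t)^2 I_d & \varepsilon t I_d \\ \varepsilon t I_d & 2 I_d \end{pmatrix}.
\]
The matrix $A(t)$ vanishes at $t=0$, which is precisely what makes $\mathcal{E}(0,\rho_0) = \mathcal{F}_k(\rho_0) < \infty$ even when $\mathcal{I}(\rho_0|\Gamma_k(\rho_0)) = \infty$, while for $t>0$ one checks that $A(t)$ is positive definite of order $\varepsilon^3 t^3$ so that $\mathcal{E}(t,\rho_t)$ controls the full Fisher information $\mathcal{I}(\rho_t|\Gamma_k(\rho_t))$.

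The next step is to compute $\partial_t \mathcal{E}(t,\rho_t)$ term by term. The free energy piece contributes $-\sigma^4 \int |\nabla_v \ln h_t|^2 \, \dd\rho_t$ by Assumption~\ref{assu:general_kinetic}. For the twisted Fisher piece, I re-run the computation of \cite[Proposition~3]{NonSymVFP} that underlies Lemma~\ref{lem:dissipationVFP}, now with a time-dependent matrix. This yields two contributions: one from $\partial_t A(t)$, which feeds an explicit positive quadratic form in $|\nabla_v \ln h_t|^2$, $|\nabla_x \ln h_t|^2$ and cross terms with prefactors of order $\varepsilon$, $\varepsilon^2 t$, $\varepsilon^3 t^2$; and one from transporting $A(t) \nabla \ln h_t$ along the Vlasov-Fokker-Planck flow, which produces the commutator-type quadratic form $\int \nabla \ln h_t \cdot B(t) \nabla \ln h_t \, \dd \rho_t$. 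The matrix $B(t)$ involves the kinetic commutator $[\nabla_v, v\cdot \nabla_x] = \nabla_x$ and the interaction Hessian $\nabla^2 E_{\rho_t^x}$, both controlled by $M$ via~\eqref{eq:M}, together with a nonlinear correction coming from the $\rho_t$-dependence of $\Gamma_k(\rho_t)$ that is controlled by $L$ via~\eqref{eq:Lipshitz}.

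The task then reduces to a matrix inequality: showing that, summing all contributions, the resulting quadratic form in $(\nabla_x \ln h_t, \nabla_v \ln h_t)$ has non-positive symbol for all $t\in[0,1]$, provided $\varepsilon$ is chosen small enough in terms of $M$, $L$ and $\sigma^2$. The factor $2$ in the $(v,v)$-block of $A(t)$ is tuned precisely so that $\partial_t A(t)$ creates a strictly negative residual contribution of order $\varepsilon$ on $|\nabla_v \ln h_t|^2$, in addition to the $-\sigma^4$ already provided by $\partial_t \mathcal{F}_k(\rho_t)$; this residual is then used to absorb, via Young's inequality, the $\nabla_v$–$\nabla_x$ cross terms created by the transport. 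The $\varepsilon^3 t^3 |\nabla_x \ln h_t|^2$ diagonal term arising from the $(x,x)$-block is of the right order to dominate the remaining commutator contributions on the $x$-component. The hardest part of the bookkeeping is the nonlinear correction coming from $\partial_t \Gamma_k(\rho_t)$: using $\partial_t \rho_t^x = -\nabla_x \cdot \int v \rho_t \, \dd v$ together with~\eqref{eq:Lipshitz}, it produces additional terms of order at worst $L$ times the full Fisher information, which are also absorbable by the diagonal dissipation at the cost of further decreasing $\varepsilon$. Restricting to $t\in[0,1]$ keeps all prefactors $\varepsilon t$, $(\varepsilon t)^2$, $(\varepsilon t)^3$ bounded uniformly, so that a single choice of $\varepsilon=\varepsilon(M,L,\sigma^2)\in(0,1]$ makes $\partial_t \mathcal{E}(t,\rho_t) \leq 0$ on $[0,1]$, as claimed.
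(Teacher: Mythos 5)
The paper does not supply a proof of this proposition: it is imported verbatim from \cite[Proposition~5.5]{Songbo2}, with only the remark that it rests on computations of the same type as Lemma~\ref{lem:dissipationVFP}. Your sketch is therefore to be compared against that cited proof and against the spirit of Lemma~\ref{lem:dissipationVFP}, and at a structural level it is the right approach: the time-dependent twisting matrix $A(t)$, the split of $\partial_t\mathcal{E}(t,\rho_t)$ into the free-energy dissipation, the $\partial_t A(t)$ term, and the transported commutator term, the appeal to \eqref{eq:M} for the Hessian of $E_{\rho^x}$ and to \eqref{eq:Lipshitz} for the nonlinear correction coming from $\partial_t\Gamma_k(\rho_t)$, and finally the choice of $\varepsilon$ small to close the matrix inequality.

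There is, however, a concrete error in your bookkeeping. Writing $A(t)$ explicitly as $\begin{pmatrix}(\varepsilon t)^3 I_d & (\varepsilon t)^2 I_d\\ (\varepsilon t)^2 I_d & 2\varepsilon t I_d\end{pmatrix}$, one gets $\partial_t A(t) = \begin{pmatrix}3\varepsilon^3 t^2 I_d & 2\varepsilon^2 t I_d\\ 2\varepsilon^2 t I_d & 2\varepsilon I_d\end{pmatrix}$, whose $2\times 2$ symbol has determinant $2\varepsilon^4 t^2 \geq 0$ and positive trace. This matrix is positive semi-definite for all $t\in[0,1]$, so the $\partial_t A(t)$ contribution to $\partial_t\mathcal{E}(t,\rho_t)$ is nonnegative — it is part of what must be \emph{absorbed}, not a source of dissipation. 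Your claim that ``the factor $2$ in the $(v,v)$-block of $A(t)$ is tuned precisely so that $\partial_t A(t)$ creates a strictly negative residual contribution of order $\varepsilon$ on $|\nabla_v\ln h_t|^2$'' is incorrect on two counts: first, that factor $2$ is not a free design parameter but simply the coefficient produced automatically by expanding $|\nabla_v\ln h|^2 + |\varepsilon t\nabla_x\ln h + \nabla_v\ln h|^2 = (\varepsilon t)^2|\nabla_x\ln h|^2 + 2\varepsilon t\,\nabla_x\ln h\cdot\nabla_v\ln h + 2|\nabla_v\ln h|^2$; second, the negative diagonal contribution on $|\nabla_v\ln h_t|^2$ that drives the argument is the $-\sigma^4$ coming from the free-energy dissipation $\partial_t\mathcal{F}_k(\rho_t)$ (Assumption~\ref{assu:general_kinetic}), which is of order $1$, not of order $\varepsilon$; the $+2\varepsilon|\nabla_v\ln h_t|^2$ from $\partial_t A(t)$ is one of the positive terms this $-\sigma^4$ has to dominate. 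The proof still closes — since the positive contributions from $\partial_t A(t)$ and from the commutator are $O(\varepsilon)$ on the $(v,v)$-diagonal, while the dissipation is $O(1)$ there, and the $(x,x)$-dissipation generated by the hypocoercive cross-term is of the same order $\varepsilon^3 t^3$ as the $(x,x)$ contributions it must absorb — but the narrative you give for \emph{why} it closes misidentifies the source of the dissipation and should be corrected before this sketch could be considered a proof.
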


The following is a particular case  of \cite[Theorem 4.1]{QianRenWang}.

\begin{prop}\label{prop:W2ToEntropy_kin}
Assuming \eqref{eq:Lipshitz} and \eqref{eq:M}, there exists $K>0$ (which depends only on $M,L$ and $\sigma^2$) such that for any solution of \eqref{eq:VFP} with $\rho_0\in\mathcal P_2(\R^{2d})$ and any $\rho_*\in\mathcal K_k$ and $t\geqslant 0$,
\[ \mathcal H\po \rho_{t+1} |\rho_*\pf \leqslant K \mathcal W_2^2(\rho_t,\rho_*)\,.\]
\end{prop}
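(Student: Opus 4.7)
The statement is a mean-field, kinetic version of Wang's log-Harnack inequality and I would prove it by a coupling-plus-Girsanov argument in that spirit. Work with the nonlinear SDE associated with~\eqref{eq:VFP},
\[ dX_s = V_s\, ds, \qquad dV_s = -\bigl(V_s + \nabla E_{\rho_s^x}(X_s)\bigr) ds + \sqrt{2}\sigma\, dB_s, \]
for $s \in [t, t+1]$, so that $(X_s, V_s) \sim \rho_s$ when $(X_t, V_t) \sim \rho_t$. Pick an optimal $\mathcal{W}_2$-coupling of $(X_t, V_t) \sim \rho_t$ with $(\tilde X_t, \tilde V_t) \sim \rho_*$, and let $(\tilde X_s, \tilde V_s)$ be driven by the same Brownian motion, with drift based on the frozen measure $\rho_*$ so that marginally $\mathrm{Law}_{\mathbb{P}}(\tilde X_s, \tilde V_s) = \rho_*$.

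Since noise acts only on velocity, the two processes cannot be synchronized directly. Following Wang's kinetic log-Harnack construction, I would add to the velocity drift of $(\tilde X_s, \tilde V_s)$ a carefully chosen adapted control $u_s$, explicit in the linear kinetic Fokker-Planck setting, that forces $\tilde X_{t+1} = X_{t+1}$ and $\tilde V_{t+1} = V_{t+1}$ almost surely under a changed probability $\mathbb{Q} = Z\,\mathbb{P}$, with $Z$ the Girsanov density. The control can be arranged so that
\[ \int_t^{t+1} |u_s|^2\, ds \leq C\bigl(|X_t - \tilde X_t|^2 + |V_t - \tilde V_t|^2\bigr) + C\int_t^{t+1}\bigl|\nabla E_{\rho_s^x}(\tilde X_s) - \nabla E_{\rho_*^x}(\tilde X_s)\bigr|^2 ds. \]
By~\eqref{eq:Lipshitz}, the second integrand is bounded by $L^2 \mathcal{W}_2^2(\rho_s, \rho_*)$, and a synchronous coupling argument using \eqref{eq:Lipshitz}, \eqref{eq:M} and the damping term $-V$ gives $\mathcal{W}_2^2(\rho_s, \rho_*) \leq e^{C(s-t)} \mathcal{W}_2^2(\rho_t, \rho_*)$ on $[t, t+1]$, so the whole right-hand side is controlled by $C' (|X_t - \tilde X_t|^2 + |V_t - \tilde V_t|^2)$ in expectation.

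Under $\mathbb{P}$, $\mathrm{Law}(\tilde X_{t+1}, \tilde V_{t+1}) = \rho_*$; under $\mathbb{Q}$, the same random vector equals $(X_{t+1}, V_{t+1})$ almost surely and hence has law $\rho_{t+1}$. A standard entropy-versus-Girsanov computation then gives
\[ \mathcal{H}(\rho_{t+1} | \rho_*) \leq \mathbb{E}_{\mathbb{Q}}[\log Z] \leq \tfrac{1}{4\sigma^2}\,\mathbb{E}\!\left[\int_t^{t+1} |u_s|^2\, ds\right] \leq K\, \mathbb{E}\bigl[|X_t - \tilde X_t|^2 + |V_t - \tilde V_t|^2\bigr] = K\, \mathcal{W}_2^2(\rho_t, \rho_*), \]
with $K$ depending only on $L$, $M$, $\sigma^2$. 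The main obstacle is the explicit construction of $u_s$: the hypoellipticity of~\eqref{eq:VFP} (noise only on $v$) forces the control to steer both $\tilde X$ and $\tilde V$ simultaneously through the velocity channel alone, which is the heart of Wang's hypoelliptic log-Harnack inequality and the reason the time lag $1$ cannot be shrunk to $0$ while keeping $K$ finite.
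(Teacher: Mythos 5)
Your approach is essentially the same as the paper's, which simply defers to \cite[Theorem 4.1]{QianRenWang}; that theorem is precisely the coupling-by-change-of-measure (log-Harnack) bound for distribution-dependent kinetic SDEs that you sketch. One small caveat: your exposition swaps the roles of $\mathbb{P}$ and $\mathbb{Q}$ at one point (if the control $u_s$ is added to the $(\tilde X,\tilde V)$ dynamics under $\mathbb{P}$ so that $\tilde X_{t+1}=X_{t+1}$ a.s., then it is under $\mathbb{Q}$, not $\mathbb{P}$, that $(\tilde X_{t+1},\tilde V_{t+1})\sim\rho_*$, and the relevant entropy is $\mathcal H(\mathbb{P}|\mathbb{Q})=-\mathbb{E}_{\mathbb{P}}[\log Z]$ rather than $\mathbb{E}_{\mathbb{Q}}[\log Z]$), but the final Girsanov cost estimate and the use of~\eqref{eq:Lipshitz}, \eqref{eq:M} and a synchronous coupling to control $\mathcal W_2^2(\rho_s,\rho_*)$ on $[t,t+1]$ are correct and match the cited argument.
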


We can now state the analogue to Theorem~\ref{thm:main}.

\begin{assu}\label{assu:total_kinetic}
The conditions \eqref{eq:convexity-c_W2}, \eqref{eq:Lipshitz}, \eqref{eq:M} and \eqref{eq:LSIlineaire_locale}  hold for some $\lambda, L,M,\eta>0$, and \eqref{eq:nonlinLSIkinetic} holds on some non-empty set  $\mathcal A \subset \mathcal P_2(\R^{2d})$ for some $\oeta>0$. Furthermore  there exist $\rho_* \in \mathcal A\cap \mathcal K_k$ and $\delta>0$ such that $\mathcal B_{\mathcal W_2}(\rho_*,\delta) \subset \mathcal A$ and $\mathcal B_{\mathcal W_2}(\rho_*,\delta) \cap \mathcal K_k = \{\rho_*\}$. The function $\mathcal L$ is given by \eqref{eq:defLa} with $a=\frac12 \po 1+ 2M^2 + 2\frac{L^2}{\sigma^4} \pf ^{-1}$.
\end{assu}

\begin{thm}\label{thm:mainVFP}
Under Assumption~\ref{assu:total_kinetic}, there exist $\delta',c,C>0$ (which depend only on $\lambda$, $M$, $L$, $\delta$, $\eta$, $\oeta$ and $\sigma^2$)  such that, for all $\rho_0 \in \mathcal B_{\mathcal W_2}(\rho_*,\delta')$, $T_{\mathcal A}(\rho_0)=\infty$; for all $t\geqslant 0$,
\begin{eqnarray}
\mathcal W_2^2( \rho_t,\rho_*) & \leqslant & C e^{-ct} \mathcal W_2^2 (\rho_0,\rho_*) \label{eq:thmVFPW2}\\
\mathcal F_k(\rho_t)-\mathcal F_k(\rho_*) & \leqslant & C e^{-ct} \po \mathcal F_k(\rho_0)-\mathcal F_k(\rho_*)\pf  \label{eq:thmVFPF}\\
 \mathcal H(\rho_t|\rho_\infty) & \leqslant & C e^{-ct}  \mathcal H(\rho_0|\rho_\infty) \label{eq:thmVFPH}
\end{eqnarray}
and for all $t\geqslant 1$,
\begin{multline}\label{eq:thmVFPminmax}
 \max \po \mathcal W_2^2 (\rho_{\rev{t}},\rho_*) \, ,\,  \mathcal H(\rho_{\rev{t}}|\rho_\infty) \,,\, \mathcal F_k(\rho_{\rev{t}})-\mathcal F_k(\rho_*)\pf \\
  \leqslant    C e^{-ct} \min \po \mathcal W_2^2 (\rho_0,\rho_*) \, ,\,  \mathcal H(\rho_0|\rho_\infty) \,,\, \mathcal F_k(\rho_0)-\mathcal F_k(\rho_*)\pf\,.
\end{multline}
\end{thm}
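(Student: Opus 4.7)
The strategy adapts that of Theorem~\ref{thm:main} to the hypocoercive setting: the twisted Lyapunov functional $\mathcal{L}$ plays the role of the free energy, and the short-time regularization estimates of Propositions~\ref{prop:FisherToEntropy} and~\ref{prop:W2ToEntropy_kin} take the place of Corollary~\ref{Cor:FW2}. First, a short-time control of $\mathcal W_2(\rho_t,\rho_*)$ is obtained by a synchronous coupling of the stochastic characteristics
\[\dd X_t = V_t\,\dd t,\qquad \dd V_t = -V_t\,\dd t-\na E_{\rho_t^x}(X_t)\,\dd t+\sqrt{2}\sigma\,\dd B_t\]
against those driven by $\rho_*$: a Gronwall argument based on~\eqref{eq:Lipshitz} and~\eqref{eq:M} yields $\mathcal W_2(\rho_t,\rho_*)\leqslant e^{c_0 t}\mathcal W_2(\rho_0,\rho_*)$ for $t\leqslant\min(1,T_{\mathcal A}(\rho_0))$, with $c_0$ depending only on $L$ and $M$. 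Choosing $\delta'$ small enough ensures $T_{\mathcal A}(\rho_0)\geqslant 1$ and $\rho_1\in\mathcal B_{\mathcal W_2}(\rho_*,\delta/2)$.

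The next step is to chain the regularization results to bound $\mathcal L(\rho_1)-\mathcal L(\rho_*)$ by $\mathcal W_2^2(\rho_0,\rho_*)$: Proposition~\ref{prop:W2ToEntropy_kin} on $[0,1/3]$ gives $\mathcal H(\rho_{1/3}|\rho_*)\leqslant K\mathcal W_2^2(\rho_0,\rho_*)$; kinetic analogues of Lemmas~\ref{lem:sandwich} and~\ref{lem:Songbo}, which follow from the tensor product structure $\Gamma_k(\rho)=\Gamma(\rho^x)\otimes\mathcal N(0,\sigma^2 I_d)$ together with the tensorized LSI and Talagrand inequality implied by~\eqref{eq:LSIlineaire_locale}, turn this into $\mathcal F_k(\rho_{1/3})-\mathcal F_k(\rho_*)\leqslant C_1\mathcal W_2^2(\rho_0,\rho_*)$; Proposition~\ref{prop:FisherToEntropy} on $[1/3,1]$ then yields $\mathcal L(\rho_1)-\mathcal L(\rho_*)\leqslant C_2\mathcal W_2^2(\rho_0,\rho_*)$, where we have used that $\mathcal L(\rho_*)=\mathcal F_k(\rho_*)$ since the twisted term in~\eqref{eq:defLa} vanishes at $\rho_*=\Gamma_k(\rho_*)$.

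From~\eqref{eq:nonlinLSIkinetic} together with $|(\na_x+\na_v)f|^2\leqslant 2|\na f|^2$, one has $\mathcal L(\rho)-\mathcal L(\rho_*)\leqslant(\bar\eta+2a)\sigma^4\mathcal I(\rho|\Gamma_k(\rho))$ on $\mathcal A$; combined with Lemma~\ref{lem:dissipationVFP}, this gives the Gronwall inequality
\[\partial_t\left(\mathcal L(\rho_t)-\mathcal L(\rho_*)\right)\leqslant -\frac{\mathcal L(\rho_t)-\mathcal L(\rho_*)}{4(\bar\eta+2a)}\qquad\text{on }[1,T_{\mathcal A}(\rho_0)).\]
The kinetic sandwich $\sigma^2\mathcal H(\rho|\rho_*)\leqslant\mathcal F_k(\rho)-\mathcal F_k(\rho_*)+\lambda\mathcal W_2^2(\rho^x,\rho_*^x)$ combined with Talagrand for $\rho_*$ (via tensorized LSI, possibly after further shrinking $\delta$ to absorb $\lambda$ into the Talagrand constant) converts this $\mathcal L$-decay into an exponential contraction of $\mathcal W_2^2(\rho_t,\rho_*)$. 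A contradiction argument identical to the one in Theorem~\ref{thm:main} yields $T_{\mathcal A}(\rho_0)=+\infty$ and~\eqref{eq:thmVFPW2}--\eqref{eq:thmVFPH}; applying the whole regularization chain on any pair of times $(t,t+1)$ with $t\geqslant 0$ then produces the min/max equivalence~\eqref{eq:thmVFPminmax} at times $t\geqslant 1$.

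The main technical obstacle is the hypocoercive nature of~\eqref{eq:VFP}: the natural dissipation~\eqref{eq:dissipationVFP} controls only velocity derivatives, which forces the use of the twisted functional $\mathcal L$ throughout; moreover, unlike in the elliptic case there is no direct Benamou--Brenier bound of the form~\eqref{eq:lem-W2transport}, so the Wasserstein bootstrap has to be obtained indirectly by combining the $\mathcal L$-decay with the kinetic sandwich and the tensorized Talagrand inequality, rather than directly from the Lyapunov functional.
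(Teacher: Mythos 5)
You correctly identified the main structural ingredients — synchronous coupling for the short-time $\mathcal W_2$ bound, the regularization results (Propositions~\ref{prop:FisherToEntropy} and~\ref{prop:W2ToEntropy_kin}) as substitutes for Corollary~\ref{Cor:FW2}, the twisted functional $\mathcal L$ combined with Lemma~\ref{lem:dissipationVFP} and~\eqref{eq:nonlinLSIkinetic} to get exponential decay, and a contradiction argument for $T_{\mathcal A}(\rho_0)=\infty$. However, there is a genuine gap in the step that converts decay of $\mathcal L$ (or of $\mathcal F_k$) into a Wasserstein contraction, which is precisely the step needed to close the contradiction argument.

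You propose to combine the kinetic sandwich $\sigma^2\mathcal H(\rho|\rho_*)\leqslant\mathcal F_k(\rho)-\mathcal F_k(\rho_*)+\lambda\mathcal W_2^2(\rho,\rho_*)$ with the linear Talagrand inequality $\mathcal W_2^2(\rho,\rho_*)\leqslant 4\eta\mathcal H(\rho|\rho_*)$ for $\rho_*$, and you remark that one might need to shrink $\delta$ ``to absorb $\lambda$ into the Talagrand constant.'' This cannot work: chaining the two inequalities produces
\[
\po 1-\frac{4\eta\lambda}{\sigma^2}\pf \mathcal W_2^2(\rho,\rho_*)\ \leqslant\ \frac{4\eta}{\sigma^2}\po\mathcal F_k(\rho)-\mathcal F_k(\rho_*)\pf,
\]
which is vacuous as soon as $4\eta\lambda\geqslant\sigma^2$. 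The constants $\eta$ and $\lambda$ come from Assumption~\ref{assu:total_kinetic} and are not affected by shrinking the neighborhood $\delta$, so there is nothing to ``absorb.'' The condition $4\eta\lambda<\sigma^2$ is exactly the high-temperature/small-interaction regime that the paper's whole approach is designed to avoid, so this route defeats the purpose of the theorem.

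The way the paper avoids this smallness condition is to observe that the local non-linear Talagrand inequality~\eqref{eq:thmTalagrand} already proved in Theorem~\ref{thm:main} can be applied to the auxiliary \emph{elliptic} gradient flow on $\R^{2d}$ driven by the kinetic free energy $\mathcal F_k$ (noting that $\mathcal E_k$ is linear so~\eqref{eq:convexity-c_W2} transfers, and tensorization of LSI covers~\eqref{eq:LSIlineaire_locale} for $\Gamma_k$). This yields $\mathcal W_2^2(\rho,\rho_*)\leqslant 4\oeta\po\mathcal F_k(\rho)-\mathcal F_k(\rho_*)\pf$ locally around $\rho_*$, with no constraint relating $\eta$, $\lambda$ and $\sigma^2$. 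The rest of the paper's argument then proceeds much as you outline (using that $t\mapsto\mathcal F_k(\rho_t)$ is non-increasing together with Proposition~\ref{prop:W2ToEntropy_kin} to control $\mathcal W_2^2(\rho_t,\rho_*)$, then the contradiction argument, then the $\mathcal L$-decay). Fixing your proof therefore requires replacing the linear Talagrand step by this application of~\eqref{eq:thmTalagrand} to the auxiliary elliptic flow. A secondary, easily repairable issue: Proposition~\ref{prop:W2ToEntropy_kin} as stated has a fixed time lag of $1$, so using it ``on $[0,1/3]$'' requires re-deriving it with a different lag (and hence a different constant).
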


\begin{proof}
First, without loss of generality we can assume that \eqref{eq:LSIlineaire_locale} holds with $\eta\geqslant \sigma^2$, in which case the same inequality holds for all $\nu,\rho \in\mathcal P_2(\R^{2d})$ when $\Gamma$ is replaced by $\Gamma_k$, due to the LSI satisfied by $\mathcal N(0,\sigma^2 I_d)$ and the tensorization property of LSI. In particular, $\rho_*$ satisfies a LSI and thus a Talagrand inequality with constant $\eta$. Moreover, notice that \eqref{eq:convexity-c_W2} implies the same condition with $\mathcal E$ replaced by $\mathcal E + \mathcal E_k$, since $\mu\mapsto \mathcal E_k(\mu)$ is linear. In particular, in view of these two points, we can apply Lemmas~\ref{lem:sandwich} and \ref{lem:Songbo} in the kinetic case (namely with $\mathcal F$ replaced by $\mathcal F_k$,  $\Gamma$   by $\Gamma_k$,  $\rho_*\in\mathcal K_k$ and $\mu_0,\mu_1,\rho\in\mathcal P_2(\R^{2d})$). Using all these points, for any $\rho\in\mathcal P_2(\R^{2d})$,
\begin{eqnarray}
 \mathcal F_k(\rho) -  \mathcal F_k(\rho_*) &  \leqslant & \sigma^2 \mathcal H\po \rho|\Gamma_k(\rho)\pf+   \lambda \mathcal W_2^2(\rho,\rho_*) \nonumber \\
 & \leqslant & \sigma^2  \po 1+ L\eta +  \frac{4\lambda\eta}{\sigma^2} + \frac{L^2\eta^2 }{2}\pf \mathcal H(\rho|\rho_*)\,.\label{eq:demoTHM1}
\end{eqnarray} 
Similarly, we can apply Theorem~\ref{thm:main} to the elliptic equation on $\R^{2d}$
\[\partial_t \tilde \rho_t = \na\cdot \po \tilde \rho_t \na  \frac{\delta\mathcal{F}_k(\tilde \rho_t) }{\delta\mu} \pf\,, \]
from which \eqref{eq:thmTalagrand} reads 
\begin{equation}
\label{eq:demoTHMW2}
 \mathcal W_2^2(\rho,\rho_*) \leqslant 4\oeta \po \mathcal F_k(\rho) - \mathcal F_k(\rho_*)\pf
\end{equation}
for all $\rho \in \mathcal B_{\mathcal W_2}(\rho_*,\delta_1')$ for $\delta_1'\leqslant \delta $ small enough. Using that $t\mapsto \mathcal F_k(\rho_t)$ is non-increasing and Proposition~\ref{prop:W2ToEntropy_kin}, gathering the previous bounds gives that 
\[ \mathcal W_2^2(\rho_t,\rho_*)  \leqslant  4\oeta  \sigma^2  \po 1+ L\eta +  \frac{4\lambda\eta}{\sigma^2} + \frac{L^2\eta^2 }{2}\pf K  \mathcal W_2^2(\rho_0,\rho_*)\,. \]
for all $t\geqslant 1$ such that $\rho_t\in \mathcal B_{\mathcal W_2}(\rho_*,\delta_1')$.  Using a synchronous coupling as in the proof of Theorem~\ref{thm:main}, it is easily seen that
\begin{equation}
\label{eq:demoTHMW2synchro}
\mathcal W_2(\rho_t,\rho_*) \leqslant e^{Ct}\mathcal W_2(\rho_0,\rho_*) 
\end{equation}
for some $C$ depending only on $M$ and $L$. As a consequence, we can take $\delta'_2 \in(0, \delta_1']$ such that for all $\rho_0 \in \mathcal B_{\mathcal W_2}(\rho_*,\delta_2')$, it holds that $\rho_t \in \mathcal B_{\mathcal W_2}(\rho_*,\delta_1')$ for all $t\in[0,1]$ and 
\[ \mathcal W_2(\rho_t,\rho_*)  \leqslant \frac{\delta_1'}2 \]
for all $t\geqslant 1$ such that $\rho_t\in \mathcal B_{\mathcal W_2}(\rho_*,\delta_1')$. Using that $t\mapsto \rho_t$ is continuous with respect to $\mathcal W_2$, we get by contradiction that $\rho_t\in \mathcal B_{\mathcal W_2}(\rho_*,\delta_1')$ for all $t\geqslant 0$ if $\rho_0 \in \mathcal B_{\mathcal W_2}(\rho_*,\delta_2')$, and in particular $T_{\mathcal A}(\rho_0) = +\infty$.

In the remaining of the proof we consider an initial condition  $\rho_0 \in \mathcal B_{\mathcal W_2}(\rho_*,\delta_2')$. As we used in  \eqref{eq:rho*min}, \eqref{eq:nonlinLSIkinetic}  implies that $\rho_*$ is a minimizer of $\mathcal F_k$ over $\mathcal B_{\mathcal W_2}(\rho_*,\delta)$, so that $\mathcal F_k(\rho_t) \geqslant \mathcal F_k(\rho_*)$ for all $t\geqslant 0$.  Applying Proposition~\ref{prop:FisherToEntropy} (but with initial condition $\rho_{t}$ instead of $\rho_0$), we get that, for all $t\geqslant 0$,
\begin{eqnarray*}
\frac{\varepsilon^3}2  \mathcal I\po \rho_{t+1}|\Gamma(\rho_{t+1})\pf & \leqslant &  \mathcal E(1,\rho_{t+1}) - \mathcal F_k(\rho_{t+1}) \\
& \leqslant &  \mathcal E(0,\rho_t) - \mathcal F_k(\rho_*)  \ = \ \mathcal F_k(\rho_t) - \mathcal F_k(\rho_*)\,,
\end{eqnarray*}
and thus, using that $t\mapsto \mathcal F_k(\rho_t)$ is non-increasing, there exists $R>0$ such that
\[\mathcal L(\rho_{t+1} ) \leqslant R \po \mathcal F_k(\rho_t) - \mathcal F_k(\rho_*)\pf\]
for all $t\geqslant 0$. Since Lemma~\ref{lem:dissipationVFP} together with \eqref{eq:nonlinLSIkinetic} implies an exponential decay of $\mathcal L(\rho_{t} )$ at some rate $c>0$, we can bound, for $t\geqslant 1$,
\begin{eqnarray*}
\mathcal F_k(\rho_t) - \mathcal F_k(\rho_*) & \leqslant & \mathcal L(\rho_t) \\
& \leqslant & e^{-c(t-1)} \mathcal L(\rho_1)  \ \leqslant \ R e^{-c(t-1)}  \po \mathcal F_k(\rho_0) - \mathcal F_k(\rho_*)\pf\,.
\end{eqnarray*}
For $t\in[0,1]$ we can simply use that $t\mapsto \mathcal F_k(\rho_t)$ is non-increasing, and this concludes the proof of \eqref{eq:thmVFPF}. The results for $\mathcal W_2$ and the relative entropy then follow by combining \eqref{eq:thmVFPF} with the various bounds relating the different quantities. More specifically, \eqref{eq:thmVFPW2} is obtained simply by \eqref{eq:demoTHMW2synchro} for $t\in[0,1]$ and, for $t\geqslant 1$, by combining  \eqref{eq:demoTHMW2}, \eqref{eq:thmVFPF}, \eqref{eq:demoTHM1} and Proposition~\ref{prop:W2ToEntropy_kin} to bound
\[\mathcal W_2^2 (\rho_t,\rho_*) \leqslant 4\oeta  C e^{-c(t-1)} \po \mathcal F_k(\rho_1) - \mathcal F_k(\rho_*)\pf \leqslant  C' e^{-ct}\mathcal W_2^2 (\rho_0,\rho_*)  \]
for some $C'$. Similarly, \eqref{eq:thmVFPH} is obtained by using first \eqref{eq:Fmu0Fmu1>}, then \eqref{eq:thmVFPW2} and \eqref{eq:thmVFPF} and finally \eqref{eq:demoTHM1} and \eqref{eq:demoTHMW2} to bound
\begin{eqnarray*}
\sigma^2 \mathcal H(\rho_t |\rho_*) & \leqslant &  \mathcal F_k(\rho_t) - \mathcal F_k(\rho_*)  + \lambda \mathcal W_2^2 (\rho_t,\rho_*) \\
& \leqslant & C e^{-ct} \po \mathcal F_k(\rho_0) - \mathcal F_k(\rho_*)  + \lambda \mathcal W_2^2 (\rho_0,\rho_*)  \pf  \\
& \leqslant & C' e^{-ct} \mathcal H(\rho_0 |\rho_*)
\end{eqnarray*}
for some $C'$. The proof of \eqref{eq:thmVFPminmax} uses the same ingredients.
\end{proof}

\begin{Example}
In the one-dimensional double-well case, Proposition~\ref{prop:double-well} remains true if the granular media equation~\eqref{eq:granular_media} is replaced by the Vlasov-Fokker-Planck equation~\eqref{eq:VFP}. In the subcritical case, it directly follows from Theorem~\ref{thm:mainVFP}, thanks to Proposition~\ref{prop:doublepuit} and Remark~\ref{rem:elliptic_to_kinetic}. In the critical case, the arguments are as in Remark~\ref{rem:double-puit-degenere}.
\end{Example}

\section{Fast free energy decay for interacting particles}\label{sec:particules}

\subsection{Settings and results}

We consider the particle approximations of \eqref{eq:granular_media_1} and \eqref{eq:VFP}, namely the overdamped Langevin process $(X_t)_{t \geq 0}$ solving
\begin{eqnarray}
\label{eq:particules_overdamped}
\forall i\in\cco 1,N\ccf\,,\qquad \dd X_t^i = - \na E_{\pi(X_t)}(X_t^i)\dd t + \sqrt{2}\sigma \dd B_t^i
\end{eqnarray}
and the kinetic Langevin process $(Y_t,V_t)_{t \geq 0}$ solving
\begin{equation}
\label{eq:particules_kinetic}
\forall i\in\cco 1,N\ccf\,,\qquad
\left\{\begin{array}{rcl}
 \dd Y_t^i &=& V_t^i \dd t \\
 \dd V_t^i &= &  - \na E_{\pi(Y_t)}(Y_t^i)\dd t - V_t^i \dd t + \sqrt{2}\sigma \dd B_t^i \,,
\end{array}\right.
\end{equation}
where in both cases $B^1,\dots,B^N$ are independent $d$-dimensional Brownian motions and
\[\pi(x) = \frac1N\sum_{i=1}^N \delta_{x_i}\]
stands for the empirical distribution of $x=(x_1,\dots,x_N)\in\R^{dN}$. Under~\eqref{eq:Lipshitz} and, respectively, \eqref{eq:onesided} for~\eqref{eq:particules_overdamped} and~\eqref{eq:M} for~\eqref{eq:particules_kinetic}, both equations admit a unique global strong solution.

Considering first the overdamped case, denote by $\rho^N_t$ the law of $X_t=(X_t^1,\dots,X_t^N)$ solving \eqref{eq:particules_overdamped}.
Assuming that
\begin{equation}
    \label{eq:ZN<infty}
    \int_{\R^{dN}} \exp\left(-\frac{N}{\sigma^2}\mathcal{E}(\pi(x))\right) \dd x < \infty\,,
\end{equation}
we consider the Gibbs measure
\begin{equation*}
    \rho^N_\infty \propto \exp\left(-\frac{N}{\sigma^2}\mathcal{E}(\pi(x))\right)\dd x\,.
\end{equation*}
Using that
\begin{equation*}
    \forall i \in \cco 1, N\ccf, \qquad \nabla_{x_i} N\mathcal{E}(\pi(x)) = \nabla E_{\pi(x)}(x)\,,
\end{equation*}
by the properties of the linear derivative, we see that \eqref{eq:particules_overdamped} is an overdamped Langevin diffusion on $\R^{dN}$ with potential $ N\mathcal{E}(\pi(x))$ and temperature $\sigma^2$. Its invariant measure is $\rho_\infty^N$, and $t \mapsto \mathcal{H}(\rho^N_t|\rho^N_\infty)$ is decreasing (by Jensen inequality) and goes to zero as $t\rightarrow \infty$ under mild assumptions. Defining the $N$-particle  free energy of $\rho^N \in \mathcal P_2(\R^{dN})$ as 
\begin{equation*}
    \mathcal F^N(\rho^N) = \sigma^2\mathcal H(\rho^N)+ N \int_{\R^{dN}} \mathcal E(\pi(x)) \rho^N(\dd x)\,,
\end{equation*}
we see that
\begin{equation*}
    \sigma^2 \mathcal{H}(\rho^N|\rho^N_\infty) = \mathcal{F}^N(\rho^N)-\mathcal{F}^N(\rho^N_\infty).
\end{equation*}
In particular, $\rho_\infty^N$ is the global minimizer of $\mathcal F^N$ and $t \mapsto \mathcal{F}^N(\rho^N_t|\rho^N_\infty)$ is non-increasing.

As studied in \cite{Pavliotis}, when the non-linear dynamics \eqref{eq:granular_media_1} admits several stable stationary solutions, the log-Sobolev constant of $\rho_\infty^N$ (assuming that a LSI holds, for instance if $x\mapsto \mathcal E(\pi(x))$ is uniformly convex outside a compact set) goes to infinity   as $N\rightarrow \infty$, and thus we cannot expect a fast convergence of $\mathcal F^N(\rho_t^N)$ to its infimum (which typically occurs at a time-scale of order $e^{aN}$ for some $a>0$). However, in short time, initialized with independent initial conditions, the law of the particle system stays close to the solution of the non-linear problem~\eqref{eq:granular_media_1} and thus the local convergence of the latter to some critical point $\rho_*\in\mathcal K$ drives the initial behavior of the particle system. This gives the following.

\begin{prop}\label{prop:particules_overdamped}
Under Assumption~\ref{assu:total}, assume furthermore~\eqref{eq:ZN<infty} and that there exists $\lambda'\geqslant 0$ such that  for all $\mu_0,\mu_1\in\mathcal P_2(\R^d)$ and $t\in[0,1]$,
\begin{equation}
\label{eq:concavity} 
\mathcal E(t\mu_0 + (1-t) \mu_1) \geqslant t \mathcal E(\mu_0) +(1-t)\mathcal E(\mu_1) - t(1-t) \lambda' \mathcal W_2^2(\mu_0,\mu_1) \,.
\end{equation}
Let $\rho_0 \in\mathcal P_2(\R^d)$ be such that the solution of \eqref{eq:granular_media_1} converges to $\rho_*$ in long time. There exist $C,\beta>0$ such that, for all $N\in\N$ and $t\geqslant 1$,  considering the particle system \eqref{eq:particules_overdamped} with initial distribution $\rho_0^N = \rho_0^{\otimes N}$,
\[\frac1N \mathcal F^N(\rho_t^N) - \mathcal F(\rho_*) \leqslant C \po N^{-\beta} + e^{-t/\oeta}\pf \,. \]
\end{prop}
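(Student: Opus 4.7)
The plan is to combine the monotonicity of the $N$-particle free energy along~\eqref{eq:particules_overdamped} with a quantitative propagation-of-chaos estimate at an intermediate time $s\in[0,t]$ chosen logarithmically in $N$. First,~\eqref{eq:particules_overdamped} is the overdamped Langevin diffusion on $\R^{dN}$ with potential $U^N(x)=N\mathcal E(\pi(x))$, since the linear functional derivative property yields $\na_{x_i}U^N(x)=\na E_{\pi(x)}(x_i)$. Its invariant measure is $\rho^N_\infty$, and the identity $\mathcal F^N(\rho^N)=\mathcal F^N(\rho^N_\infty)+\sigma^2\mathcal H(\rho^N|\rho^N_\infty)$ together with the standard entropy dissipation for linear Fokker--Planck equations gives that $t\mapsto \mathcal F^N(\rho^N_t)$ is non-increasing. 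Hence for any $s\in[0,t]$, $\mathcal F^N(\rho^N_t)\leqslant \mathcal F^N(\rho^N_s)$, and the task reduces to establishing the pointwise comparison $\frac{1}{N}\mathcal F^N(\rho^N_s) \leqslant \mathcal F(\rho_s) + C\varepsilon(s,N)$ for a suitable error $\varepsilon(s,N)$.

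To obtain this, I would couple $(X^i_t)$ with i.i.d.\ copies $(\bar X^i_t)$ of the McKean--Vlasov SDE $\dd\bar X^i_t = -\na E_{\rho_t}(\bar X^i_t)\dd t + \sqrt2\sigma\dd B^i_t$ sharing the same Brownian motions and initial conditions, so that $\bar X^i_t\sim\rho_t$. Using~\eqref{eq:Lipshitz} and~\eqref{eq:onesided}, a Gr\"onwall argument on $\frac{1}{N}\sum_i|X^i_t-\bar X^i_t|^2$ together with the triangle inequality $\mathcal W_2^2(\pi(X_t),\rho_t)\leqslant \frac{2}{N}\sum_i|X^i_t-\bar X^i_t|^2+2\mathcal W_2^2(\pi(\bar X_t),\rho_t)$ and the Fournier--Guillin quantitative LLN $\mathbb E[\mathcal W_2^2(\pi(\bar X_s),\rho_s)]\leqslant CN^{-\beta_d}$ yield
\[\mathbb E[\mathcal W_2^2(\pi(X_s),\rho_s)] \leqslant \varepsilon(s,N):= C_1 e^{C_2 s}N^{-\beta_d}.\]
To convert this into the free energy comparison, I would treat the energy and entropy pieces separately. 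The two-sided Wasserstein curvature bounds~\eqref{eq:convexity-c_W2} and~\eqref{eq:concavity} show that $\mathcal E$ has ``bounded Hessian'' in $\mathcal W_2$, giving a Jensen-type estimate
\[\left|\mathbb E[\mathcal E(\pi(X_s))] - \mathcal E(\rho_s^{N,(1)})\right| \leqslant \frac{\lambda+\lambda'}{2}\mathbb E[\mathcal W_2^2(\pi(X_s),\rho_s^{N,(1)})],\]
where $\rho_s^{N,(1)}$ is the first marginal of $\rho^N_s$, while a Girsanov computation between the joint laws of the particle system and the tensorized McKean--Vlasov system on path space yields
\[\mathcal H\po \rho^N_s \mid \rho_s^{\otimes N}\pf \leqslant \frac{L^2 N}{4\sigma^2}\int_0^s \mathbb E[\mathcal W_2^2(\pi(X_r),\rho_r)]\dd r \leqslant C_3 e^{C_2 s}N^{1-\beta_d},\]
which, combined with the identity $\mathcal H(\rho^N_s)/N -\mathcal H(\rho_s)= \mathcal H(\rho^N_s|\rho_s^{\otimes N})/N + \int(\rho_s^{N,(1)}-\rho_s)\log\rho_s$ and the marginal propagation of chaos (controlled by $\mathcal H(\rho^{N,(1)}_s|\rho_s)\leqslant\mathcal H(\rho^N_s|\rho_s^{\otimes N})/N$), bounds $\sigma^2\mathcal H(\rho^N_s)/N - \sigma^2\mathcal H(\rho_s)$ by $C\varepsilon(s,N)$ up to harmless logarithmic factors coming from the tails of $\log\rho_s$.

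Combined with Remark~\ref{rem:Ainfini} applied to $\rho_0$ (which, thanks to the long-time convergence assumption, yields $\mathcal F(\rho_s)-\mathcal F(\rho_*)\leqslant C_0 e^{-s/\oeta}$ for $s\geqslant 1$), we obtain for all $s\in[1,t]$
\[\frac{1}{N}\mathcal F^N(\rho^N_t)-\mathcal F(\rho_*) \leqslant C_0 e^{-s/\oeta} + C\varepsilon(s,N).\]
Choosing $s=\min(t,\gamma\log N)$ with $\gamma\in(0,\beta_d/C_2)$ enforces $\varepsilon(s,N)\leqslant N^{-\beta'}$ for some $\beta'>0$, and balancing then yields the claimed bound $\frac{1}{N}\mathcal F^N(\rho^N_t)-\mathcal F(\rho_*)\leqslant C(N^{-\beta}+e^{-t/\oeta})$ with $\beta=\min(\beta',\gamma/\oeta)$. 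The main obstacle is the entropy comparison: the super-additivity inequality $\mathcal H(\rho^N_s)\geqslant N\mathcal H(\rho_s^{N,(1)})$ goes in the wrong direction, so $\mathcal W_2$-propagation of chaos alone does not give an upper bound on $\sigma^2\mathcal H(\rho^N_s)/N$, and the Girsanov bound above is what unlocks this. Because that bound grows exponentially in $s$, one cannot simply take $s=t$, and must instead optimize the choice of $s$ logarithmically in $N$—this is the precise mechanism responsible for the $N^{-\beta}$ error term.
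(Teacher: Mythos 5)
Your overall strategy — monotonicity of $\mathcal F^N$, propagation of chaos by synchronous coupling, a Girsanov entropy bound, and an intermediate time $s\sim\log N$ chosen to balance the exponential growth of coupling constants against the decay of the nonlinear free energy — matches the paper's, and your energy comparison via the two-sided $\mathcal W_2$-curvature bounds is sound.

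The genuine gap is in the entropy comparison. Having written
\begin{equation*}
\frac{1}{N}\mathcal H(\rho^N_s)-\mathcal H(\rho_s) = \frac{1}{N}\mathcal H(\rho^N_s|\rho_s^{\otimes N}) + \int_{\R^d}(\rho_s^{N,(1)}-\rho_s)\log\rho_s\,,
\end{equation*}
you control the first term by Girsanov but dismiss the second as carrying only ``harmless logarithmic factors.'' It is not harmless: in the settings at hand $\log\rho_s$ is unbounded and $\nabla\log\rho_s$ grows polynomially (like $|x|^3$ in the double-well case), so neither Pinsker nor a Talagrand-plus-Lipschitz argument closes the bound on $\int(\rho_s^{N,(1)}-\rho_s)\log\rho_s$, and doing so would require moment and weighted-Sobolev estimates on $\rho_s$ and $\rho_s^{N,(1)}$ that Assumption~\ref{assu:total} does not supply.

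The paper removes this obstruction by a sharper choice of reference measure: Lemma~\ref{lem:overdamped_particule} compares $\frac{1}{N}\mathcal F^N(\rho^N)$ directly to $\mathcal F(\rho_*)$ rather than to $\mathcal F(\rho_s)$. Because $\rho_*\in\mathcal K$ satisfies the self-consistency relation $\log\rho_*=-\sigma^{-2}E_{\rho_*}+\mathrm{const}$, the term $\int(\rho^{N,(1)}-\rho_*)\log\rho_*$ cancels \emph{exactly} against the linear part of the first-order expansion of $\mathcal E(\pi(\cdot))$ around $\rho_*$, leaving only the quadratic remainder controlled by~\eqref{eq:concavity}. This yields
\begin{equation*}
\mathcal F^N(\rho^N)-N\mathcal F(\rho_*) \leqslant \sigma^2\mathcal H\po\rho^N|\rho_*^{\otimes N}\pf + 2\lambda'\mathcal W_2^2\po\rho^N,\rho_*^{\otimes N}\pf + 2\lambda' N\alpha(N)\,,
\end{equation*}
with $\alpha(N)=\int\mathcal W_2^2(\pi(x),\rho_*)\rho_*^{\otimes N}(\dd x)$, after which Proposition~\ref{prop:W2entropyPoC} (the Girsanov bound now targeting $\rho_*^{\otimes N}$), synchronous-coupling propagation of chaos toward $\rho_*^{\otimes N}$ combined with \eqref{eq:convergence_temps-long}, the Fournier--Guillin bound on $\alpha(N)$, and your intermediate-time balancing close the argument. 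The fix is therefore to replace the $\rho_s$-reference entropy comparison by the $\rho_*$-reference one and exploit the self-consistency equation; no coupling to $\rho_s^{\otimes N}$ is needed.
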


Notice that, in view of \eqref{eq:courbureWa_kr_k}, \eqref{eq:concavity}  holds for instance in the case \eqref{eq:Wa_kr_k} if the functions $r_k$ are $L_k$-Lipschitz with $\sum_{k\in\N} L_k^2 <\infty$.

\begin{rem}
\label{rem:DelarueTse}
When the solution of the non-linear McKean-Vlasov equations converges to some $\rho_*$, under suitable  conditions (in particular of regularity), a quantitative weak convergence at rate $1/N$ for the empirical distribution of the corresponding system of $N$ interacting particles  is shown in 
\cite[Theorem 3.1]{delarue2021uniform} to hold over times of order $N^p$ for any $p$. This requires the initial condition to start close to $\rho_*$ in the total variation sense. However, thanks to Proposition~\ref{prop:Wang}, if the initial condition is close to $\rho_*$ in $\mathcal W_2$, then it becomes close in the total variation sense after a time $1$.
\end{rem}

The same occurs in the kinetic case. For $\rho^N \in \mathcal P_2(\R^{2dN})$, define the $N$-particle  kinetic free energy as 
\[\mathcal F^N_k(\rho^N) =  \sigma^2\mathcal H(\rho^N)  N \int_{\R^{2dN}} \co \mathcal E(\pi(x)) + \mathcal E_k(\pi(v))\cf \rho^N(\dd x\dd v)\,. \]

\begin{prop}\label{prop:particules_kinetic}
Under Assumption~\ref{assu:total_kinetic}, assume furthermore   that there exists $\lambda'\geqslant 0$ such that  \eqref{eq:concavity}  holds for all $\mu_0,\mu_1\in\mathcal P_2(\R^d)$ and $t\in[0,1]$. Let $\rho_0 \in\mathcal P_2(\R^{2d})$ be such that the solution of \eqref{eq:VFP} converges to $\rho_*$ in long time. There exist $C,\beta,\gamma>0$ such that, for all $N\in\N$ and $t\geqslant 1$,  considering the kinetic particle system \eqref{eq:particules_kinetic}  with initial distribution $\rho_0^N = \rho_0^{\otimes N}$,
\[\frac1N \mathcal F_k^N(\rho_t^N) - \mathcal F_k(\rho_*) \leqslant C \po N^{-\beta} + e^{-\gamma t}\pf \,. \]
\end{prop}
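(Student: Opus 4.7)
The argument will mirror the proof of Proposition~\ref{prop:particules_overdamped}, substituting Theorem~\ref{thm:mainVFP} for Theorem~\ref{thm:main} and invoking a propagation of chaos (POC) estimate for the kinetic Langevin particle system~\eqref{eq:particules_kinetic} in place of the overdamped one. Let $\bar\rho_t$ denote the solution of the nonlinear VFP equation~\eqref{eq:VFP} with initial condition $\rho_0$. The starting point is the splitting
\[\frac{1}{N}\mathcal F_k^N(\rho_t^N) - \mathcal F_k(\rho_*) = \left[\frac{1}{N}\mathcal F_k^N(\rho_t^N) - \mathcal F_k(\bar\rho_t)\right] + \left[\mathcal F_k(\bar\rho_t) - \mathcal F_k(\rho_*)\right].\]
The second bracket is handled by the kinetic analogue of Remark~\ref{rem:Ainfini}: by assumption $\bar\rho_t\to\rho_*$, hence $\bar\rho_t$ enters the $\mathcal W_2$-ball on which Theorem~\ref{thm:mainVFP} applies after some finite time $t_0$, and the theorem then yields $\mathcal F_k(\bar\rho_t) - \mathcal F_k(\rho_*) \leq C e^{-\gamma t}$ for all $t\geq 1$.

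For the first bracket, I will exploit the linearity of the kinetic energy in the one-particle marginal (so that $\int \mathcal E_k(\pi_v)\rho^N = \mathcal E_k(\rho^{N,(1)})$) together with the tensorisation identity $\mathcal H(\rho^N) = \mathcal H(\rho^N|\bar\rho_t^{\otimes N}) + N\int \log\bar\rho_t\,\rho^{N,(1)}$, to obtain the decomposition
\begin{align*}
\frac{1}{N}\mathcal F_k^N(\rho_t^N) - \mathcal F_k(\bar\rho_t)
&= \frac{\sigma^2}{N}\,\mathcal H(\rho_t^N|\bar\rho_t^{\otimes N}) + \sigma^2\int \log\bar\rho_t\,\dd(\rho_t^{N,(1)} - \bar\rho_t) \\
&\quad + \left[\int \mathcal E(\pi(x))\rho_t^N(\dd x\dd v) - \mathcal E(\bar\rho_t^x)\right] + \left[\mathcal E_k(\rho_t^{N,(1)}) - \mathcal E_k(\bar\rho_t)\right].
\end{align*}
Under~\eqref{eq:Lipshitz} and~\eqref{eq:M}, a relative-entropy POC bound of the form $\frac{1}{N}\mathcal H(\rho_t^N|\bar\rho_t^{\otimes N}) \leq C/N$ uniformly in $t$ can be obtained by adapting the kinetic arguments of~\cite{MONMARCHE20171721,guillin2021uniform,Songbo2,ren2021exponential}. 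The remaining three terms quantify the weak distance between $\rho_t^{N,(1)}$ and $\bar\rho_t$ and between $\pi(X_t)$ and $\bar\rho_t^x$; they are handled by standard $\mathcal W_2$-POC estimates combined with the semi-concavity~\eqref{eq:concavity} of $\mathcal E$, which, exactly as in the overdamped proof, turns the empirical fluctuation of $\mathcal E(\pi)$ around $\mathcal E(\bar\rho_t^x)$ into an $O(N^{-\beta})$ error.

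The main obstacle is obtaining the propagation of chaos estimate uniformly in time in the multi-well setting of Assumption~\ref{assu:total_kinetic}, since no global log-Sobolev inequality is available for the $N$-particle Gibbs measure $\rho_\infty^N$. The key idea is to combine a short-time POC bound (valid on any compact time interval, with constants that at worst grow exponentially from the Lipschitz assumptions) with the exponential stability around $\rho_*$ granted by Theorem~\ref{thm:mainVFP}: once $\bar\rho_t$ has entered the basin of $\rho_*$, one can propagate the POC estimate using the monotonicity of $\mathcal F_k^N(\rho_t^N)$ along the particle dynamics (this quantity equals $\sigma^2\mathcal H(\rho_t^N|\rho_\infty^N)$ up to an additive constant and is therefore non-increasing). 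Combining the uniform-in-time POC with Step~1 and the exponential mean-field decay then yields the announced bound $C(N^{-\beta}+e^{-\gamma t})$.
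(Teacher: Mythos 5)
Your high-level plan correctly identifies the three key ingredients (propagation of chaos, exponential mean-field convergence from Theorem~\ref{thm:mainVFP}, monotonicity of the $N$-particle free energy), but the way you combine them has a genuine gap. You propose to establish $\frac{1}{N}\mathcal H(\rho_t^N|\bar\rho_t^{\otimes N}) \leq C/N$ \emph{uniformly in time} by adapting the entropic propagation-of-chaos arguments of \cite{MONMARCHE20171721,guillin2021uniform,Songbo2,ren2021exponential}, but all of those results rely on structural conditions (uniform LSI for the $N$-particle Gibbs measure, smallness of the interaction, or functional convexity) that precisely rule out the multi-well setting covered by Assumption~\ref{assu:total_kinetic}. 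In fact the bound you want is simply false here: for fixed $N$, the particle system is metastable and eventually equilibrates to the Gibbs measure $\rho_\infty^N$, which is not close to $\rho_*^{\otimes N}$ (in the symmetric double-well case it charges both wells), so $\mathcal H(\rho_t^N|\bar\rho_t^{\otimes N})$ grows linearly in $N$ as $t\to\infty$. Your proposed remedy --- ``propagate the POC estimate using the monotonicity of $\mathcal F_k^N(\rho_t^N)$'' --- does not actually produce a uniform-in-time POC: the monotonicity applies to the $N$-particle free energy, not to the relative entropy between $\rho_t^N$ and $\bar\rho_t^{\otimes N}$, and you never explain how one yields the other.

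The paper avoids this pitfall by never attempting a uniform-in-time POC. Instead it couples the particle system to $\rho_*^{\otimes N}$ (not to $\bar\rho_t^{\otimes N}$) using a finite-time synchronous coupling estimate $\mathcal W_2^2(\rho_t^N,\rho_t^{\otimes N})\leq Ce^{Ct}$ together with $\mathcal W_2^2(\rho_t^{\otimes N},\rho_*^{\otimes N})\leq NC_0 e^{-t/\oeta}$ from Theorem~\ref{thm:mainVFP}; then it converts $\mathcal W_2$ at time $t$ to entropy at time $t+1$ via the short-time regularization of Proposition~\ref{prop:W2entropyPoC} (whose kinetic version is \cite[Lemma 5.3]{Songbo2}); then it converts entropy to free energy via Lemma~\ref{lem:overdamped_particule}, which also carries over to the kinetic case because the kinetic energy is linear in the measure and thus trivially satisfies~\eqref{eq:concavity}. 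The exponentially growing constant $e^{Ct}$ is harmless because $t\mapsto \mathcal F_k^N(\rho_t^N)$ is non-increasing: for $t\geq \tfrac{\ln N}{2C}$ one simply evaluates the bound at $t_0 = \tfrac{\ln N}{2C}$, which yields $N^{-\beta}$. You would do well to rework your proof along these lines: compare to $\rho_*^{\otimes N}$ directly, use finite-time Wasserstein POC rather than entropic POC, route the Wasserstein control through the short-time regularization, and apply the free-energy monotonicity only at the very last step (on $\mathcal F_k^N$, with the explicit time threshold $\ln N/(2C)$).
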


\subsection{Proofs}

We mostly focus on the overdamped case in this section, denoting by $\rho_t^N$ the law of \eqref{eq:particules_overdamped}. We start with the following variation of Proposition~\ref{prop:Wang}.

\begin{prop}\label{prop:W2entropyPoC}
For any $\rho_* \in \mathcal K$, there exists $C>0$ such that for all $N\in\N$, $t\geqslant 0$ and any initial distribution $\rho_0^N \in\mathcal P_2(\R^{dN})$, 
\[\mathcal H\po \rho_{t+1}^N | \rho_{*}^{\otimes N} \pf \leqslant C \mathcal W_2^2\po \rho_t^N,\rho_*^{\otimes N}\pf + C \,.\]
\end{prop}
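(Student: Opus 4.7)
The plan is to view the particle system~\eqref{eq:particules_overdamped} as a standard (non-mean-field) It\^o diffusion on $\R^{dN}$. Since the functional-derivative identity yields $\nabla_{x_i}[N\mathcal{E}(\pi(x))] = \nabla E_{\pi(x)}(x_i)$, the particle drift reads $b^N(x) = -(\nabla E_{\pi(x)}(x_i))_{i=1}^N$, and one can apply Proposition~\ref{prop:Wang} in the degenerate case where the McKean-Vlasov drift does not depend on the measure ($\kappa_2=0$), provided its hypotheses are verified with constants uniform in $N$.

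The first step is to check a one-sided Lipschitz condition for $b^N$ on $\R^{dN}$ uniformly in~$N$. One splits
\begin{align*}
-2\po b^N(x)-b^N(y)\pf\cdot(x-y) = {} & 2\sum_i\po \nabla E_{\pi(x)}(x_i)-\nabla E_{\pi(x)}(y_i)\pf\cdot(x_i-y_i) \\
& {}+ 2\sum_i\po \nabla E_{\pi(x)}(y_i)-\nabla E_{\pi(y)}(y_i)\pf\cdot(x_i-y_i)\,,
\end{align*}
the first sum being bounded by $\kappa_1|x-y|^2$ thanks to~\eqref{eq:onesided}, and the second by $2L\mathcal W_2(\pi(x),\pi(y))\sum_i|x_i-y_i|\leqslant 2L|x-y|^2$ via~\eqref{eq:Lipshitz}, the pairing bound $\mathcal W_2(\pi(x),\pi(y))\leqslant |x-y|/\sqrt N$ and Cauchy-Schwarz. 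This gives a one-sided Lipschitz constant $\kappa_1+2L$ independent of~$N$; the growth condition of Proposition~\ref{prop:Wang} on $|\nabla E_\mu(0)|$ is also easily seen to be $N$-uniform once one reads $\int |x|^2\mu(\dd x)$ on $\R^{dN}$. Applying Proposition~\ref{prop:Wang} to the particle semigroup $(P_s^N)_{s\geqslant 0}$ with $\mu_0=\rho_t^N$ and $\nu_0=\rho_*^{\otimes N}$ at $s=1$ thus yields
\[\mathcal H\po \rho_{t+1}^N\mid\eta_{t+1}^N\pf \leqslant C_1\,\mathcal W_2^2\po\rho_t^N,\rho_*^{\otimes N}\pf\,,\]
where $\eta_{t+1}^N := \rho_*^{\otimes N}P_1^N$ is the push-forward of $\rho_*^{\otimes N}$ along the particle flow, and $C_1$ depends only on $\kappa_1$, $L$ and $\sigma^2$.

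It remains to trade the reference $\eta_{t+1}^N$ for $\rho_*^{\otimes N}$. Using the identity $\mathcal H(\rho_{t+1}^N\mid\rho_*^{\otimes N})=\mathcal H(\rho_{t+1}^N\mid\eta_{t+1}^N)+\int\rho_{t+1}^N\log(\dd\eta_{t+1}^N/\dd\rho_*^{\otimes N})$ combined with Donsker-Varadhan's variational inequality applied to the cross term, one obtains
\[\mathcal H\po\rho_{t+1}^N\mid\rho_*^{\otimes N}\pf \leqslant 2\,\mathcal H\po\rho_{t+1}^N\mid\eta_{t+1}^N\pf + \log\po 1+\chi^2(\eta_{t+1}^N\mid\rho_*^{\otimes N})\pf\,.\]
The main obstacle is then to bound $\chi^2(\eta_{t+1}^N\mid\rho_*^{\otimes N})$ uniformly in $N$. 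This should follow from a Girsanov comparison between the particle semigroup $P^N$ and the product semigroup $Q^{\otimes N}$ of $N$ independent linearized single-particle diffusions $\dd Y=-\nabla E_{\rho_*}(Y)\dd t+\sqrt 2\sigma\dd B$, noting that $\rho_*^{\otimes N}$ is exactly invariant under $Q^{\otimes N}$: the exponent of the squared Girsanov density is controlled by $\sigma^{-2}\int_0^1\sum_i|\nabla E_{\pi(X_s)}(X_s^i)-\nabla E_{\rho_*}(X_s^i)|^2\dd s \lesssim N\sigma^{-2}\int_0^1\mathcal W_2^2(\pi(X_s),\rho_*)\dd s$ by~\eqref{eq:Lipshitz}, and short-time stability of the particle system started at $\rho_*^{\otimes N}$ (combined with empirical-measure moment estimates against $\rho_*$) should keep the expectation of this quantity $O(1)$ uniformly in $N$, which is the technical heart of the argument.
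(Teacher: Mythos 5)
Your overall skeleton is sound and matches the Girsanov/Harnack spirit of the paper's one-line proof: apply Wang's $\mathcal{W}_2$-to-entropy regularization (Proposition~\ref{prop:Wang} with $\kappa_2=0$) to the lifted $N$-particle diffusion, which legitimately gives $\mathcal H(\rho_{t+1}^N\mid\eta_{t+1}^N)\leqslant C_1\mathcal W_2^2(\rho_t^N,\rho_*^{\otimes N})$ with $N$-uniform $C_1$ thanks to your correct verification that $b^N(x)=(-\nabla E_{\pi(x)}(x_i))_i$ is one-sided Lipschitz with constant $\kappa_1+2L$ independent of $N$; then trade $\eta_{t+1}^N:=\rho_*^{\otimes N}P_1^N$ for $\rho_*^{\otimes N}$ via Donsker--Varadhan and a Girsanov comparison with the tensorized linearized dynamics. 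Steps 1 and 2 are fine.

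The gap is in Step 3, and it is genuine. You control the Girsanov integrand by $\sum_i|\nabla E_{\pi(X_s)}(X_s^i)-\nabla E_{\rho_*}(X_s^i)|^2\leqslant L^2N\,\mathcal W_2^2(\pi(X_s),\rho_*)$ via~\eqref{eq:Lipshitz} and assert that empirical-measure moment estimates keep this $O(1)$. They do not: with $X$ approximately i.i.d.\ $\rho_*$, Fournier--Guillin type estimates give $\mathbb E\,\mathcal W_2^2(\pi(X),\rho_*)$ of order $N^{-\min(1/2,\,2/d)}$ up to logarithms, so after multiplying by $N$ the first moment of your Girsanov exponent already diverges for every $d\geqslant 2$, and the exponential moment that $\chi^2(\eta_{t+1}^N\mid\rho_*^{\otimes N})$ requires is out of reach a fortiori. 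The $\mathcal W_2$-Lipschitz bound~\eqref{eq:Lipshitz} is simply the wrong estimate here: what makes the chaos error $O(1)$ uniformly in $N$ is the specific mean-field form of the drift (for granular media, $\nabla E_\mu(x)=\nabla V(x)+\int\nabla_xW(x,y)\mu(\dd y)$; in the parametric case, dependence only through $\varphi(\mu)$), under which each $\nabla E_{\pi(X)}(X^i)-\nabla E_{\rho_*}(X^i)$ is a centered empirical average with variance of order $1/N$ — CLT scaling, not $\mathcal W_2$ scaling — so that $\sum_i|\cdot|^2=O(1)$, with the needed exponential moments coming from Hoeffding/Bernstein concentration of this empirical average rather than from Wasserstein concentration of $\pi(X)$. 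This finer estimate is what the paper delegates to \cite[Theorem 2.3]{Huang} and \cite[Lemma 5.3]{Songbo2}; your sketch replaces that technical heart by a bound that cannot give the claimed $N$-uniformity of the additive constant.
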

Similarly  to Proposition~\ref{prop:Wang}, this follows from a Girsanov transform, see \cite[Theorem 2.3]{Huang} or \cite[Lemma 5.3]{Songbo2}.

To relate $\mathcal H\po \rho_{t}^N | \rho_{*}^{\otimes N} \pf$ to $\mathcal{F}^N(\rho_t^N)$, we can rely on the following.

\begin{lem}\label{lem:overdamped_particule}
Assume that there exists $\lambda'\geqslant 0$ such that \eqref{eq:concavity} holds for all $\mu_0,\mu_1\in\mathcal P_2(\R^d)$ and $t\in[0,1]$. Then, for all $\rho^N \in \mathcal P_2(\R^{dN})$ and $\rho_*\in\mathcal K$, 
\[\mathcal F^N(\rho^N) - N \mathcal F(\rho_*) \leqslant    \sigma^2 \mathcal H\po \rho^N|\rho_*^{\otimes N}\pf + 2 \lambda'\mathcal W_2^2 \po \rho^N,\rho^{\otimes N}_*\pf + 2\lambda' N \alpha(N) \]
where
\[\alpha(N) = \int_{\R^{dN}}    \mathcal W_2^2(\pi(x), \rho_*)   \rho_*^{\otimes N}(\dd x)\,.\]
\end{lem}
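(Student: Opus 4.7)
The plan is to decompose the free energy difference into an entropic term and an energetic term, then control each separately. First I would use the self-consistency equation $\rho_* \propto \exp(-E_{\rho_*}/\sigma^2)$ to rewrite
\begin{equation*}
\sigma^2 \mathcal{H}(\rho^N) = \sigma^2 \mathcal{H}(\rho^N|\rho_*^{\otimes N}) - N \int \left[\int E_{\rho_*} \, \mathrm{d}\pi(x)\right] \rho^N(\mathrm{d}x) - N\sigma^2 \ln Z_{\rho_*}
\end{equation*}
and similarly $\sigma^2 \mathcal{H}(\rho_*) = -\int E_{\rho_*}\rho_* - \sigma^2 \ln Z_{\rho_*}$. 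Substituting these into the expressions for $\mathcal{F}^N(\rho^N)$ and $N\mathcal{F}(\rho_*)$, the $\ln Z_{\rho_*}$ terms cancel and one is left with
\begin{equation*}
\mathcal{F}^N(\rho^N) - N\mathcal{F}(\rho_*) = \sigma^2 \mathcal{H}(\rho^N|\rho_*^{\otimes N}) + N \int \left[\mathcal{E}(\pi(x)) - \int E_{\rho_*}\,\mathrm{d}\pi(x) - \mathcal{E}(\rho_*) + \int E_{\rho_*}\rho_*\right] \rho^N(\mathrm{d}x).
\end{equation*}

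The second step is to control the bracket using the concavity-type hypothesis \eqref{eq:concavity}. Dividing by $t$ and letting $t\to 0$ in \eqref{eq:concavity} (using~\eqref{eq:cv-lfdE}, exactly as in the proof of Lemma~\ref{lem:sandwich}) yields the one-sided functional-derivative bound
\begin{equation*}
\mathcal{E}(\mu_0) \leqslant \mathcal{E}(\mu_1) + \int E_{\mu_1}(\mu_0 - \mu_1) + \lambda' \mathcal{W}_2^2(\mu_0,\mu_1).
\end{equation*}
Applied with $\mu_0 = \pi(x)$ and $\mu_1 = \rho_*$, this directly shows that the bracket is bounded above by $\lambda' \mathcal{W}_2^2(\pi(x), \rho_*)$.

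The last step is to control $\int \mathcal{W}_2^2(\pi(x), \rho_*)\,\rho^N(\mathrm{d}x)$ in terms of the \emph{joint} Wasserstein distance $\mathcal{W}_2^2(\rho^N, \rho_*^{\otimes N})$ and the deterministic error $\alpha(N)$. For any coupling $(X,Y)$ of $\rho^N$ and $\rho_*^{\otimes N}$, the triangle inequality and the elementary bound $\mathcal{W}_2^2(\pi(x),\pi(y)) \leqslant \frac{1}{N}|x-y|^2$ give
\begin{equation*}
\mathcal{W}_2^2(\pi(X), \rho_*) \leqslant \frac{2}{N}|X-Y|^2 + 2\mathcal{W}_2^2(\pi(Y), \rho_*).
\end{equation*}
Taking expectation and optimizing over couplings bounds $\int \mathcal{W}_2^2(\pi(x),\rho_*)\rho^N(\mathrm{d}x)$ by $\frac{2}{N}\mathcal{W}_2^2(\rho^N,\rho_*^{\otimes N}) + 2\alpha(N)$; multiplying by $N\lambda'$ yields the stated inequality.

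The main obstacle is the bookkeeping in the first step (organizing the cancellations of $\ln Z_{\rho_*}$ and of $\int E_{\rho_*}\rho_*$); beyond that everything reduces to two short ingredients, the $\lambda'$-semiconcavity of $\mathcal{E}$ and the standard subadditivity $\mathcal{W}_2^2(\pi(x),\pi(y))\leqslant |x-y|^2/N$ of the empirical distribution map.
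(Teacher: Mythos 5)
Your proof is correct and is essentially the same as the paper's: both use the one-sided linearization of~\eqref{eq:concavity} at $\mu_1=\rho_*$, the self-consistency identity $E_{\rho_*}=-\sigma^2(\ln\rho_*+\ln Z_{\rho_*})$ to turn the linear term into $\sigma^2\mathcal{H}(\rho^N|\rho_*^{\otimes N})$, and the coupling bound $\mathcal{W}_2^2(\pi(x),\pi(y))\leqslant\frac1N\sum_i|x_i-y_i|^2$ together with a factor-2 triangle inequality and $\alpha(N)$. The only difference is the order of presentation (you expand the entropy first and then invoke semiconcavity; the paper does the reverse), which is purely cosmetic.
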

Thanks to \cite[Theorem 1]{FournierGuillin}, $\alpha(N) \leqslant C N^{-2/d}$ for some $C$ (but possibly for some particular $\rho_*$ it may vanish faster).

 \begin{proof}
   Dividing by $t$ and sending $t$ to zero in \eqref{eq:concavity} yields
\[\int_{\R^d} E_{\mu_1} (\mu_0-\mu_1) + \mathcal E(\mu_1) \geqslant  \mathcal E(\mu_0)  -    \lambda \mathcal W_2^2(\mu_0,\mu_1)\,.
\]
Then, using this with $\mu_0 = \pi(x)$ and $\mu_1=\rho_*$,
\begin{eqnarray*}
       \lefteqn{ \mathcal F^N(\rho^N) - N \mathcal F(\rho_*) }\\
        & = & N \int_{\R^{dN}} \co \mathcal E(\pi(x)) - \mathcal E(\rho_*)\cf \rho^N(\dd x) + \sigma^2\mathcal H(\rho^N) - N \sigma^2\mathcal H(\rho_*) \\
    & \leqslant & N \int_{\R^{dN}} \co   \lambda' \mathcal W_2^2(\pi(x),\rho_*) + \int_{\R^{2d}}E_{\rho_*}(y) (\pi(x)-\rho_*)(\dd y)   \cf \rho^N(\dd x) + \sigma^2\mathcal H(\rho^N) - N\sigma^2 \mathcal H(\rho_*).
\end{eqnarray*}
Using that $E_{\rho_*} = -\sigma^2(\ln \rho_* + \ln Z_{\rho_*})$, 
\begin{eqnarray*}
N \int_{\R^{dN}}      \int_{\R^{2d}}E_{\rho_*}(y) (\pi(x)-\rho_*)(\dd y)     \rho^N(\dd x) & = & N \sigma^2 \mathcal H(\rho_*) - \sigma^2\sum_{i=1}^N \int_{\R^{dN}}      \ln\rho_*(x_i)   \rho^N(\dd x) \\
 & = & N \sigma^2\mathcal H(\rho_*) - \sigma^2 \int_{\R^{dN}}      \ln\rho_*^{\otimes N}(x)   \rho^N(\dd x) \,.
\end{eqnarray*}
Plugging this in the previous inequality reads 
\[ \mathcal F^N(\rho^N) - N \mathcal F(\rho_*) \leqslant \sigma^2\mathcal H\po \rho_N|\rho_*^{\otimes N}\pf +  N \lambda'   \int_{\R^{dN}}    \mathcal W_2^2(\pi(x),\rho_*)   \rho^N(\dd x) \,.\]
Let $\mu(\dd x\dd y)$ be an optimal $\mathcal W_2$ coupling of $\rho^N$ and $\rho_*^{\otimes N}$. We bound
\begin{eqnarray*}
\int_{\R^{dN}}    \mathcal W_2^2(\pi(x),\rho_*)   \rho^N(\dd x) & \leqslant & 2 \int_{\R^{dN}}    \mathcal W_2^2(\pi(x),\pi(y))   \mu(\dd x\dd y) + 2 \alpha(N)  \\
 & \leqslant & \frac{2}{N}\sum_{i=1}^N \int_{\R^{dN}}    |x_i-y_i|^2  \mu(\dd x\dd y) + 2 \alpha(N)  \\
 &= & 2 \mathcal W_2^2 \po \rho^N,\rho^{\otimes N}_*\pf  + 2 \alpha(N)\,,
\end{eqnarray*}
which completes the proof.
\end{proof}

\begin{proof}[Proof of Proposition~\ref{prop:particules_overdamped}]
Thanks to \eqref{eq:convergence_temps-long} and standard finite propagation of chaos estimates (obtained by synchronous coupling), we bound 
\begin{eqnarray*}
\mathcal W_2^2\po \rho_t^N,\rho_*^{\otimes N}\pf &  \leqslant &  2 \mathcal W_2^2\po \rho_t^N,\rho_t^{\otimes N}\pf + 2 \mathcal W_2^2\po \rho_t^{\otimes N},\rho_*^{\otimes N}\pf \\
& \leqslant & C e^{Ct}  + 2 N C_0 e^{-t/\oeta}
\end{eqnarray*}
for some $C>0$. Proposition~\ref{prop:W2entropyPoC} and Lemma~\ref{lem:overdamped_particule} yield
  \[\mathcal F^N(\rho_t^N) - N \mathcal F(\rho_*)  \leqslant C' \po  e^{Ct}  + N  e^{-t/\oeta} + N \alpha(N)\pf \]
for all $t\geqslant 1$ for some $C'>0$. Conclusion follows by using that $t \mapsto \mathcal F^N(\rho_t^N)$ is non-decreasing (distinguishing whether $t$ is above or under $\ln N /(2C)$) and applying \cite[Theorem 1]{FournierGuillin} to bound $\alpha(N)$.
\end{proof}

\begin{proof}[Proof of Proposition~\ref{prop:particules_kinetic}]
The proof is the same as the proof of  Proposition~\ref{prop:particules_overdamped}. Proposition~\ref{prop:W2entropyPoC} still holds in the kinetic case (with $\rho_*\in\mathcal K_k$; this is \cite[Lemma 5.3]{Songbo2}), and so does Lemma~\ref{lem:overdamped_particule} (replacing $\mathcal F$ by $\mathcal F_k$, with the same proof, noticing that the kinetic energy is linear and thus satisfies \eqref{eq:concavity} with $\lambda'=0$). We use Theorem~\ref{thm:mainVFP} instead of Theorem~\ref{thm:main}.
\end{proof}

\subsection*{Acknowledgements}

The research of P. Monmarch\'e is supported by the projects SWIDIMS (ANR-20-CE40-0022) and CONVIVIALITY (ANR-23-CE40-0003) of the French National Research Agency. The research of J. Reygner is supported by the projects QUAMPROCS (ANR-19-CE40-0010) and CONVIVIALITY (ANR-23-CE40-0003) of the French National Research Agency.

\end{document}